\theoremstyle{definition}
\newtheorem{defn}{Definition}[section]
\newtheorem{thm}{Theorem}[section]
\newtheorem*{thm*}{Theorem}
\newtheorem{rem}{Remark}[section]
\newtheorem{coro}{Corollary}[section]
\newtheorem{lem}{Lemma}[section]
\newtheorem{prop}{Proposition}[section]
\newcounter{cnt}
\def\mydggeometry{\makeatletter\dg@YGRID=1\dg@XGRID=20\unitlength=0.003pt\makeatother}
\makeatother \theoremstyle{remark}
\numberwithin{equation}{section}
\let\bwdg\bigwedge
\def\bigwedge{{\textstyle\bwdg}}
\newcommand{\wt}{\operatorname{wt}}
\newcommand{\nc}{\newcommand}
\newcommand{\rnc}{\renewcommand}
\nc{\cal}{\mathcal} \nc{\goth}{\mathfrak} \rnc{\bold}{\mathbf}
\renewcommand{\Bbb}{\mathbb}
\nc\bomega{{\mbox{\boldmath $\omega$}}} \nc\bpsi{{\mbox{\boldmath $\Psi$}}}
 \nc\balpha{{\mbox{\boldmath $\alpha$}}}
 \nc\bpi{{\mbox{\boldmath $\pi$}}}
\nc\bmu{{\mbox{\boldmath $\mu$}}} \nc\bcN{{\mbox{\boldmath $\cal{N}$}}} \nc\bcm{{\mbox{\boldmath $\cal{M}$}}} \nc\blambda{{\mbox{\boldmath
$\lambda$}}}\nc\bnu{{\mbox{\boldmath $\nu$}}}
\newcommand{\lie}[1]{\mathfrak{#1}}
\def\section{\def\@secnumfont{\mdseries}\@startsection{section}{1}%
  \z@{.7\linespacing\@plus\linespacing}{.5\linespacing}%
  {\normalfont\scshape\centering}}
\def\subsection{\def\@secnumfont{\bfseries}\@startsection{subsection}{2}%
  {\parindent}{.5\linespacing\@plus.7\linespacing}{-.5em}%
  {\normalfont\bfseries}}
 \nc{\Hom}{\operatorname{Hom}}
 \nc{\Ind}{\operatorname{Ind}}
\nc{\N}{{\Bbb N}} \nc\boa{\bold a} \nc\bob{\bold b} \nc\boc{\bold c} \nc\bod{\bold d} \nc\boe{\bold e} \nc\bof{\bold f} \nc\bog{\bold g}
\nc\boh{\bold h} \nc\boi{\bold i} \nc\boj{\bold j} \nc\bok{\bold k} \nc\bol{\bold l} \nc\bom{\bold m} \nc\bon{\bold n} \nc\boo{\bold o}
\nc\bop{\bold p} \nc\boq{\bold q} \nc\bor{\bold r} \nc\bos{\bold s} \nc\boT{\bold t} \nc\boF{\bold F} \nc\bou{\bold u} \nc\bov{\bold v}
\nc\bow{\bold w} \nc\boz{\bold z} \nc\boy{\bold y} \nc\ba{\bold A} \nc\bb{\bold B} \nc\bc{\bold C} \nc\bd{\bold D} \nc\be{\bold E} \nc\bg{\bold
G} \nc\bh{\bold H} \nc\bi{\bold I} \nc\bj{\bold J} \nc\bk{\bold K} \nc\bl{\bold L} \nc\bm{\bold M} \nc\bn{\bold N} \nc\bo{\bold O} \nc\bp{\bold
P} \nc\bq{\bold Q} \nc\br{\bold R} \nc\bs{\bold S} \nc\bt{\bold T} \nc\bu{\bold U} \nc\bv{\bold V} \nc\bw{\bold W} \nc\bz{\mathbb{ Z}} \nc\bx{\bold
x} \nc\KR{\bold{KR}} \nc\rk{\bold{rk}} \nc\het{\text{ht }}
\nc\toa{\tilde a} \nc\tob{\tilde b} \nc\toc{\tilde c} \nc\tod{\tilde d} \nc\toe{\tilde e} \nc\tof{\tilde f} \nc\tog{\tilde g} \nc\toh{\tilde h}
\nc\toi{\tilde i} \nc\toj{\tilde j} \nc\tok{\tilde k} \nc\tol{\tilde l} \nc\tom{\tilde m} \nc\ton{\tilde n} \nc\too{\tilde o} \nc\toq{\tilde q}
\nc\tor{\tilde r} \nc\tos{\tilde s} \nc\toT{\tilde t} \nc\tou{\tilde u} \nc\tov{\tilde v} \nc\tow{\tilde w} \nc\toz{\tilde z} \nc\woi{w_{\omega_i}}
\nc\BT{\mathbf{T}}
\nc\hei{\operatorname{ht}}
\nc\chara{\operatorname{Char}}
\nc{\buf}{\underline{\bof}} 
\nc{\avee}{\alpha^{\vee}}
\nc{\bLambda}{\mathbf{\Lambda}}
\nc{\bMu}{\mathbf{M}}
\nc{\bNu}{\mathbf{N}}
\nc{\btau}{\mathbf{\sigma}}
\nc{\bsigma}{\mathbf{\sigma}}
\begin{document}

\author[Fourier]{Ghislain Fourier}
\address{Mathematisches Institut, Universit\"at zu K\"oln, Germany}
\email{gfourier@math.uni-koeln.de}
\address{School of Mathematics and Statistics, University of Glasgow, UK}
\email{ghislain.fourier@glasgow.ac.uk}

\thanks{The author was partially supported by the DFG priority program 1388 ``Representation Theory``}

\date{\today}

\title[Homogeneous ideals]{New homogeneous ideals for current algebras: filtrations, fusion products and Pieri rules}
\begin{abstract} 
New graded modules for the current algebra of $\lie{sl}_n$ are introduced. Relating these modules to the fusion product of simple $\lie{sl}_n$-modules and local Weyl modules of truncated current algebras shows their expected impact on several outstanding conjectures. We further generalize results on PBW filtrations of simple $\lie{sl}_n$-modules and use them to provide decomposition formulas for these new modules in important cases.
\end{abstract}
\maketitle

\section{Introduction}
We consider the simple complex Lie algebra $\lie{sl}_n = \lie{b} \oplus \lie{n}^-$ and its current algebra $\lie{sl}_n \otimes \bc[t]$. We fix a pair $(\lambda_1, \lambda_2)$ of dominant integral $\lie{sl}_n$-weights. $F_{\lambda_1, \lambda_2}$ will be introduced as the cyclic $\lie{sl}_n \otimes \bc[t]$-module defined by the homogeneous ideal generated by the kernel of an evaluation map of $\lie{b} \otimes \bc[t]$ and certain monomials in $U(\lie{n}^- \otimes \bc[t])$. $F_{\lambda_1, \lambda_2}$ decomposes into simple, finite-dimensional $\lie{sl}_n$-modules:
\[
F_{\lambda_1, \lambda_2} =_{\lie{sl}_n} \bigoplus_{\tau \in P^+} V(\tau)^{\oplus a_{\lambda_1, \lambda_2}^{\tau}}.
\]
As $F_{\lambda_1, \lambda_2}$ is a highest weight module, we have $a_{\lambda_1, \lambda_2}^{\lambda_1 + \lambda_2} = 1 \text{ and } a_{\lambda_1, \lambda_2}^{\tau} = 0 \text{ if } \tau \nleq \lambda_1 + \lambda_2.$ 
Moreover, $\lie{sl}_n \otimes t^2 \bc[t].F_{\lambda_1, \lambda_2} = 0$ and hence
\[
F_{\lambda_1, \lambda_2} =  U(\lie{n}^- \otimes \bc[t]/(t^2)).\mathbbm{1} \cong U(\lie{n}^-) S(\lie{n}^-).\mathbbm{1}.
\] 
Due to this observation, the $\lie{sl}_n$-highest weight vectors and therefore the multiplicities $ a_{\lambda_1, \lambda_2}^{\tau}$ should be ``controlled`` by $S(\lie{n}^-).\mathbbm{1}$. This provides a close relation to the framework of PBW filtrations (\cite{FFoL11a, FFoL13a}).  By construction, $F_{\lambda_1, \lambda_2}$ is a quotient of $S(\lie{n}^-)/\mathcal{I}(\lambda_1, \lambda_2)$ with an induced $\lie{n}^+$-action $\circ$, where the ideal is generated by 
\[
U(\lie{n}^+) \circ \langle f_\alpha^{a_\alpha + 1}\, | \, \text{ for all positive roots } \alpha \rangle \subset S(\lie{n}^-)
\]
for some $a_\alpha$ depending on $\lambda_1, \lambda_2$. Generalizing the results from \cite[Theorem  and Theorem B]{FFoL11a} we see that a spanning set of $S(\lie{n}^-).\mathbbm{1}$ can be parameterized by integer points in a polytope defined through Dyck paths conditions (Corollary~\ref{span-m}). This leads to the question wether one can give a polytope parametrizing the highest weight vectors. We give a positive answer in certain important cases:
\begin{thm*}\label{intro-thm} Suppose $\lambda_1, \lambda_2 $ satisfy one of the following:
\begin{enumerate}
\item $\lambda_1, \lambda_2$ are both rectangular weights, e.g. multiples of some fundamental weights $\omega_i, \omega_j$,
\item $\lambda_1$ is arbitrary and $\lambda_2$ is either $\omega_j$ or $k \omega_1$,
\item $\lambda_1 + w(\lambda_2)$ is dominant for all Weyl group elements,
\end{enumerate}
then for all dominant weights $\tau$:
\[  a_{\lambda_1, \lambda_2}^{\tau} =  c_{\lambda_1, \lambda_2}^{\tau}, \text{  the Littlewood-Richardson coefficients.}\]
\end{thm*}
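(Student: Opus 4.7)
The strategy is a two-sided sandwich that forces the graded $\lie{sl}_n$-character of $F_{\lambda_1,\lambda_2}$ to agree with that of $V(\lambda_1)\otimes V(\lambda_2)$; since the tensor product decomposes via Littlewood--Richardson, this gives the claimed multiplicities.

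\textbf{Lower bound from a surjection.} I first endow $V(\lambda_1)\otimes V(\lambda_2)$ with an $\lie{sl}_n\otimes\bc[t]$-module structure via evaluation at two distinct points (the Feigin--Loktev fusion-product setup). Its cyclic vector $v_{\lambda_1}\otimes v_{\lambda_2}$ is then killed by the kernel of the evaluation of $\lie{b}\otimes\bc[t]$ by construction, and the monomial relations in $U(\lie{n}^-\otimes\bc[t])$ defining $F_{\lambda_1,\lambda_2}$ are satisfied because the exponents $a_\alpha$ are chosen precisely to be the minimal annihilating powers on the tensor product. The universal property of the defining quotient then yields a surjection $F_{\lambda_1,\lambda_2}\twoheadrightarrow V(\lambda_1)\otimes V(\lambda_2)$ of $\lie{sl}_n$-modules; since both are semisimple this gives $a^\tau_{\lambda_1,\lambda_2}\ge c^\tau_{\lambda_1,\lambda_2}$ for every dominant $\tau$.

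\textbf{Upper bound via the Dyck-path polytope.} Corollary~\ref{span-m} provides a spanning set of $F_{\lambda_1,\lambda_2}$ indexed by integer points of a polytope $P(\lambda_1,\lambda_2)$ cut out by Dyck-path inequalities, and hence a weight-graded upper bound on $\chara F_{\lambda_1,\lambda_2}$. In each of the three cases I would then match this upper bound with the tensor product character. In (i) the polytope $P(k\omega_i,\ell\omega_j)$ splits in a way compatible with rectangular Littlewood--Richardson combinatorics; in (ii) the Dyck inequalities defining $P(\lambda_1,\omega_j)$ or $P(\lambda_1,k\omega_1)$ degenerate to those encoding the classical Pieri rule (whence the title of the paper); in (iii) the Weyl-dominance hypothesis reduces $c^\tau_{\lambda_1,\lambda_2}$ to the weight multiplicities $\dim V(\lambda_2)_{\tau-\lambda_1}$, and the polytope character collapses to $\chara V(\lambda_1)\cdot\chara V(\lambda_2)$. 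In each case we obtain $\sum_\tau a^\tau_{\lambda_1,\lambda_2}\,\chara V(\tau)\le\sum_\tau c^\tau_{\lambda_1,\lambda_2}\,\chara V(\tau)$, which together with the inequality from the surjection forces equality termwise.

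\textbf{Main obstacle.} The technical heart is the character identification in case (i), the doubly rectangular one, where the combinatorics of rectangular Littlewood--Richardson tableaux must be aligned explicitly with the Dyck-path inequalities through a cell decomposition of $P(k\omega_i,\ell\omega_j)$ or an explicit bijection with LR tableaux; this is where the fully-developed generalization of the Feigin--Fourier--Littelmann polytope results to two weights is needed. Cases (ii) and (iii) are more transparent: (ii) is essentially a one-variable check in which the Dyck inequalities reduce to the Pieri rule, and (iii) is the stable range where the tensor product itself is already a shifted weight-multiplicity sum and the polytope degenerates accordingly.
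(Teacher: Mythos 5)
Your lower-bound step is fine and matches the paper's Lemma~\ref{whatever} (the paper surjects onto the fusion product $V(\lambda_1)_{c_1}\ast V(\lambda_2)_{c_2}$ rather than the plain tensor product, but these have the same $\lie{sl}_n$-decomposition, so this is cosmetic). The upper bound, however, contains a genuine gap. Corollary~\ref{span-m} does \emph{not} provide a spanning set of $F_{\lambda_1,\lambda_2}$; it spans only the subspace $U(\lie{n}^-\otimes t).\mathbbm{1}\subset F_{\lambda_1,\lambda_2}$, whereas the whole module is $U(\lie{n}^-)\,U(\lie{n}^-\otimes t).\mathbbm{1}$. So the weight-graded inequality $\chara F_{\lambda_1,\lambda_2}\le\text{(polytope character)}$ that you deduce from it is not available, and the termwise comparison of characters you build on it does not go through.

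What the paper actually does is bound the \emph{multiplicities} $a^\tau_{\lambda_1,\lambda_2}$ directly, not the character. It introduces a refined filtration of $U(\lie{n}^-\otimes t)$ (by $t$-degree, then by height of the weight, then by a monomial order) and shows that, in the associated graded of $F_{\lambda_1,\lambda_2}$, every $\lie{sl}_n$-highest-weight vector is the image of some monomial $\mathbf{f}^{\mathbf s}.\mathbbm{1}$ with $\mathbf s\in S(\lambda_1,\lambda_2)$. Defining $S_{hw}(\lambda_1,\lambda_2)$ as the set of such \emph{highest weight points}, Corollary~\ref{cor-upper} yields the exact equality $a^\tau_{\lambda_1,\lambda_2}=\sharp\{\mathbf s\in S_{hw}:\wt(\mathbf s)=\lambda_1+\lambda_2-\tau\}$. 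The upper bound then comes from bounding $S_{hw}$ from above in each case: for (3) one uses $S_{hw}\subseteq S(\lambda_1,\lambda_2)$ together with \cite[Theorem~B]{FFoL11a} (since $a_\alpha=\lambda_2(h_\alpha)$ in the stable range, $S(\lambda_1,\lambda_2)$ enumerates a basis of $V(\lambda_2)$); for (1) the paper proves a specific reduction, Lemma~\ref{diagonal}, showing that every highest weight point is supported on the single ``anti-diagonal'' of roots $\alpha_{i-q,j+q}$, after which a dominance check matches the rectangular LR combinatorics; for (2) it is a direct straightening computation identifying $S_{hw}$ with the Pieri-rule index sets $T_{\lambda,\omega_j}$ or $T_{\lambda,k\omega_1}$. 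Your ``cell decomposition'' or ``character collapse'' sketches for cases (1) and (2) are not wrong as aspirations, but without the highest-weight-point machinery you have no mechanism to translate the polytope data into an upper bound on $a^\tau$, and in case (1) the combinatorial reduction (the diagonal-support lemma) is the real content and is not addressed.
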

\noindent
Part (2) might be seen as a Pieri rule while part (3) covers $\lambda_1 \gg \lambda_2$. So for fixed $\lambda_2$ we cover the minimal case, e.g. $\lambda_1$ being a fundamental weight, and the large case, e.g. $\lambda_1 \gg \lambda_2$. Note that the results from \cite{CV13}, \cite{V13} imply $a_{\lambda_1, \lambda_2}^{\tau} =  c_{\lambda_1, \lambda_2}^{\tau}$ for all $\tau \in P^+$ if $\lambda_1 = m \omega_i$ and the height of $\lambda_2$ is less than $m+1$. This covers of course part (1) of the theorem but we provide here a different proof using the relation to the PBW filtration.\\

\noindent
The paper is motivated by the search for homogeneous ideals in $U(\lie{sl}_n \otimes \bc[t])$ defining the fusion product $V(\lambda_1)_{c_1} \ast V(\lambda_2)_{c_2}$ of two simple $\lie{sl}_n$-modules. This is the associated graded module of the tensor product of corresponding evaluation modules (\cite{FL99} and also Section~\ref{fusion}). These ideals can be deduced straightforward for $\lie{sl}_2$ (Lemma~\ref{fusion-sl2}) and we generalize this to obtain generators for every $\lie{sl}_2$-triple. The theorem implies that in the considered cases (Lemma~\ref{whatever})
\[
F_{\lambda_1, \lambda_2} \cong V(\lambda_1)_{c_1} \ast V(\lambda_2)_{c_2}
\]
and we conjecture that this is true for all pairs of dominant integral weights.\\ 
Let us briefly explain why these modules $F_{\lambda_1, \lambda_2}$ and especially the conjectured isomorphism to the fusion product is of special interest. In fact, this is closely related to several important conjectures:\\
The first one is the conjecture that the fusion product of finitely many tensor factors is independent of the evaluation parameter (\cite{FL99}). This independence has been proved for some classes of modules but so far not for arbitrary tuples of dominant integral weights. Note that two-factor case can be deduced from straightforward calculations.\\
The second conjecture is on Schur positivity of certain symmetric functions. In \cite{CFS14} (see also \cite{DP2007}) a partial order on pairs of dominant weights has been introduced. It is conjectured that along with the partial order, the difference of the products of the corresponding Schur functions is a non-negative linear combination of Schur functions (this has been conjectured also independently by Lam, Postnikov and Pylyavskyy), hence \textit{Schur positive}. Note here, that this generalizes a conjecture on Schur positivity along row shuffles  (\cite{Oko97, FFLP05}), proved in \cite{LPP07}.\\
The third related conjecture is on local Weyl modules for truncated current algebras. Local Weyl modules for generalized current algebras, $\lie{sl}_n \otimes A$, where $A$ is a commutative, associative, unital $\bc$-algebra, have gained much attention in the last two decades. Due to their homological properties they play an important role in the category of finite-dimensional $\lie{sl}_n \otimes A$-modules, which is not semi-simple in general (for more see \cite{CFK10}).\\
Although quite a lot of research has been done on local Weyl modules, their explicit character is known for a few algebras only. For $A = \bc[t^{\pm 1}], \bc[t]$ their character is given by the tensor product of fundamental modules for $\lie{sl}_n$ (\cite{CL06, FoL07}), for semi-simple, finite-dimensional $A$, the character is given by $\dim A$ copies of a simple $\lie{sl}_n$-module. Besides these cases the character is not known for general local Weyl modules,  not even for the ``smallest`` non-semi-simple algebra $A = \bc[t]/(t^2)$.\\
It is conjectured that for $A = \bc[t]/(t^K)$ this character is also, similar to $\bc[t]$, given by the tensor product of simple $\lie{sl}_n$-modules. We investigate here on the $K=2$ case and prove that in this case the local Weyl modules are isomorphic to certain $F_{\lambda_1, \lambda_2}$, more detailed: $(\lambda_1, \lambda_2)$ is the unique maximal element in the aforementioned poset of pairs of dominant weights (adding up to a fixed $\lambda$). \\
A proof, that the $a_{\lambda_1, \lambda_2}^{\tau}$ are in fact the Littlewood-Richardson coefficients, would imply the conjectures on Schur positivity and on local Weyl modules immediately (Lemma~\ref{fusion-sp}, Lemma~\ref{weyl-f}) and gives another proof for the two-factor of the independence conjecture (Lemma~\ref{whatever}).

The paper is organized as follows: In Section~\ref{one} we refer to the basic definitions and in Section~\ref{graded} we introduce the modules $F_{\lambda_1, \lambda_2}$, proving first properties. In Section~\ref{PBW} we recall the PBW filtration and work out the relation to our new modules, while in Section~\ref{fusion} we recall the fusion products and work out their relation to our modules. In Section~\ref{firstproof} we give the proof for the $\lie{sl}_2$ -case and part(3) of Theorem~\ref{intro-thm}. Section~\ref{KR} contains the proofs of part (1) of Theorem~\ref{intro-thm} and Section~\ref{pieri} the proof of part (2). Section~\ref{poset} recalls the partial order on pairs of dominant weights and also local Weyl modules, and relates these constructions to the new modules.

\textbf{Acknowledgement:} The author would like to thank Evgeny Feigin for various discussions on these modules and explaining the calculations for the independence conjecture in the two-factor case, and further Christian Korff for asking about Pieri rules.

\section{Preliminaries}\label{one}
 Let $\lie{g} = \lie {sl}_{n}(\bc)$, the special linear Lie algebra. We fix a triangular decomposition $\lie{sl}_n = \lie n^+ \oplus \lie h \oplus \lie n^-$ and denote a fixed set of simple roots $\Pi = \{ \alpha_1, \ldots, \alpha_{n-1}\}$, here we use the numbering from \cite{Bou02}. Further, we set $I =\{ i, \ldots, n-1 \}$. The sets of roots is denoted R, the set of positive roots $R^+$. Every root $\beta \in R^+$ can be expressed uniquely as $\alpha_i + \alpha_{i+1} + \ldots + \alpha_j$ for some $i \leq j$, we denote this root $\alpha_{i,j}$.
For $\alpha \in R$, we denote the root space 
\[\lie g_{\alpha} = \{ x \in \lie{sl}_n \, | \, [h,x] = \alpha(h) x \; \forall \; h \, \in \lie h \} = \langle x_\alpha^+ \, | \, \text{ if } \alpha \in R^+ \rangle.\] 
Further, for $\alpha \in R^+$, we fix a $\lie{sl}_2$-triple $\{ x_\alpha^+, x_{\alpha}^-, h_{\alpha} \}$. Denote $P \subset \lie h^*$, respective $P^+$ the integral weights, respective dominant integral weights, and $\{ \omega_1, \ldots, \omega_{n-1}\}$ the set of fundamental weights.\\

\subsection{}
We recall some notations and facts from representation theory. Let $V$ be a finite-dimensional $\lie {sl}_{n}$-module, then $V$ decomposes into its weight spaces with respect to the $\lie h$-action
\[
V = \bigoplus_{ \tau \in P } V_\tau =  \bigoplus_{ \tau \in P }  \{ v \in V \; | \; h.v = \tau(h).v \text{ for all } h \in \lie h\}
\]
\noindent
$P^+$ parameterizes the simple finite-dimensional modules. For $\lambda \in P^+$ we denote the simple, finite-dimensional $\lie {sl}_n$-module of highest weight $\lambda$ by $V(\lambda)$. Further we denote by $v_{\lambda}$ a highest weight vector of $V(\lambda)$

The category of finite-dimensional $\lie{sl}_n$-modules is semi-simple, hence the tensor product of two simple modules decomposes into the direct sum of simple modules, so for $\lambda_1, \lambda_2 \in P^+$
\[
V(\lambda_1) \otimes V(\lambda_2) \cong_{\lie{sl}_n} \bigoplus_{\tau \in P^+} V(\tau)^{c_{\lambda_1, \lambda_2}^\tau}.
\]
Here, $c_{\lambda_1, \lambda_2}^\tau$ denotes the multiplicity of the simple module $V(\tau)$ in a decomposition of the tensor product. These numbers are known as Littlewood-Richardson coefficients and there are several known formulas to compute them (\cite{Kli68, N2002, Lit94} to name but a few).

\subsection{}
The vector space $\lie {sl}_n \otimes \bc[t]$ equipped with the bracket 
\[
[x \otimes p(t) , y \otimes q(t)] = [x,y] \otimes p(t)q(t) \; \; \forall \, x,y \in \lie{sl}_n, p(t), q(t) \in \bc[t]
\]
is a Lie algebra and called the \textit{current algebra} of $\lie {sl}_n$. One may also view this as the Lie algebra of regular functions on $\bc$ with values in $\lie {sl}_n$ (see \cite{NSS12}). The natural grading on $\bc[t]$ induces a grading on $U(\lie{sl}_n \otimes \bc[t])$, where the component of degree $0$ is $U(\lie{sl}_n \otimes 1)$.

\noindent
For a fixed $k \geq 1$, the \textit{truncated current algebra} is the graded quotient of the current algebra 
\[
\lie {sl}_n \otimes \bc[t] / (\lie{sl}_n \otimes t^K \bc[t]) \cong \lie{sl}_n \otimes \bc[t]/(t^K).
\]
 In this paper we will be dealing mainly with the $K=2$ case. Then $U(\lie{sl}_n \otimes \bc[t]/(t^2))$ can be seen as the smash product of $U(\lie{sl}_n)$ and the polynomial ring $S(\lie n^-)$ (\cite{Hag13}).

\subsection{}
The representation theory of $\lie{sl}_n \otimes \bc[t]$ has been subject to a lot of research during the last 25 years (\cite{CP01, FF02, CM04, CL06, FoL06, FoL07, Nao12} to name but a few). The most important property we should mention is, that the category of finite-dimensional $\lie{sl}_n \otimes \bc[t]$-modules is not semi-simple. \\
Every simple, finite-dimensional module is the tensor product of evaluation modules (\cite{Rao93}). This is still true if we replace $\bc[t]$ by a commutative, finitely generated algebra $A$ and instead of complex numbers, evaluations in pairwise distinct maximal ideals \cite{CFK10}.\\ 
Although the simple, finite-dimensional modules are therefore easily described and quite well understood, the task of understanding the indecomposable modules is still unsolved (besides the case $A = \bc[t], \bc[t^{\pm}]$ and the cases where $A$ is finite-dimensional and semi-simple).\\ 
Even in the case where $A$ is the two-dimensional truncated polynomial ring, $A = \bc[t]/(t^2)$, the category of finite-dimensional modules is far from being well understood. While the simple modules are in one-to-one correspondence to simple modules of $\lie{sl}_{n}$ (by using the evaluation at the unique maximal ideal of $ \bc[t]/(t^2)$, \cite{CFK10}), there is not much known about indecomposables, projectives etc. We will return to this point in Section~\ref{poset}.


\section{Some new graded module}\label{graded}

We introduce new graded modules for $\lie {sl}_{n}  \otimes \bc[t]$ as follows. For fixed $\lambda_1, \lambda_2 \in P^+$, let 
\[
(\lambda_1 + \lambda_2) : \lie h \longrightarrow \bc_{\lambda_1 + \lambda_2}
\]
be the one-dimensional $\lie h$-module. We extend this trivially to an action of $\lie b = \lie n^+ \oplus \lie h$ on $\bc_{\lambda_1 + \lambda_2}$. And further, by evaluation at $t = 0$, we obtain a one-dimensional module
\[
\lie b \otimes \bc[t] \longrightarrow \lie b \longrightarrow \lie h \longrightarrow \bc_{\lambda_1 + \lambda_2}.
\] 
We consider the induced module for the subalgebra $(\lie b \otimes 1) \oplus (\lie {sl}_n \otimes t\bc[t]) \subset \lie{sl}_n \otimes \bc[t]$
\[
\Ind_{\lie b \otimes \bc[t]}^{\lie b \otimes 1 \oplus \lie {sl}_n \otimes t\bc[t]} \bc_{\lambda_1 + \lambda_2}
\]
and denote by $M_{\lambda_1, \lambda_2}$ the quotient by the left ideal generated by
\[
\lie{n}^- \otimes t^2\bc[t] \text{ and }  (f_{\alpha} \otimes t)^{\min \{\lambda_1(h_\alpha), \lambda_2(h_\alpha) \} + 1 }, \, \forall \; \alpha \in R^+.
\]
We introduce the $\lie{sl}_n \otimes \bc[t]$-module $F_{\lambda_1, \lambda_2}$ as the maximal integrable (as a $\lie{sl}_n$-module) quotient 
\[
F_{\lambda_1, \lambda_2} = \overline{\Ind_{\lie b \otimes 1 \oplus \lie{sl}_n \otimes t\bc[t]}^{\lie{sl}_n \otimes \bc[t]} M_{\lambda_1, \lambda_2}}.
\]

Due to the construction, we can give defining relations on a generator of $F_{\lambda_1, \lambda_2}$.
\begin{prop}\label{relation}
Let $\lambda_1, \lambda_2 \in P^+$ and $\lambda = \lambda_1 +  \lambda_2$. Then $F_{\lambda_1, \lambda_2}$ is the $\lie{sl}_n \otimes \bc[t]$-module generated through $w$ with relations
\[ \lie{n}^+ \otimes \bc[t].w = 0 , \ \lie{h} \otimes t \bc[t].w = 0 , \ \lie{n}^- \otimes t^2\bc[t] = 0 \]
and for all $\alpha \in R^+$ and $h \in \lie h$:
\[
0 = (f_{\alpha} \otimes 1)^{\lambda(h_\alpha) + 1}.w = (f_\alpha \otimes t)^{\min\{\lambda_1(h_\alpha), \lambda_2(h_\alpha) \}+1} = (h\otimes 1 - \lambda(h)).w.
\]
\end{prop}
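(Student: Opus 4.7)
My plan is to show that the cyclic $\lie{sl}_n\otimes\bc[t]$-module $\tilde{F}$ defined on one generator $w$ by the listed relations and $F_{\lambda_1,\lambda_2}$ surject onto each other, and hence are isomorphic.

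The direction $\tilde{F}\twoheadrightarrow F_{\lambda_1,\lambda_2}$ is a bookkeeping verification that each listed relation holds on the canonical generator in $F_{\lambda_1,\lambda_2}$, read off step by step from the construction: the relations $(h\otimes 1-\lambda(h)).w=0$ and $\lie{n}^+\otimes 1.w=0$ come from the trivial extension of $\bc_{\lambda_1+\lambda_2}$ from $\lie{h}$ to $\lie{b}$; the relations $\lie{h}\otimes t\bc[t].w=0$ and $\lie{n}^+\otimes t\bc[t].w=0$ come from the evaluation $\lie{b}\otimes\bc[t]\to\lie{b}$ at $t=0$; the relations $\lie{n}^-\otimes t^2\bc[t].w=0$ and $(f_\alpha\otimes t)^{\min\{\lambda_1(h_\alpha),\lambda_2(h_\alpha)\}+1}.w=0$ are imposed by the explicit quotient defining $M_{\lambda_1,\lambda_2}$; and $(f_\alpha\otimes 1)^{\lambda(h_\alpha)+1}.w=0$ is the integrability relation on an $\lie{sl}_n$-highest weight vector of weight $\lambda$, which is precisely what the maximal integrable quotient imposes.

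For the converse $F_{\lambda_1,\lambda_2}\twoheadrightarrow\tilde{F}$, I would mirror the construction inside $\tilde{F}$. The line $\bc.w$ carries the structure of $\bc_{\lambda_1+\lambda_2}$ as a $\lie{b}\otimes\bc[t]$-module thanks to the weight, $\lie{n}^+$- and $\lie{b}\otimes t\bc[t]$-relations, so Frobenius reciprocity yields a $(\lie{b}\otimes 1)\oplus(\lie{sl}_n\otimes t\bc[t])$-equivariant map from the induced module to $\tilde{F}$, which factors through $M_{\lambda_1,\lambda_2}$ by the relations $\lie{n}^-\otimes t^2\bc[t].w=0$ and $(f_\alpha\otimes t)^{\min\{\lambda_1(h_\alpha),\lambda_2(h_\alpha)\}+1}.w=0$. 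A second application of Frobenius reciprocity produces a $\lie{sl}_n\otimes\bc[t]$-equivariant surjection from $\Ind_{(\lie{b}\otimes 1)\oplus(\lie{sl}_n\otimes t\bc[t])}^{\lie{sl}_n\otimes\bc[t]} M_{\lambda_1,\lambda_2}$ onto $\tilde{F}$.

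The main obstacle, and the only genuinely non-formal step, is to verify that $\tilde{F}$ is itself $\lie{sl}_n$-integrable so that the last map factors through the maximal integrable quotient $F_{\lambda_1,\lambda_2}$. The listed relations force $\lie{sl}_n\otimes t^2\bc[t].w=0$, and since $\lie{sl}_n\otimes t^2\bc[t]$ is a Lie ideal of $\lie{sl}_n\otimes\bc[t]$ the usual induction on PBW-degree shows it annihilates all of the cyclic module $\tilde{F}$; thus $\tilde{F}$ descends to a module over $\lie{sl}_n\otimes\bc[t]/(t^2)$, in which $\lie{n}^-\otimes t$ is abelian. A PBW rearrangement then gives $\tilde{F}=S(\lie{n}^-\otimes t).V$ with $V:=U(\lie{sl}_n).w$ a quotient of $V(\lambda)$ by the highest-weight relations on $w$, and the multiplication map $S(\lie{n}^-\otimes t)\otimes V\to\tilde{F}$ is $\lie{sl}_n$-equivariant for the adjoint action on $\lie{n}^-\otimes t$. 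Since each $t$-graded piece of the domain is finite-dimensional, each element of $\tilde{F}$ lies in a finite-dimensional $\lie{sl}_n$-submodule, which is the required integrability.
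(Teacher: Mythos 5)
Your argument follows the same mutual-surjection strategy as the paper's proof, but fleshes out the one step the paper only asserts: that the universal module $\tilde F$ presented by the listed relations is $\lie{sl}_n$-integrable, so that the map from $\Ind_{\lie b\otimes 1\oplus\lie{sl}_n\otimes t\bc[t]}^{\lie{sl}_n\otimes\bc[t]}M_{\lambda_1,\lambda_2}$ factors through the maximal integrable quotient $F_{\lambda_1,\lambda_2}$. Your route to integrability---$\lie{sl}_n\otimes t^2\bc[t]$ is an ideal killed on $w$, hence on all of the cyclic $\tilde F$; descend to $\lie{sl}_n\otimes\bc[t]/(t^2)$, where $\lie n^-\otimes t$ is abelian; then PBW gives $\tilde F=S(\lie n^-\otimes t)\cdot V$ with $V=U(\lie{sl}_n).w$ a quotient of $V(\lambda)$, and each $t$-graded piece is finite-dimensional---is correct and is a genuine addition over the paper's terse ``every module satisfying the relations is an integrable quotient.'' One phrase should be corrected: the multiplication map $S(\lie n^-\otimes t)\otimes V\to\tilde F$ is \emph{not} $\lie{sl}_n$-equivariant for ``the adjoint action on $\lie n^-\otimes t$,'' because $\lie n^-\otimes t$ is not $\ad(\lie{sl}_n)$-stable (for instance $[e_\alpha,f_\alpha\otimes t]=h_\alpha\otimes t$). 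You do not need that equivariance: it suffices that the defining relations are $t$-homogeneous, so $\tilde F$ inherits a $t$-grading in which $\lie{sl}_n=\lie{sl}_n\otimes 1$ acts in degree zero, and each graded piece $\tilde F^s$ is finite-dimensional (being spanned by the image of $S^s(\lie n^-\otimes t)\otimes V$), hence a finite-dimensional and in particular integrable $\lie{sl}_n$-module. If you do want an equivariant surjection, replace $\lie n^-\otimes t$ by the $\ad$-stable subspace $\lie{sl}_n\otimes t$, for which the statement is true.
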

\begin{proof}
We have to deal with the $\lie{sl}_n$-relation only. But since $F_{\lambda_1, \lambda_2}$ is integrable we have immediately $(f_\alpha \otimes 1)^{\lambda(h_\alpha) + 1}.\mathbbm{1} = 0$. Therefore $F_{\lambda_1, \lambda_2}$ is a quotient of the module given by the relations in the proposition. On the other hand, every module satisfying the relations is an integrable quotient of $\Ind_{\lie b \otimes 1 \oplus \lie{sl}_n \otimes t\bc[t]}^{\lie{sl}_n \otimes \bc[t]} M_{\lambda_1, \lambda_2}$.
\end{proof}

\begin{prop}\label{firstprop}
Let $\lambda_1, \lambda_2 \in P^+$. Then
\begin{enumerate}
\item $F_{\lambda_1, \lambda_2}$ is a non-negatively graded $\lie {sl}_{n} \otimes \bc[t]$-module.
\item $F_{\lambda_1, \lambda_2}$ is finite-dimensional.
\item $F_{\lambda_1, \lambda_2}= \bigoplus_{s \geq 0} F_{\lambda_1, \lambda_2}^s$, and $ F_{\lambda_1, \lambda_2}^s$ is a $\lie{sl}_n$-module.

\item $F_{\lambda_1, \lambda_2}$ has a unique simple quotient isomorphic to $V(\lambda_1 + \lambda_2)_0$.
\item $\lie{sl}_{n} \otimes t^2 \bc[t]. F_{\lambda_1, \lambda_2} = 0$, and hence $F_{\lambda_1, \lambda_2}$ is a $\lie{sl}_n \otimes \bc[t]/(t^2)$-module.
\end{enumerate}
\end{prop}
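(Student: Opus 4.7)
The plan is to prove the five items in the order (1), (5), (2), (3), (4), since each uses what comes before. For (1), every relation imposed in Proposition~\ref{relation} is homogeneous with respect to the standard grading of $U(\lie{sl}_n \otimes \bc[t])$ by $t$-degree, and the cyclic generator $w$ sits in degree zero, so $F_{\lambda_1, \lambda_2}$ inherits a $\bz_{\geq 0}$-grading. For (5), I would first show that $\lie{sl}_n \otimes t^2\bc[t]$ annihilates $w$: its $\lie{n}^-$-component is among the defining relations, its $\lie{h}$-component is absorbed by $\lie{h} \otimes t\bc[t].w = 0$, and its $\lie{n}^+$-component by $\lie{n}^+ \otimes \bc[t].w = 0$. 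Since $\lie{sl}_n \otimes t^2\bc[t]$ is a Lie ideal of $\lie{sl}_n \otimes \bc[t]$, annihilating $w$ forces it to annihilate all of $F_{\lambda_1,\lambda_2} = U(\lie{sl}_n \otimes \bc[t]).w$, so $F_{\lambda_1, \lambda_2}$ descends to the truncated current algebra $\lie{sl}_n \otimes \bc[t]/(t^2)$.

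For (2), I would use (5) together with PBW, ordered so that $\lie{sl}_n$-basis elements precede $\lie{sl}_n \otimes t$-basis elements, to write $F_{\lambda_1, \lambda_2} = U(\lie{sl}_n) \cdot U(\lie{sl}_n \otimes t).w$. In the truncation $\lie{sl}_n \otimes t$ is abelian, so $U(\lie{sl}_n \otimes t)$ coincides with the symmetric algebra; combining this with $\lie{n}^+ \otimes t.w = 0$ and $\lie{h} \otimes t.w = 0$ reduces $U(\lie{sl}_n \otimes t).w$ to $U(\lie{n}^- \otimes t).w$. Commutativity of the $f_\alpha \otimes t$ together with the relations $(f_\alpha \otimes t)^{\min(\lambda_1(h_\alpha), \lambda_2(h_\alpha))+1}.w = 0$ then imply that this is spanned by the finitely many monomials $\prod_\alpha (f_\alpha \otimes t)^{n_\alpha}.w$ with $n_\alpha \leq \min(\lambda_1(h_\alpha), \lambda_2(h_\alpha))$, hence a finite-dimensional subspace $W$. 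Now $F_{\lambda_1, \lambda_2} = U(\lie{sl}_n).W$, and since $F_{\lambda_1, \lambda_2}$ is integrable as an $\lie{sl}_n$-module by construction, every cyclic $U(\lie{sl}_n)$-submodule is finite-dimensional; summing over a basis of $W$ yields (2). Part (3) then follows at once: the degree-zero piece of the grading is $\lie{sl}_n \otimes 1$, so each $F_{\lambda_1, \lambda_2}^s$ is $\lie{sl}_n$-stable and finite-dim by (2).

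For (4), finite-dimensionality and cyclicity of $F_{\lambda_1, \lambda_2}$ imply that the sum of all proper submodules is still proper (none can contain $w$), giving a unique maximal proper submodule and hence a unique simple quotient $V$. By \cite{Rao93}, $V$ is an evaluation module $V(\mu)_c$; the image $\bar w$ of $w$ is a nonzero highest weight vector of weight $\lambda_1 + \lambda_2$, forcing $\mu = \lambda_1 + \lambda_2$, while $\lie{h} \otimes t.\bar w = 0$ forces the evaluation parameter $c = 0$.

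The hard part is finite-dimensionality in (2): naively $F_{\lambda_1,\lambda_2}$ is cyclic over an infinite-dimensional algebra, so one must combine the three ingredients in exactly the above order—descent to the truncated algebra via (5), the abelianness of $\lie{n}^- \otimes t$ in that truncation which converts the $t$-relations into uniform monomial bounds, and the standard fact that integrable cyclic $\lie{sl}_n$-modules are finite-dimensional.
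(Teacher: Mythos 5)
Your proof is correct, and for parts (1), (3), and (5) it is essentially the paper's argument (homogeneous relations give the grading; degree-zero component $U(\lie{sl}_n \otimes 1)$ gives the $\lie{sl}_n$-stability of each graded piece; and (5) follows from the generating relations plus the fact that $\lie{sl}_n \otimes t^2\bc[t]$ is a Lie ideal). But for (2) you take a genuinely different and more self-contained route than the paper. The paper simply observes that $F_{\lambda_1,\lambda_2}$ is a quotient of the graded local Weyl module $W_{\bc[t]}(0,\lambda_1+\lambda_2)$ and cites \cite{CP01} for finite-dimensionality of the latter. You instead prove (5) first, descend to the truncation $\lie{sl}_n \otimes \bc[t]/(t^2)$, use PBW and the abelianness of $\lie{sl}_n \otimes t$ there to reduce to $U(\lie{n}^- \otimes t).w$, show this piece is finite-dimensional directly from the monomial relations $(f_\alpha \otimes t)^{\min(\lambda_1(h_\alpha),\lambda_2(h_\alpha))+1}.w = 0$ (these are powers of every generator of the polynomial ring $S(\lie{n}^- \otimes t)$, so the quotient is finite-dimensional), and then finish via integrability of $F_{\lambda_1,\lambda_2}$ as an $\lie{sl}_n$-module. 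This avoids any external appeal to the theory of Weyl modules, which is a legitimate advantage; the paper's route is shorter because it can lean on \cite{CP01}. For (4) you use the abstract existence of a unique maximal proper submodule together with the classification of simple modules as evaluation modules (\cite{Rao93}), whereas the paper exhibits the maximal proper submodule explicitly as $U(\lie{sl}_n \otimes \bc[t])\,(\lie{sl}_n \otimes t\bc[t]).\mathbbm{1}$; both work.

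One small imprecision worth flagging in your (4): ``finite-dimensionality and cyclicity ... imply that the sum of all proper submodules is still proper (none can contain $w$)'' is not quite right as stated. For an arbitrary finite-dimensional cyclic module over an algebra, the sum of all proper submodules can be the whole module. What actually makes the argument go through here is that $w$ spans the one-dimensional weight space of the extremal weight $\lambda_1 + \lambda_2$, every submodule is a weight submodule, and a proper submodule therefore has zero component in that weight space; hence the sum of all proper submodules also has zero component there and cannot contain $w$. You should state the weight-space reasoning rather than attributing the uniqueness to finite-dimensionality plus cyclicity alone.
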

\begin{proof}
Part $(1)$ is clear, since the defining relations of  $F_{\lambda_1, \lambda_2}$ are homogeneous and $U(\lie{sl}_n \otimes \bc[t])$ is non-negatively graded.
 Due to the defining relations, $F_{\lambda_1, \lambda_2}$ is a quotient of the local graded Weyl module for $U(\lie{sl}_n \otimes \bc[t])$ of highest weight $\lambda_1 + \lambda_2$, $W_{\bc[t]}(0, \lambda_1 + \lambda_2)$ (see Proposition~\ref{f-weyl} or \cite{CP01} for details, they are not relevant here). In \cite{CP01} it is shown that this local graded Weyl module is finite-dimensional, which implies $(2)$. \\
Now, as $\lie{sl}_n \cong \lie{sl}_n \otimes 1 \hookrightarrow \lie{sl}_n \otimes \bc[t]$, $F_{\lambda_1, \lambda_2}$ is also a finite-dimensional $\lie{sl}_n$-module, hence decomposes into a direct sum of simple finite-dimensional $\lie{sl}_n$-modules. Moreover, as $U(\lie{sl}_n)$ is the degree $0$ part of $U(\lie{sl}_n \otimes \bc[t])$, we see that each graded component $F^s_{\lambda_1, \lambda_2}$ is a $\lie{sl}_n$-module and each simple $\lie{sl}_n$-module is contained in a unique $F_{\lambda_1, \lambda_2}^s$. This implies $(3)$.\\
The degree $0$ component of $F_{\lambda_1, \lambda_2}$ is obviously isomorphic to $V(\lambda_1 + \lambda_2)$ as a $\lie{sl}_n$-module. 
A standard argument shows that
\[
U(\lie{sl}_n \otimes \bc[t])\lie{sl}_n \otimes t\bc[t].\mathbbm{1}
\] 
is the maximal proper submodule not containing $\mathbbm{1}$. The quotient by this submodule is isomorphic to the graded evaluation module $V(\lambda_1 + \lambda_2)_0$, this gives $(4)$. Part $(5)$ follows again immediately from the defining relations.
\end{proof}

Since $F_{\lambda_1, \lambda_2}^s$ is a $\lie {sl}_{n}$-module,  it has a decomposition into a direct sum of simple $\lie{sl}_{n}$-modules
\[ F_{\lambda_1, \lambda_2}^s \cong_{\lie{sl}_{n}} \bigoplus_{\tau \in P^+} V(\tau)^{\oplus a_{\lambda_1, \lambda_2}^\tau(s)}, \text{ for some } a_{\lambda_1, \lambda_2}^{\tau}(s) \geq 0.\]
We set
\[
 a_{\lambda_1, \lambda_2}^{\tau} :=  \sum\limits_{s \geq 0} a_{\lambda_1, \lambda_2}^{\tau}(s),
\]
so we have
\[
\dim \Hom_{\lie{sl}_{n}}( F_{\lambda_1, \lambda_2}, V(\tau)) =  a_{\lambda_1, \lambda_2}^{\tau} .
\]

We see immediately from Proposition~\ref{relation}:
\begin{coro}\label{cor-hw} Let $\lambda_1, \lambda_2 \in P^+$, then
\[
a_{\lambda_1, \lambda_2}^{\lambda_1 + \lambda_2} = 1 \, ; \, a_{\lambda_1, \lambda_2}^{\tau} = 0 \text { for } \tau \nleq \lambda_1 + \lambda_2.
\]
\end{coro}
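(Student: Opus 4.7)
The plan is to show that $F_{\lambda_1,\lambda_2}$ is a cyclic highest-weight module for $\lie{sl}_n$ generated by $w$ of weight $\lambda_1+\lambda_2$, from which both assertions fall out by a standard weight-space count.

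First, I would exploit the relations in Proposition~\ref{relation}. They give $\lie{n}^+\otimes\bc[t].w=0$, $(h\otimes 1).w=\lambda(h)w$ for all $h\in\lie{h}$, and $\lie{h}\otimes t\bc[t].w=0$. Invoking the PBW theorem for $U(\lie{sl}_n\otimes\bc[t])$ with the ordering $U(\lie{n}^-\otimes\bc[t])\cdot U(\lie{h}\otimes\bc[t])\cdot U(\lie{n}^+\otimes\bc[t])$ and pushing these relations through, the cyclic module collapses to
\[
F_{\lambda_1,\lambda_2} = U(\lie{n}^-\otimes\bc[t]).w.
\]

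Next, I would observe that under the adjoint $\lie{h}$-action every monomial in $U(\lie{n}^-\otimes\bc[t])$ lies in a weight space whose weight belongs to $-Q^+$, the non-positive part of the root lattice; monomials of positive degree in the generators of $\lie{n}^-\otimes\bc[t]$ carry strictly negative weights. Consequently all $\lie{h}$-weights of $F_{\lambda_1,\lambda_2}$ are $\leq\lambda_1+\lambda_2$ in the dominance order, and the top weight space is $(F_{\lambda_1,\lambda_2})_{\lambda_1+\lambda_2}=\bc w$, one-dimensional.

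Decomposing into simple $\lie{sl}_n$-modules, any summand $V(\tau)$ with $\tau\not\leq\lambda_1+\lambda_2$ would contribute a weight outside $\lambda_1+\lambda_2-Q^+$, which is impossible, giving $a^\tau_{\lambda_1,\lambda_2}=0$ in that range. For $\tau=\lambda_1+\lambda_2$, only one summand can contain the one-dimensional top weight space, hence $a^{\lambda_1+\lambda_2}_{\lambda_1,\lambda_2}=1$. No real obstacle is anticipated; the argument is entirely the standard highest-weight bookkeeping applied to the generator $w$.
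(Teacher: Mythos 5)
Your argument is correct and is exactly the standard highest-weight bookkeeping the paper has in mind when it states the corollary follows ``immediately'' from Proposition~\ref{relation}: use the relations to reduce to $U(\lie{n}^-\otimes\bc[t]).w$, note all weights lie in $\lambda_1+\lambda_2-Q^+$ with the top space one-dimensional, and read off the multiplicities. You have merely spelled out the steps the paper leaves implicit; there is no gap and no difference in approach.
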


The main theorem of the paper is the following:
\begin{thm}\label{main-thm} 
Let $\lambda_1, \lambda_2\in P^+$, then we have $ a_{\lambda_1, \lambda_2}^{\tau} \geq c_{\lambda_1, \lambda_2}^{\tau}$, $\forall \; \tau \in P^+$.\\ Moreover:
\begin{enumerate}
\item (Pieri rules) Let $\lambda_1 \in P^+$, $\lambda_2 \in \{ \omega_j, k \omega_1\}$ for some $j \in I$ or $k  \geq 1$, then:  $a_{\lambda_1, \lambda_2}^{\tau} = c_{\lambda_1, \lambda_2}^{\tau}$, $\, \forall \, \tau \in P^+$.
\item  Let $\lambda_1 = m_i \omega_i, \lambda = m_j \omega_j$ for some $i, j \in I, m_i, m_j \geq 0$, then: $a_{\lambda_1, \lambda_2}^{\tau} = c_{\lambda_1, \lambda_2}^{\tau}$, $\, \forall \, \tau \in P^+$.
\item If $\lambda_1 \gg \lambda_2$, then: $a_{\lambda_1, \lambda_2}^{\tau} = c_{\lambda_1, \lambda_2}^{\tau}$, $\, \forall \, \tau \in P^+$.
\end{enumerate}
\end{thm}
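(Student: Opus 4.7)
The plan is to bound $a_{\lambda_1,\lambda_2}^\tau$ from below by $c_{\lambda_1,\lambda_2}^\tau$ for all pairs, and then to match the bound in each of the three special cases via an upper-bound argument coming from the PBW filtration. For the lower bound, I would construct a surjection from $F_{\lambda_1,\lambda_2}$ onto the fusion product $V(\lambda_1)_{c_1}\ast V(\lambda_2)_{c_2}$ for a pair of distinct evaluation parameters $c_1,c_2\in\bc$. Using Proposition~\ref{relation}, it suffices to verify the defining relations on the cyclic vector of the fusion product; the only nontrivial check is that $(f_\alpha\otimes t)^{\min\{\lambda_1(h_\alpha),\lambda_2(h_\alpha)\}+1}$ annihilates the cyclic vector, which reduces to the $\lie{sl}_2$-calculation recorded in Lemma~\ref{fusion-sl2} after restricting to the $\lie{sl}_2$-triple attached to $\alpha$. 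Because the fusion product is $\lie{sl}_n$-isomorphic to $V(\lambda_1)\otimes V(\lambda_2)$, this surjection immediately yields $a_{\lambda_1,\lambda_2}^\tau\geq c_{\lambda_1,\lambda_2}^\tau$ for every $\tau$.

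For the matching upper bound in each special case, the common framework is provided by Corollary~\ref{span-m}. Since $F_{\lambda_1,\lambda_2}\cong U(\lie{n}^-)S(\lie{n}^-).\mathbbm{1}$ as a vector space, the module is spanned by PBW monomials indexed by integer points of a Dyck-path polytope, with one $\mathbb{R}^{R^+}$ factor recording the degree-$0$ contribution and another the degree-$1$ contribution. The $\lie{sl}_n$-highest weight vectors of weight $\tau$ lie in a subspace whose dimension is controlled by the lattice-point count of a weight-$(\lambda_1+\lambda_2-\tau)$ slice of this polytope, so the task in each of the three cases reduces to matching that count against $c_{\lambda_1,\lambda_2}^\tau$.

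Part~(3), where $\lambda_1+w(\lambda_2)$ is dominant for every Weyl group element $w$, should be the most accessible: in this stable range $\min\{\lambda_1(h_\alpha),\lambda_2(h_\alpha)\}=\lambda_2(h_\alpha)$ for every positive root, so the polytope essentially factors into a degree-$0$ piece governed by $\lambda_1$ and a degree-$1$ piece governed by $\lambda_2$, and the LR coefficients collapse to a pure product-of-characters count that is visibly reproduced. Part~(1), the Pieri case $\lambda_2\in\{\omega_j,k\omega_1\}$, is also tractable: the small shape of $\lambda_2$ collapses the $t$-component of the polytope into an easily enumerated object which can be matched directly against the classical Pieri rule. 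The main obstacle, and the heart of the argument, will be Part~(2): with both factors $m_i\omega_i$ and $m_j\omega_j$ rectangular, the polytope is genuinely two-dimensional in a nontrivial way, and matching its weighted lattice-point count with the Littlewood--Richardson coefficients for a product of two rectangles will demand the full strength of the PBW filtration theorems of \cite{FFoL11a,FFoL13a}, combined with a careful weight-by-weight decomposition of the polytope.
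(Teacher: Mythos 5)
Your lower-bound argument is the paper's (Lemma~\ref{whatever}): verify the defining relations of $F_{\lambda_1,\lambda_2}$ on the cyclic vector of the fusion product, with the $(f_\alpha\otimes t)^{m+1}$ relation checked by restriction to the $\lie{sl}_2$-triple attached to $\alpha$, and the identification $V(\lambda_1)_{c_1}\ast V(\lambda_2)_{c_2}\cong_{\lie{sl}_n}V(\lambda_1)\otimes V(\lambda_2)$ then gives $a^\tau\geq c^\tau$. That part is sound.

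The upper-bound plan has a structural problem. You treat Corollary~\ref{span-m} as if the lattice-point count of the Dyck-path polytope $S(\lambda_1,\lambda_2)$ (in weight slices) directly bounds $a^\tau_{\lambda_1,\lambda_2}$, and you then propose to match that count to $c^\tau_{\lambda_1,\lambda_2}$ in each case. First, a small misreading: the polytope lives in a single $\mathbb{R}^{n(n-1)/2}$ and records only the $\lie n^-\otimes t$ component, not a product of a degree-$0$ and a degree-$1$ factor; the degree-$0$ part is absorbed by acting with $U(\lie n^-)$. More importantly, the relevant bound is via the \emph{highest-weight points} $S_{hw}(\lambda_1,\lambda_2)\subsetneq S(\lambda_1,\lambda_2)$ of Corollary~\ref{cor-upper}, and for parts (1) and (2) the full polytope genuinely overcounts, so one must do real work to bound $S_{hw}$. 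In part (3) this is invisible because FFoL11a Theorem~B makes the polytope count equal $\dim V(\lambda_2)_{\tau-\lambda_1}=c^\tau$ on the nose, so the sandwich $c^\tau\leq a^\tau\leq|S\cap\{\text{weight slice}\}|=c^\tau$ closes without identifying $S_{hw}$ exactly; there your sketch is essentially correct modulo the loose language about the polytope ``factoring.''

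For part (2), however, your proposal that ``the full strength of the PBW filtration theorems of [FFoL11a,FFoL13a]'' plus a ``weight-by-weight decomposition of the polytope'' will carry the day is not an argument, and those theorems alone do not suffice. The paper's proof of part (2) hinges on a new device not present in the cited PBW papers: because $\lambda_1=m_i\omega_i$ and $\lambda_2=m_j\omega_j$ vanish on many roots, the subalgebra $\lie n^-_{\lambda_1,\lambda_2}=\langle f_\alpha:\lambda_1(h_\alpha)=\lambda_2(h_\alpha)=0\rangle$ annihilates $\mathbbm{1}$, and this yields new differential operators $f_\alpha\circ(-)$ on $U(\lie n^-\otimes t).\mathbbm{1}$. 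These operators are used in Lemma~\ref{diagonal} to show $S_{hw}$ is concentrated on the anti-diagonal supports $\{(i-q,j+q)\}$, after which the dominance condition pins down the count and matches the rectangle-times-rectangle Littlewood--Richardson rule. Without this mechanism you cannot cut $S_{hw}$ down from the full polytope, and your plan has a genuine gap here. Part (1) similarly needs extra relations beyond the polytope (for $k\omega_1$, the relation $f_j^{s_j}.\mathbbm{1}=0$; for $\omega_j$, the support-shape analysis matched to the tableaux parametrization from \cite{N2002}); you correctly guess these cases are tractable, but you would need to name and use these specific inputs to complete the argument.
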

The proofs will be given in the following sections, but we should note the following here:
\begin{rem}\label{first-rem} In the proof we will see that $\lambda_1 \gg \lambda_2$ can be made precise, by requesting
 \[
c_{\lambda_1, \lambda_2}^{\tau} = \dim V(\lambda_2)_{\tau - \lambda_1}
\]
for all $\tau \in P^+$. Note that this is equivalent to $\lambda_1 + w(\lambda_2) \in P^+$ for all $w \in W$, the Weyl group of $\lie{sl}_n$.
\end{rem}
\begin{rem}
From the work \cite{CV13, V13} one can deduce further that $a_{\lambda_1, \lambda_2}^{\tau} = c_{\lambda_1, \lambda_2}^{\tau}$ if $\lambda_1 = m \omega_i$ and $\lambda_2(h_\theta) \leq m$ (where $\theta$ is the highest root of $\lie{sl}_n$). The authors were using relations on Demazure modules and their fusion products, generalizing an approach presented in \cite{FoL06}. This of course includes $(2)$ of the theorem but we give a new proof here that might be generalized to other but rectangular weights.
\end{rem}


\section{PBW filtration and polytopes}\label{PBW}
In this section we recall the PBW filtration and we will see how the results from \cite{FFoL11a} can be adapted here in order to understand the $\lie{sl}_{n}$-structure on $F_{\lambda_1, \lambda_2}$.\\
By the PBW theorem and the construction of $F_{\lambda_1, \lambda_2}$ as an induced module we know that
\[ F_{\lambda_1, \lambda_2} = U(\lie{n^-})U(\lie{n^-} \otimes t). \mathbbm{1}.\]
In order to understand the $\lie{sl}_{n}$-decomposition of $ F_{\lambda_1, \lambda_2}$ it would be sufficient to parametrize all $\lie{sl}_{n}$-highest weight vectors.
The equation above suggests, that this set of highest weight vectors should be controlled by $U(\lie{n^-} \otimes t). \mathbbm{1}$. We start with analyzing this.

\subsection{}
We recall the notion of Dyck path from \cite{FFoL11a}: \\
A Dyck path of length $s$ is a sequence of positive roots
\[
\mathbf{p} = (\beta_1, \ldots, \beta_s)
\]
with $s\geq 1$ and such that if $\beta_i = \alpha_{k,\ell}$ then $\beta_{i+1} \in \{ \alpha_{k+1, \ell}, \alpha_{k, \ell +1} \}$. If $\beta_1 = \alpha_{k_1, \ell_1}$ and $\beta_s = \alpha_{k_s, \ell_s}$, then we call 
\[
\alpha_{k_1, \ell_s} \text{ the base root of the path } \mathbf{p}, \text{ denoted by } \beta(\mathbf{p})
\]
Denote the set of all Dyck paths by $\mathbb{D}$.

\subsection{}
The PBW filtration on $U(\lie{n}^-)$ is given as follows:
\[
U(\lie{n}^-)^{\leq s} = \left\langle x_1 \cdots x_r \, | \, 0 \leq r\leq s \text{ and } x_i \in \lie{n}^-  \right\rangle_{\bc}
\]
The associated graded algebra is a commutative algebra isomorphic to $S(\lie n^-)$, the polynomial ring in $\lie{n}^-$. The adjoint action of $\lie n^+$ on $\lie{sl}_n$ induces an action $\circ$ on $S(\lie n^-)$.\\
We fix a tuple of non-negative integers
\[
\mathbf{a} := (a_\alpha) \in \bz_{\geq 0}^{n(n-1)/2}
\]
and consider the ideal $\mathcal{I}(\mathbf{a}) \subset S(\lie n^-)$ given by
\[
\mathcal{I}(\mathbf{a}) = S(\lie n^-)\left\langle \sum_{\alpha \in R^+}  U(\lie n^+) \circ f_\alpha^{a_{\alpha}+1} \right\rangle.
\]

\subsection{}
We fix $\mathbf{a} = (a_\alpha)$ and define a polytop in $ \mathbb{R}^{n(n-1)/2}$:
\[ \mathcal{P}(\mathbf{a}) = \left\{ (x_\alpha) \in \mathbb{R}^{n(n-1)/2} \, | \, \forall \; \mathbf{p} \in \mathbb{D}: \sum_{ \alpha \in \mathbf{p}} x_\alpha \leq a_{\beta(\mathbf{p})}\right\} .\]
We denote 
\[
S(\mathbf{a}) = \mathcal{P}(\mathbf{a}) \cap  \mathbb{Z}_{\geq 0}^{n(n-1)/2}
\]
the set of integer points in $ \mathcal{P}(\mathbf{a})$.\\
This construction of the polytope covers the cases considered in \cite{FFoL11a, FFoL11b, FFoL13, FFoL13a, G11, BD14}, where $a_\alpha := \lambda(h_\alpha)$ for some fixed $\lambda \in P^+$.\\
\noindent
We define further the degree and the weight of an integer point: Let $\mathbf{s} = (s_\alpha) \in  \mathbb{Z}_{\geq 0}^{n(n-1)/2}$, then
\[ \deg (\mathbf{s}) = \sum_{\alpha \in R^+} s_\alpha \text{ and }  \wt (\mathbf{s}) = \sum_{\alpha \in R^+} s_\alpha \alpha \in P. \]

\subsection{}
Although, our approach generalizes the construction provided in \cite{FFoL11a}, we obtain  a similar result on a spanning set of $S(\lie n^-)/ \mathcal{I}(\mathbf{a})$ (see \cite[Theorem 2]{FFoL11a}). For this denote
\[
\mathbf{f}^{\mathbf{t}} = \prod_{\alpha \in R^+} f_\alpha^{t_\alpha} \in S(\lie{n}^-) \text{ where } \mathbf{t} = (t_\alpha) \in  \mathbb{Z}_{\geq 0}^{n(n-1)/2}.
\]
\begin{lem}\label{span-set}
We fix  $\mathbf{a} = (a_\alpha) \in  \mathbb{Z}_{\geq 0}^{n(n-1)/2}$, then
\[
\{ \overline{\mathbf{f}^{\mathbf{s}}} \; | \;\mathbf{s} \in S(\mathbf{a}) \}
\]
is a spanning set of $S(\lie n^-)/ \mathcal{I}(\mathbf{a})$.
\end{lem}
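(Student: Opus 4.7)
The plan is to adapt the proof of \cite[Theorem 2]{FFoL11a} essentially verbatim. That proof does not use the specific choice $a_\alpha=\lambda(h_\alpha)$ but only the shape of the generators $f_\alpha^{a_\alpha+1}$ and the Dyck path combinatorics, so the argument should transfer to the more general setting of arbitrary $\mathbf{a}\in\mathbb{Z}_{\geq 0}^{n(n-1)/2}$.

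First I would fix a homogeneous lexicographic order $\prec$ on the monomials $\mathbf{f}^{\mathbf{t}}\in S(\lie{n}^-)$, induced from a total order on $R^+$ that refines the partial order and is compatible with Dyck paths (as in \cite{FFoL11a}, for instance the order on $\alpha_{i,j}$ used there). The key technical step is a straightening lemma: for every Dyck path $\mathbf{p}=(\beta_1,\dots,\beta_s)$ with base root $\beta(\mathbf{p})$, and for every tuple $(x_{\beta_i})$ of non-negative integers with $\sum_i x_{\beta_i}=a_{\beta(\mathbf{p})}+1$, there is an element of $\mathcal{I}(\mathbf{a})$ whose leading monomial with respect to $\prec$ is exactly $\prod_i f_{\beta_i}^{x_{\beta_i}}$. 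Such an element is constructed by applying a suitable product of simple root vectors $e_i$ from $U(\lie{n}^+)$ to $f_{\beta(\mathbf{p})}^{a_{\beta(\mathbf{p})}+1}$: each step of the Dyck path, which either increases the first index or the second index of the underlying root, is realized by the action of the corresponding $e_i$, redistributing the power among the $f_{\beta_i}$'s along the path.

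Granted these straightening relations, the spanning statement follows by a standard reduction argument. If $\mathbf{t}\notin S(\mathbf{a})$, then there is some Dyck path $\mathbf{p}$ with $\sum_{\alpha\in\mathbf{p}} t_\alpha > a_{\beta(\mathbf{p})}$; restricting to a minimal such violating subconfiguration of $\mathbf{t}$ and combining with the straightening relation above, one rewrites $\mathbf{f}^{\mathbf{t}}$, modulo $\mathcal{I}(\mathbf{a})$, as a linear combination of monomials $\mathbf{f}^{\mathbf{t}'}$ with $\mathbf{t}'\prec\mathbf{t}$ and $\wt(\mathbf{t}')=\wt(\mathbf{t})$. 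Since $\prec$ is well-founded on monomials of bounded total degree, induction concludes that $\overline{\mathbf{f}^{\mathbf{t}}}$ lies in the span of $\{\overline{\mathbf{f}^{\mathbf{s}}}\mid \mathbf{s}\in S(\mathbf{a})\}$.

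The main obstacle is verifying that the straightening relations have exactly the claimed initial terms, which is the combinatorial heart of \cite{FFoL11a}. What needs checking in the present setting is that the identification of the leading term uses only the fact that the generator is $f_{\beta(\mathbf{p})}^{a_{\beta(\mathbf{p})}+1}$, and in particular is independent of whether $a_{\beta(\mathbf{p})}$ equals $\lambda(h_{\beta(\mathbf{p})})$ for some weight $\lambda$ or is an arbitrary non-negative integer. Once that independence is noted, the rest of the argument goes through unchanged.
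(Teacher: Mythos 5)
Your proposal follows the same route as the paper: invoke the differential-operator action of $\lie{n}^+$ on $S(\lie{n}^-)$, produce straightening relations from $U(\lie{n}^+)\circ f_{\beta(\mathbf{p})}^{a_{\beta(\mathbf{p})}+1}$ with controlled leading terms, and conclude by downward induction along the monomial order. You also correctly identify the one point the paper leaves implicit — that the straightening argument of \cite[Proposition~1]{FFoL11a} depends only on the exponents $a_\alpha+1$ and the Dyck path combinatorics, not on the specific choice $a_\alpha=\lambda(h_\alpha)$ — so the proposal matches the paper's (very terse) proof in both substance and strategy.
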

\begin{proof}Here we follow the idea in \cite{FFoL11a}. $\lie{n}^+$ acts by differential operators on $S(\lie{n}^-)$, namely $e_\alpha \circ f_\beta = f_{\beta - \alpha}$ or $0$ if $\beta - \alpha$ is  a positive root. Using these differential operators and an appropriate total order $\prec$ on the monomials in $S(\lie{n}^-)$, we can prove in exactly the same way as \cite[Proposition 1]{FFoL11a} a straightening law. Namely if $\mathbf{s} \notin S(\mathbf{a})$, then
\[
\overline{\mathbf{f}^{\mathbf{s}}} = \overline{\sum_ { \mathbf{t} \prec \mathbf{s} } c_{\mathbf{t}} \mathbf{f}^{\mathbf{t}}}.
\]
This implies now the lemma. For more details we refer to \cite{FFoL11a}.
\end{proof}

In \cite{FFoL11a}, and the case $a_\alpha := \lambda(h_\alpha)$, for some fixed $\lambda \in P^+$, it was further proved that this set is in fact a basis. We can not prove this here and although we conjecture that this is also true in our generality.

\subsection{}
By construction  $M_{\lambda_1, \lambda_2}$ is a cyclic $U(\lie{n^-} \otimes t)$-module. So there exists an ideal $\mathcal{I}_{\lambda_1, \lambda_2}$ such that
\[ M_{\lambda_1, \lambda_2} \cong U(\lie{n^-} \otimes t)/\mathcal{I}_{\lambda_1, \lambda_2}.\]
Since $M_{\lambda_1, \lambda_2}$ is a $\lie{n}^+$-module, the ideal  $\mathcal{I}_{\lambda_1, \lambda_2}$ is stable under the adjoint action of $\lie{n}^+$ (on $U(\lie{n}^- \otimes t)$). 
Moreover the action is a graded action (where $\lie{n}^+$ has degree $0$). Note that we have the identification
\[
S(\lie n^-) \cong U(\lie{n}^-  \otimes t) \subset U(\lie{sl}_n \otimes \bc[t]/(t^2))  \text{ via } \mathbf{f}^{\mathbf{t}} \mapsto \prod_{\alpha} (f_\alpha \otimes t)^{t_\alpha}.
\]
Then we have the obvious proposition:
\begin{prop}\label{prop-maps} For $\lambda_1, \lambda_2 \in P^+$ we set  
\[
\mathbf{a} := (a_\alpha) \text{ where } a_\alpha = \min\{\lambda_1(h_\alpha), \lambda_2(h_\alpha)\}.
\] 
Then we have maps of $S(\lie n^-)$-modules
\[ S(\lie n^-) / \mathcal{I}(\mathbf{a}) \twoheadrightarrow M_{\lambda_1, \lambda_2} \twoheadrightarrow U(\lie n^- \otimes t).\mathbbm{1} \subset F_{\lambda_1, \lambda_2}.\]
\end{prop}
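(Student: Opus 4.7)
The plan is to unwind the definitions and reduce everything to the observation that the derivation action $\circ$ of $\lie{n}^+$ on $S(\lie{n}^-)$ coincides with the adjoint action inherited from the current algebra. First I would identify $M_{\lambda_1,\lambda_2}$ explicitly as a cyclic $S(\lie{n}^-)$-module. By PBW the induced module $\Ind_{\lie{b}\otimes\bc[t]}^{\lie{b}\otimes 1\oplus \lie{sl}_n\otimes t\bc[t]}\bc_{\lambda_1+\lambda_2}$ is free of rank one over $U(\lie{n}^-\otimes t\bc[t])$ with generator $\mathbbm{1}$. Imposing the relation $\lie{n}^-\otimes t^2\bc[t].\mathbbm{1}=0$ kills the internal brackets, since $[\lie{n}^-\otimes t,\lie{n}^-\otimes t]\subset\lie{n}^-\otimes t^2\bc[t]$, so the remaining action factors through $U(\lie{n}^-\otimes t)$; via the identification $U(\lie{n}^-\otimes t)\cong S(\lie{n}^-)$ used in the excerpt, this yields $M_{\lambda_1,\lambda_2}\cong S(\lie{n}^-)/J$ for some ideal $J\subset S(\lie{n}^-)$ containing every $f_\alpha^{a_\alpha+1}$.

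The second step is to show $\mathcal{I}(\mathbf{a})\subset J$. Since $\bc_{\lambda_1+\lambda_2}$ is a $\lie{b}$-module killed by $\lie{n}^+$, the generator $\mathbbm{1}\in M_{\lambda_1,\lambda_2}$ is annihilated by $\lie{n}^+\otimes 1$. For $e\in\lie{n}^+\otimes 1$ and $x\in U(\lie{n}^-\otimes t)$ one then computes $e.(x.\mathbbm{1})=[e,x].\mathbbm{1}+x.(e.\mathbbm{1})=[e,x].\mathbbm{1}$, and under the identification above the commutator $[e,x]$ is precisely $e\circ x$, since $[e_\gamma\otimes 1,f_\alpha\otimes t]=[e_\gamma,f_\alpha]\otimes t$ matches the defining formula of $\circ$ given in the paper. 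Taking $x=f_\alpha^{a_\alpha+1}$ and iterating over $U(\lie{n}^+)$ shows every element of $U(\lie{n}^+)\circ f_\alpha^{a_\alpha+1}$ annihilates $\mathbbm{1}$; closing up under the $S(\lie{n}^-)$-action yields $\mathcal{I}(\mathbf{a})\subset J$, hence the first surjection.

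For the second arrow I would use that $F_{\lambda_1,\lambda_2}$ is by construction a quotient of $\Ind_{\lie{b}\otimes 1\oplus\lie{sl}_n\otimes t\bc[t]}^{\lie{sl}_n\otimes\bc[t]}M_{\lambda_1,\lambda_2}$. The canonical inclusion $M_{\lambda_1,\lambda_2}\hookrightarrow\Ind M_{\lambda_1,\lambda_2}$ composed with the quotient to $F_{\lambda_1,\lambda_2}$ is a morphism of $(\lie{b}\otimes 1\oplus\lie{sl}_n\otimes t\bc[t])$-modules sending $\mathbbm{1}\mapsto\mathbbm{1}$; in particular it is $S(\lie{n}^-)$-linear, and its image is the $S(\lie{n}^-)$-submodule generated by $\mathbbm{1}$, which is exactly $U(\lie{n}^-\otimes t).\mathbbm{1}\subset F_{\lambda_1,\lambda_2}$.

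I do not foresee a serious obstacle; the proposition is essentially a repackaging of the defining relations and the universal property of the induced module. The only point deserving care is matching the commutator in the current algebra with the derivation action $\circ$ on $S(\lie{n}^-)$ under the identification $U(\lie{n}^-\otimes t)\cong S(\lie{n}^-)$, but this is immediate from the bracket formulas.
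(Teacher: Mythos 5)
The paper offers no proof at all for Proposition~\ref{prop-maps}; the author introduces it with ``Then we have the obvious proposition,'' leaving the verification entirely to the reader. Your write-up supplies exactly the natural unwinding that makes it obvious, and the overall argument is correct: PBW identifies the induced module as free of rank one over $U(\lie{n}^-\otimes t\bc[t])$; killing $\lie{n}^-\otimes t^2\bc[t]$ (together with $\lie{b}\otimes t\bc[t]$ from the inducing module) lets the action factor through $U(\lie{n}^-\otimes t)\cong S(\lie{n}^-)$, so $M_{\lambda_1,\lambda_2}\cong S(\lie{n}^-)/J$; comparing $\lie{n}^+$-stability of $J$ with the generators of $\mathcal I(\mathbf a)$ gives the first surjection; and the Frobenius unit $M_{\lambda_1,\lambda_2}\hookrightarrow\Ind M_{\lambda_1,\lambda_2}$ followed by the quotient onto $F_{\lambda_1,\lambda_2}$ gives the second.

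One place where you slightly oversimplify: the equality ``$[e,x]$ is precisely $e\circ x$'' is not literally true in $U$. The bracket $[e_\gamma\otimes 1, f_\alpha\otimes t]=[e_\gamma,f_\alpha]\otimes t$ has components not only in $\lie{n}^-\otimes t$ but also possibly in $\lie{h}\otimes t$ or $\lie{n}^+\otimes t$ (e.g. $[e_\alpha,f_\alpha]=h_\alpha$), whereas $\circ$ retains only the $\lie{n}^-$-projection. What makes your claim work is that (i) the $\lie{b}\otimes t\bc[t]$-components annihilate $\mathbbm{1}$ directly, and (ii) when moving such a component rightward past further $f_\beta\otimes t$ factors, the resulting brackets land in $\lie{sl}_n\otimes t^2\bc[t]$, which also annihilates $\mathbbm{1}$ (as $\lie{n}^-\otimes t^2$ is killed by definition and $\lie{b}\otimes t^2\bc[t]$ already acts trivially). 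It is worth saying this explicitly, since it is the one genuinely non-formal check in the argument; with that sentence added, the proof is complete and is essentially what the paper intends.
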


To emphasize the dependence on $\lambda_1, \lambda_2$, we denote the set of integer points $S(\mathbf{a})$ in this case by $S(\lambda_1, \lambda_2)$. Then, combining Proposition~\ref{prop-maps} and Lemma~\ref{span-set}, we have:
\begin{coro}\label{span-m}
Fix $\lambda_1, \lambda_2 \in P^+$, then 
\[
\{ \mathbf{f}^{\mathbf{s}}.\mathbbm{1} \; | \; \mathbf{s} \in S(\lambda_1, \lambda_2) \}
\]
 is, via the identification, a spanning set for $M_{\lambda_1, \lambda_2}$ and hence for $U(\lie n^- \otimes t).\mathbbm{1} \subset F_{\lambda_1, \lambda_2}$.
\end{coro}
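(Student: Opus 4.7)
The plan is simply to assemble the corollary from its two immediate antecedents: Lemma~\ref{span-set} furnishes a spanning set for $S(\lie n^-)/\mathcal{I}(\mathbf{a})$, and Proposition~\ref{prop-maps} pushes that set forward along a chain of surjections onto the modules in question. Since the image of a spanning set under a surjection is again a spanning set, the argument is essentially transport of structure and there is no real obstacle to overcome.

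Concretely, I would fix $\mathbf{a} = (a_\alpha)$ with $a_\alpha = \min\{\lambda_1(h_\alpha), \lambda_2(h_\alpha)\}$, so that $S(\mathbf{a})$ coincides by definition with $S(\lambda_1, \lambda_2)$. Lemma~\ref{span-set}, applied to this particular $\mathbf{a}$, says that the classes $\{\overline{\mathbf{f}^{\mathbf{s}}} \mid \mathbf{s} \in S(\lambda_1,\lambda_2)\}$ span $S(\lie n^-)/\mathcal{I}(\mathbf{a})$ as a vector space. Under the identification
\[
S(\lie n^-) \;\cong\; U(\lie n^- \otimes t), \qquad \mathbf{f}^{\mathbf{t}} \;\longleftrightarrow\; \prod_{\alpha \in R^+}(f_\alpha \otimes t)^{t_\alpha},
\]
used in Proposition~\ref{prop-maps}, the first surjection in that proposition sends $\overline{\mathbf{f}^{\mathbf{s}}}$ to $\mathbf{f}^{\mathbf{s}}.\mathbbm{1} \in M_{\lambda_1,\lambda_2}$, so these vectors span $M_{\lambda_1,\lambda_2}$; composing with the second surjection yields the desired spanning set for $U(\lie n^- \otimes t).\mathbbm{1} \subset F_{\lambda_1,\lambda_2}$.

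The only point that might deserve a sentence of bookkeeping is why $\mathcal{I}(\mathbf{a})$ does indeed land in the kernel of the natural map $S(\lie n^-) \twoheadrightarrow M_{\lambda_1,\lambda_2}$, but this is exactly the content of Proposition~\ref{prop-maps}: the generators $f_\alpha^{a_\alpha+1}$ become $(f_\alpha \otimes t)^{a_\alpha+1}$, which annihilate $\mathbbm{1}$ by construction of $M_{\lambda_1,\lambda_2}$, and the $U(\lie n^+)$-orbit of these generators is killed because the adjoint $\lie n^+$-action on $U(\lie n^- \otimes t)$ corresponds, under the identification above, to the derivation $\circ$ used to define $\mathcal{I}(\mathbf{a})$, and $\mathbbm{1}$ is annihilated by $\lie n^+ \otimes \bc[t]$. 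With this observation in hand, there is nothing further to prove.
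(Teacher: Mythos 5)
Your argument is exactly the paper's: the corollary is stated there as an immediate consequence of combining Lemma~\ref{span-set} with the chain of surjections in Proposition~\ref{prop-maps}, with $\mathbf{a}$ specialized to $a_\alpha = \min\{\lambda_1(h_\alpha),\lambda_2(h_\alpha)\}$ so that $S(\mathbf{a}) = S(\lambda_1,\lambda_2)$. Your additional bookkeeping remark about why $\mathcal{I}(\mathbf{a})$ lies in the kernel is correct and is indeed subsumed in Proposition~\ref{prop-maps}.
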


\subsection{}
In order to identify the $\lie{sl}_n$-highest weight vectors in $F_{\lambda_1, \lambda_2}$ with images of  $\prod_{\alpha} (f \otimes t)^{s_\alpha} .\mathbbm{1}$ for some $\mathbf{s} \in S(\lambda_1, \lambda_2)$, we introduce an appropriate filtration of $U(\lie n^- \otimes t)$. First we filter by the degree of $t$ and the further by the height of the weights. Finally, we filter further by a total order on the monomials.\\

Recall that $U(\lie n^- \otimes t) \cong S(\lie{n}^-)$ if considered as the subalgebra in $U(\lie{sl}_n \otimes \bc[t]/t^2)$ as we continue to do. Therefore  $U(\lie n^- \otimes t)$ is naturally graded by $t$ and we keep denoting the graded components  $U(\lie n^- \otimes t)^s$. For $\tau \in P$, we denote
\[
U(\lie n^- \otimes t)_\tau = \{ v  \in  U(\lie n^- \otimes t) \ | \ \wt(v) = \tau \}.
\]
All weights of $U(\lie{n^-} \otimes t)$ are in $\bigoplus_{i  \in I} \bz_{\leq 0} \alpha_i$. Let $\tau = \sum_{i \in I} a_ i \alpha_I  \in \bigoplus_{i  \in I} \bz_{\leq 0} \alpha_i$. Then we denote the height of $\tau$
\[
\hei(\tau) := \sum_{i \in I} - a_i.
\] 
So we have a filtration of the graded components
\[
 U(\lie n^- \otimes t)^{s, \leq \ell} = \langle  u \in  U(\lie n^- \otimes t)^s_\tau \ | \ \hei(\tau) \leq \ell \}.
\]
This is spanned by monomials of total degree $s$ and whose weights have height less or equals to $\ell$. \\
On the other hand, $U(\lie n^- \otimes t)$ is $\bz_{\geq 0}^{n(n-1)/2}$ graded. Each graded component is one-dimensional, spanned by $\prod_{\alpha \in R^+} (f_\alpha \otimes t)^{s_\alpha}$ for some $\mathbf{s} \in \bz_{\geq 0}^{n(n-1)/2}$. We order the $n(n-1)/2$-tuples by first ordering the positive roots
\[ \alpha_{i,j} \leq \alpha_{k,\ell} :\Leftrightarrow i <  k \text{ or }  i = k \text{ and } j \leq \ell .\]
Using the lexicographic order $\leq$ we obtain an order on the monomials spanning $U(\lie n^- \otimes t)$.\\
Combining this we introduce a finer filtration on $U(\lie n^- \otimes t)^s$. So given $s, \ell \geq 0$ and $\mathbf{n}\in \bz_{\geq 0}^{n(n-1)/2}$ with $\deg (\mathbf{n}) = s$, $\hei(-\wt(\mathbf{n})) = \ell$, we have
\[
 U(\lie n^- \otimes t)^{s, \leq \ell}_{\leq  \mathbf{n}} =  U(\lie n^- \otimes t)^{s, < \ell} + \left\langle \prod_{\alpha \in R^+} (f_\alpha \otimes t)^{m_\alpha} \ | \ \deg(\mathbf{m}) = s \, , \, \hei( -\wt \mathbf{m}) = \ell \, , \, \mathbf{m} < \mathbf{n} \right\rangle_{\bc}.
\]

\subsection{}
We turn back to the module $F_{\lambda_1, \lambda_2}$ and recall its graded components $F^s_{\lambda_1, \lambda_2}$. We define
\[
\mathcal{F}^{\leq \ell}(F_{\lambda_1, \lambda_2}^s) := U(\lie{sl}_n) U(\lie n^- \otimes t)^{s, \leq \ell} .\mathbbm{1} \subseteq F_{\lambda_1, \lambda_2}^{s}.
\]
By construction
\[
\mathcal{F}^{\leq \ell}(F_{\lambda_1, \lambda_2}^s) /\mathcal{F}^{ < \ell}(F_{\lambda_1, \lambda_2}^s) 
\]
is a $\lie{sl}_n$-module and we have the following 
\begin{lem}
Let $\mathbf{s} \in S(\lambda_1, \lambda_2)$, then the image of 
\[
\mathbf{f}^{\mathbf{s}}.\mathbbm{1} \in \mathcal{F}^{\leq \ell}(F_{\lambda_1, \lambda_2}^s) /\mathcal{F}^{< \ell}(F_{\lambda_1, \lambda_2}^s)
\]
 is either $0$ or a $\lie{sl}_n$-highest weight vector of weight $\lambda_1 + \lambda_2 - \wt(\mathbf{s} )$.
\end{lem}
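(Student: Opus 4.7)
The plan is to verify the two properties that make the image a highest weight vector: first, that $\mathbf{f}^{\mathbf{s}}.\mathbbm{1}$ already has $\lie{h}$-weight $\lambda_1+\lambda_2-\wt(\mathbf{s})$, and second, that $(e_\gamma\otimes 1)\cdot\mathbf{f}^{\mathbf{s}}.\mathbbm{1}$ lies in $\mathcal{F}^{<\ell}(F_{\lambda_1,\lambda_2}^s)$ for every positive root $\gamma$. The weight calculation is immediate from Proposition~\ref{relation}: $\mathbbm{1}$ has weight $\lambda_1+\lambda_2$ and each factor $f_\alpha\otimes t$ lowers the $\lie{h}$-weight by $\alpha$, so $\mathbf{f}^{\mathbf{s}}.\mathbbm{1}$ has weight $\lambda_1+\lambda_2-\sum_\alpha s_\alpha\alpha=\lambda_1+\lambda_2-\wt(\mathbf{s})$, and this persists in any quotient.

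For the annihilation, fix $\gamma\in R^+$ and commute $e_\gamma\otimes 1$ through the product $\prod_\alpha(f_\alpha\otimes t)^{s_\alpha}$. The term in which $e_\gamma\otimes 1$ passes straight through to $\mathbbm{1}$ vanishes since $\lie{n}^+\otimes 1.\mathbbm{1}=0$. Every other contribution arises from replacing one factor $f_\beta\otimes t$ by the bracket $[e_\gamma,f_\beta]\otimes t$, which is nonzero in three mutually exclusive situations: (a) $\beta=\gamma$, producing $h_\gamma\otimes t$; (b) $\gamma-\beta\in R^+$, producing a scalar multiple of $e_{\gamma-\beta}\otimes t$; (c) $\beta-\gamma\in R^+$, producing a scalar multiple of $f_{\beta-\gamma}\otimes t$. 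In cases (a) and (b), I would use that $F_{\lambda_1,\lambda_2}$ is a $\lie{sl}_n\otimes\bc[t]/(t^2)$-module (Proposition~\ref{firstprop}(5)): once $h_\gamma\otimes t$ or $e_{\gamma-\beta}\otimes t$ is produced, any further bracket with a remaining $f_\delta\otimes t$ lands in $\lie{sl}_n\otimes t^2\bc[t]$, which acts as zero. Hence these elements can be slid to the far right and applied directly to $\mathbbm{1}$, where they are annihilated by $\lie{h}\otimes t\bc[t].\mathbbm{1}=0$ and $\lie{n}^+\otimes\bc[t].\mathbbm{1}=0$ respectively.

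In case (c), the resulting expression is again a monomial in $U(\lie{n}^-\otimes t)$ acting on $\mathbbm{1}$: the total $t$-degree is still $s$, but the weight contribution has changed from $\beta$ to $\beta-\gamma$, so the height of the weight drops by exactly $\hei(\gamma)\geq 1$. The resulting monomial therefore lies in $U(\lie{n}^-\otimes t)^{s,<\ell}$, and its contribution belongs to $\mathcal{F}^{<\ell}(F_{\lambda_1,\lambda_2}^s)$ by definition of the filtration. Summing over all positions at which the commutator is taken shows $(e_\gamma\otimes 1)\mathbf{f}^{\mathbf{s}}.\mathbbm{1}\in\mathcal{F}^{<\ell}$, so its image in the graded quotient is zero. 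I do not expect a genuine obstacle here: the proof is essentially bookkeeping, and the filtration by weight-height is designed so that case (c) — the only case surviving on $U(\lie{n}^-\otimes t).\mathbbm{1}$ — automatically gives the required strict decrease in $\ell$, while the $t^2$-truncation trivialises cases (a) and (b).
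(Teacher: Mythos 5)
Your proof is correct and follows essentially the same strategy as the paper's: commute $e_\gamma\otimes 1$ past the monomial and observe that every surviving commutator term either has strictly smaller weight-height (hence lands in $\mathcal{F}^{<\ell}$) or, thanks to the $t^2$-truncation, slides to the right and annihilates $\mathbbm{1}$. You spell out the three bracket cases more explicitly than the paper, which compresses them into a single displayed containment, but the underlying argument is identical.
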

\begin{proof} Since $\hei(e_\beta) > 0 $, we see using the commutator relations that
\[
e_{\beta} \prod_{\alpha}(f_\alpha \otimes t)^{s_{\alpha}} \in U(\lie n^- \otimes t)^{s, \leq \ell} U(\lie{n}^+)_+ + U(\lie{n}^- \otimes t)^{s, < \ell} U(\lie{n}^+).
\]
This implies that 
\[
e_{\alpha} \prod_{\alpha}(f_\alpha \otimes t)^{s_{\alpha}} = 0 \in \mathcal{F}^{\leq \ell}(F_{\lambda_1, \lambda_2}^s) /\mathcal{F}^{< \ell}(F_{\lambda_1, \lambda_2}^s).
\] 
\end{proof}

We see that, by choosing this appropriate filtration, the highest weight vectors (for the $\lie{sl}_n$-action) of the associated graded module $F_{\lambda_1, \lambda_2}$, are of the form $\mathbf{f}^{\mathbf{s}}.\mathbbm{1}$ for some $\mathbf{s}$. \\
By using the refinement of the filtration we can say even more. So given $s, \ell \geq 0$ and $\mathbf{n}\in \bz_{\geq 0}^{n(n-1)/2}$ with $\deg (\mathbf{n}) = s$, $\hei(-\wt(\mathbf{n})) = \ell$, we have
\[
\mathcal{F}_{\leq \mathbf{n}}^{\leq \ell}(F_{\lambda_1, \lambda_2}^s) := U(\lie{sl}_n) U(\lie n^- \otimes t)_{\leq \mathbf{n}}^{s, \leq \ell} .\mathbbm{1} \subset F_{\lambda_1, \lambda_2}^{s}.
\]
Then the graded components 
\[
\mathfrak{G}_{\mathbf{n}}^{s, \ell}(F_{\lambda_1, \lambda_2})  := \mathcal{F}_{\leq \mathbf{n}}^{\leq \ell}(F_{\lambda_1, \lambda_2}^s) / \left( \mathcal{F}^{<  \ell}(F_{\lambda_1, \lambda_2}^s) + \sum_{ \mathbf{m} < \mathbf{n} } \mathcal{F}_{\leq \mathbf{m}}^{\leq  \ell}(F_{\lambda_1, \lambda_2}^s) \right)
\]
are simple $\lie{sl}_n$-modules.

\subsection{}
We have seen in Corollary~\ref{span-m} that the monomials corresponding to points in $S(\lambda_1, \lambda_2)$ are a spanning set of $U(\lie n^{-} \otimes t). \mathbbm{1}$. 
\begin{defn}
We say $\mathbf{n} \in S(\lambda_1, \lambda_2)$ is a \textit{ highest weight point } if $\mathfrak{G}_{ \mathbf{n}}^{s, \ell}(F_{\lambda_1, \lambda_2}) $ is non-zero for $s = \deg(\mathbf{n})$ and $\ell = \hei(-\wt(\mathbf{n}))$. The set of highest weight points is denoted $S_{hw}(\lambda_1, \lambda_2)$.
\end{defn}

Note that, since $F_{\lambda_1, \lambda_2}$ is an integrable $\lie{sl}_n$-module, we have for all $\mathbf{s} \in S_{hw}(\lambda_1, \lambda_2)$
\[
\lambda_1  + \lambda _ 2 - \wt(\mathbf{s}) \in P^+.
\]

\begin{coro}\label{cor-upper} For $\lambda_1, \lambda_2, \tau \in P^+$ we have
\[
\dim \Hom_{\lie{sl}_n} (F_{\lambda_1, \lambda_2}, V(\tau)) = \sharp \{  \mathbf{s} \in S_{hw}(\lambda_1, \lambda_2) \, | \, \wt(\mathbf{s}) = \lambda_1 + \lambda_2 - \tau \}
\]
Moreover
\[
\dim \Hom_{\lie{sl}_n} (F^s_{\lambda_1, \lambda_2}, V(\tau)) = \sharp \{  \mathbf{s} \in S_{hw}(\lambda_1, \lambda_2) \, | \, \wt(\mathbf{s}) = \lambda_1 + \lambda_2 - \tau \text{ and } \deg( \mathbf{s}) = s.\}
\]
\end{coro}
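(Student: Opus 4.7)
My plan is to deduce the multiplicity formula directly from the finely graded filtration $\mathcal{F}^{\leq \ell}_{\leq \mathbf{n}}$ set up above: the $\lie{sl}_n$-character of $F_{\lambda_1, \lambda_2}$ equals the character of the associated graded, and the associated graded is (as already asserted in the excerpt) a direct sum of simple $\lie{sl}_n$-modules, one per element of $S_{hw}(\lambda_1,\lambda_2)$.

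The first step is to verify that the filtration exhausts $F_{\lambda_1,\lambda_2}$. By Proposition~\ref{firstprop}(5) we have $F_{\lambda_1,\lambda_2} = U(\lie{n}^-\otimes \bc[t]/(t^2)).\mathbbm{1} = U(\lie{n}^-) \cdot U(\lie{n}^-\otimes t).\mathbbm{1}$, and since $U(\lie{n}^-) \subset U(\lie{sl}_n)$ this yields
\[
F_{\lambda_1,\lambda_2} \;=\; U(\lie{sl}_n) \cdot U(\lie{n}^- \otimes t).\mathbbm{1} \;=\; \bigcup_{s, \ell, \mathbf{n}} \mathcal{F}^{\leq \ell}_{\leq \mathbf{n}}(F^s_{\lambda_1, \lambda_2}).
\]

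The second step is to identify each nonzero graded piece. By the preceding Lemma, the image of $\mathbf{f}^{\mathbf{n}}.\mathbbm{1}$ in $\mathfrak{G}^{s,\ell}_{\mathbf{n}}$ is either zero or an $\lie{sl}_n$-highest weight vector of weight $\lambda_1+\lambda_2-\wt(\mathbf{n})$; since this vector $\lie{sl}_n$-generates $\mathfrak{G}^{s,\ell}_{\mathbf{n}}$ and the ambient module is integrable, a nonzero $\mathfrak{G}^{s,\ell}_{\mathbf{n}}$ must be the simple module $V(\lambda_1+\lambda_2-\wt(\mathbf{n}))$. Moreover, by the straightening law in the proof of Lemma~\ref{span-set}, any monomial $\mathbf{f}^{\mathbf{n}}.\mathbbm{1}$ with $\mathbf{n} \notin S(\lambda_1, \lambda_2)$ is a combination of strictly smaller monomials of the same degree and weight, so it contributes trivially to the associated graded; hence only $\mathbf{n} \in S(\lambda_1, \lambda_2)$ are relevant and by definition these yield nonzero pieces precisely for $\mathbf{n} \in S_{hw}(\lambda_1, \lambda_2)$.

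Since finite-dimensional $\lie{sl}_n$-modules are semisimple, the $\lie{sl}_n$-character of $F_{\lambda_1, \lambda_2}$ equals that of its associated graded, so
\[
[F_{\lambda_1, \lambda_2}] \;=\; \sum_{\mathbf{n} \in S_{hw}(\lambda_1, \lambda_2)} \bigl[V(\lambda_1 + \lambda_2 - \wt(\mathbf{n}))\bigr],
\]
and reading off the multiplicity of $V(\tau)$ on both sides gives the first formula. For the graded refinement, observe that $\mathfrak{G}^{s,\ell}_{\mathbf{n}} \subset F^s_{\lambda_1,\lambda_2}$ precisely when $s = \deg(\mathbf{n})$, so restricting the same character argument to a fixed $t$-grade yields the second formula.

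The main obstacle I anticipate is verifying that the straightening law used in Lemma~\ref{span-set} respects both the $t$-grading and the weight grading, so that passing to the associated graded is compatible with the filtration indexed by $\mathbf{n}$; this also underpins the assertion (stated in the excerpt) that the $\mathfrak{G}^{s,\ell}_{\mathbf{n}}$ are themselves simple. Once these compatibilities are in place, the corollary is essentially bookkeeping, with the substantive work sitting in Lemma~\ref{span-set} rather than here.
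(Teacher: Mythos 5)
Your argument is correct and matches what the paper intends: the corollary is stated without proof, following the definition of $S_{hw}(\lambda_1,\lambda_2)$, and your write-up supplies exactly the bookkeeping that the paper leaves implicit (exhaustiveness of the filtration via $F_{\lambda_1,\lambda_2}=U(\lie{n}^-)U(\lie{n}^-\otimes t).\mathbbm{1}$, simplicity of the nonzero $\mathfrak{G}^{s,\ell}_{\mathbf{n}}$ as cyclic integrable $\lie{sl}_n$-highest-weight modules, and vanishing of $\mathfrak{G}^{s,\ell}_{\mathbf{n}}$ for $\mathbf{n}\notin S(\lambda_1,\lambda_2)$ via the straightening law). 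The compatibility concern you raise at the end — that the total order in the straightening law of \cite{FFoL11a} must be the one underlying the filtration $\mathcal{F}^{\leq\ell}_{\leq\mathbf{n}}$ — is a genuine point that the paper also glosses over, so it is well-placed as a caveat rather than a gap.
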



\section{Fusion products}\label{fusion}
In this section we recall the fusion product of two simple $\lie{sl}_{n}$-modules and work out the relation to the modules $F_{\lambda_1, \lambda_2}$.

\subsection{}
The following construction is due to \cite{FL99}. Recall the grading on $U(\lie{sl}_{n} \otimes \bc[t])$  given by the degree function on $\bc[t]$
\[
 U(\lie{sl}_n \otimes \bc[t]) ^r = \{ u \in U(\lie{sl}_n \otimes \bc[t]) \; | \; \deg(u) \leq r \}.
\]
Then $\mathcal{F}^{0} = U(\lie {sl}_n)$ and we set $\mathcal{F}^{-1} = 0$.\\
Let $V(\lambda_1), \ldots, V(\lambda_k)$ be simple $\lie{sl}_n$-modules of highest weights $\lambda_1, \ldots, \lambda_k$. Further let $c_1, \ldots, c_k$ be pairwise distinct complex numbers. Then $V(\lambda_i)$ can be endowed with the structure of a $\lie{sl}_n \otimes \bc[t]$-module via
\[
x \otimes p(t).v = p(c_i)x.v \; \; \text{ for all } x \in \lie{sl}_n, p(t) \in \bc[t], v \in V(\lambda_i),
\]
we denote this module $V(\lambda_i)_{c_i}$.
Then 
\[
V(\lambda_1)_{c_1} \otimes \cdots \otimes V(\lambda_k)_{c_k}
\]
is cyclic generated by the tensor product of highest weight vectors  $v_{\lambda_1} \otimes \cdots \otimes v_{\lambda_k}$ (even more it is simple \cite{Rao93, CFK10}). The grading on $U(\lie{sl}_n \otimes \bc[t])$ induces a filtration on $V(\lambda_1)_{c_1} \otimes \cdots \otimes V(\lambda_k)_{c_k}$
\[
U(\lie{sl}_n \otimes \bc[t])^{\leq r}. v_{\lambda_1} \otimes \cdots \otimes v_{\lambda_k}.
\]
Since $U(\lie{sl}_n \otimes \bc[t])$ is graded, the associated graded is again a module for $U(\lie{sl}_n \otimes \bc[t])$, denoted usually by
\[
V(\lambda_1)_{c_1} \ast \cdots \ast V(\lambda_k)_{c_k},
\]
and is called the \textit{fusion product}.  Recall that the graded components are $U(\lie {sl}_n)$-modules, since $U(\lie {sl} \otimes 1)$ is the degree $0$ component of $U(\lie{sl}_n \otimes \bc[t])$. Further, since we have not changed the $\lie{sl}_n$-structure in this construction:

\begin{coro}\label{fusion-decom}
Let $\lambda_1, \lambda_2 \in P^+$, $c_1 \neq c_2 \in \bc$, then for all $\tau \in P^+$
\[
\dim \Hom_{\lie{sl}_n} (V(\lambda_1)_{c_1} \ast V(\lambda_2)_{c_2}, V(\tau)) = c_{\lambda_1, \lambda_2}^{\tau}.
\]
\end{coro}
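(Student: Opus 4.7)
The plan is to observe that the associated graded construction in question does not alter the underlying $\lie{sl}_n$-module structure, so the corollary follows immediately from the classical Littlewood--Richardson rule applied to $V(\lambda_1) \otimes V(\lambda_2)$.

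First I would unpack the filtration. By definition of the grading on $U(\lie{sl}_n \otimes \bc[t])$, we have $U(\lie{sl}_n \otimes 1) \subset U(\lie{sl}_n \otimes \bc[t])^{\leq 0}$. Consequently, each filtration piece
\[
\mathcal{F}^{\leq r} := U(\lie{sl}_n \otimes \bc[t])^{\leq r}.(v_{\lambda_1} \otimes v_{\lambda_2})
\]
of the tensor product $V(\lambda_1)_{c_1} \otimes V(\lambda_2)_{c_2}$ is stable under $U(\lie{sl}_n)$, and hence is an $\lie{sl}_n$-submodule. Moreover, the filtration is exhaustive since $V(\lambda_1)_{c_1} \otimes V(\lambda_2)_{c_2}$ is cyclically generated by $v_{\lambda_1} \otimes v_{\lambda_2}$.

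Next, I would invoke the elementary fact that for an exhaustive filtration of a finite-dimensional vector space by subspaces, the associated graded space is isomorphic to the original space as a vector space; and if the filtration is by submodules for some algebra action that preserves the filtration degree (here, $U(\lie{sl}_n)$ sits in degree $0$), then this isomorphism is an isomorphism of modules. Applying this to the $\lie{sl}_n$-action, we obtain
\[
V(\lambda_1)_{c_1} \ast V(\lambda_2)_{c_2} \;\cong_{\lie{sl}_n}\; V(\lambda_1)_{c_1} \otimes V(\lambda_2)_{c_2} \;\cong_{\lie{sl}_n}\; V(\lambda_1) \otimes V(\lambda_2).
\]

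Finally, the Littlewood--Richardson decomposition of the tensor product recalled in Section~\ref{one} gives
\[
\dim \Hom_{\lie{sl}_n}\bigl(V(\lambda_1) \otimes V(\lambda_2), V(\tau)\bigr) = c_{\lambda_1, \lambda_2}^{\tau},
\]
which yields the claim. There is no real obstacle here; the only subtle point worth verifying carefully is that the $\lie{sl}_n$-action genuinely has degree $0$ for the grading defining the fusion product, so that passing to the associated graded preserves $\lie{sl}_n$-isotypic components (the shift in $t$ inherent in evaluation at $c_i \neq 0$ does not affect this, since the evaluation action of $x \otimes 1$ is still just the original $\lie{sl}_n$-action on each factor).
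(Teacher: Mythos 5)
Your proof is correct and is essentially the same reasoning the paper uses: it observes that since $U(\lie{sl}_n \otimes 1)$ is the degree-zero part of the graded algebra $U(\lie{sl}_n\otimes\bc[t])$, the fusion filtration is by $\lie{sl}_n$-submodules and the $\lie{sl}_n$-structure is unchanged in the associated graded.

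One small imprecision worth flagging: the general principle you invoke, that a filtration by submodules under a degree-preserving action automatically makes the associated graded isomorphic to the original module, is false for an arbitrary algebra --- a nonsplit extension can become split upon passing to the associated graded. What saves the argument here is semisimplicity of the category of finite-dimensional $\lie{sl}_n$-modules (Section~\ref{one}), which guarantees that each inclusion $\mathcal{F}^{\leq r-1}\hookrightarrow\mathcal{F}^{\leq r}$ splits as $\lie{sl}_n$-modules; equivalently, one can observe that since $\lie{h}$ acts in degree zero the $\lie{sl}_n$-character is preserved, and in the semisimple setting the character determines the module. The paper leaves this implicit too, but since you explicitly formulate a general principle, it should either be restricted to the semisimple case or replaced by the character argument.
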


\subsection{}

\begin{lem}\label{whatever}
For $\lambda_1, \lambda_2 \in P^+$, $c_1 \neq c_2 \in \bc$ we have a surjective map of $\lie{sl}_{n} \otimes \bc[t]$-modules:
\[ F_{\lambda_1, \lambda_2} \twoheadrightarrow V(\lambda_1)_{c_1} \ast V(\lambda_2)_{c_2},\]
moreover $a_{\lambda_1, \lambda_2}^{\tau} \geq c_{\lambda_1, \lambda_2}^{\tau}$, $ \forall \, \tau \in P^+$.
\end{lem}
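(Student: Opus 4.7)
The plan is to build the surjection by showing that the image $\bar w$ of the cyclic vector $w := v_{\lambda_1} \otimes v_{\lambda_2}$ in the associated graded module $V(\lambda_1)_{c_1} \ast V(\lambda_2)_{c_2}$ satisfies all the defining relations for the generator of $F_{\lambda_1,\lambda_2}$ listed in Proposition~\ref{relation}. Once this is done, the universal property of $F_{\lambda_1,\lambda_2}$ gives the surjective map, and the claimed inequality on multiplicities follows by combining it with Corollary~\ref{fusion-decom}, which identifies the $\lie{sl}_n$-decomposition of the fusion product with Littlewood--Richardson coefficients.

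Most of the relations are immediate. Using the coproduct, $(x \otimes p(t)).w = p(c_1)xv_{\lambda_1} \otimes v_{\lambda_2} + p(c_2) v_{\lambda_1}\otimes xv_{\lambda_2}$, so $\lie{n}^+ \otimes \bc[t]$ annihilates $w$ and the weight of $w$ is $\lambda_1+\lambda_2$; this handles the relations $\lie{n}^+ \otimes \bc[t].\bar w = 0$ and $(h\otimes 1 - \lambda(h)).\bar w = 0$. For $h \otimes t^k$ ($k\ge 1$), the computation gives $(c_1^k\lambda_1(h) + c_2^k\lambda_2(h)) w$, an element of $\mathcal F^{0}$, so its image in $\mathcal F^{k}/\mathcal F^{k-1}$ vanishes. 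The relation $(f_\alpha \otimes 1)^{\lambda(h_\alpha)+1}.\bar w=0$ follows because $w$ generates the simple submodule $V(\lambda_1+\lambda_2) \subset V(\lambda_1)\otimes V(\lambda_2)$ under $\lie{sl}_n \otimes 1$. Finally, for $(f_\alpha \otimes t^k).w$ with $k\ge 2$, one verifies it is a $\bc$-linear combination of $(f_\alpha\otimes 1).w$ and $(f_\alpha\otimes t).w$ (by solving a $2\times 2$ Vandermonde system in $c_1,c_2$), hence lies in $\mathcal F^{1} \subset \mathcal F^{k-1}$, so its image in $\mathcal F^{k}/\mathcal F^{k-1}$ is zero.

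The main obstacle is the key relation $(f_\alpha \otimes t)^{m+1}.\bar w = 0$, where $m = \min\{\lambda_1(h_\alpha), \lambda_2(h_\alpha)\}$. To establish it, I would work inside the commutative polynomial subalgebra $\bc[f_\alpha \otimes 1,\,f_\alpha \otimes t] \subset U(\lie{sl}_n \otimes \bc[t])$ and use the change of variables $A := f_\alpha\otimes 1 \leftrightarrow f_\alpha^{(1)} + f_\alpha^{(2)}$, $B := f_\alpha \otimes t \leftrightarrow c_1 f_\alpha^{(1)} + c_2 f_\alpha^{(2)}$ acting on $w$, where $f_\alpha^{(i)}$ denotes the action on the $i$-th tensor factor. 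The assumption $c_1 \neq c_2$ makes this linear substitution invertible, with inverse $f_\alpha^{(1)} = (B - c_2 A)/(c_1-c_2)$. Assuming without loss of generality that $m = \lambda_1(h_\alpha)$, the relation $(f_\alpha^{(1)})^{m+1} v_{\lambda_1} = 0$ translates into
\[
(B - c_2 A)^{m+1}.w = 0, \quad \text{i.e.,} \quad B^{m+1}.w = -\sum_{k=0}^{m}\binom{m+1}{k}(-c_2)^{m+1-k}A^{m+1-k}B^{k}.w.
\]
The right-hand side lies in $\mathcal F^{m}$ since every monomial has $t$-degree at most $m$; hence the image of $B^{m+1}.w = (f_\alpha \otimes t)^{m+1}.w$ in $\mathcal F^{m+1}/\mathcal F^{m}$ is zero, which is exactly the required relation on $\bar w$.

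With all defining relations of Proposition~\ref{relation} verified on $\bar w$, Proposition~\ref{relation} yields a surjective $\lie{sl}_n\otimes\bc[t]$-homomorphism $F_{\lambda_1,\lambda_2} \twoheadrightarrow V(\lambda_1)_{c_1}\ast V(\lambda_2)_{c_2}$. Taking $\Hom_{\lie{sl}_n}(-,V(\tau))$ of this surjection and applying Corollary~\ref{fusion-decom} gives $a_{\lambda_1,\lambda_2}^\tau \geq c_{\lambda_1,\lambda_2}^\tau$ for all $\tau \in P^+$.
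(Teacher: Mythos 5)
Your proof is correct and follows essentially the same route as the paper: check the relations of Proposition~\ref{relation} on $\bar w$, the only nontrivial one being $(f_\alpha\otimes t)^{m+1}.\bar w = 0$, and for that one use that $c_1\neq c_2$ makes the $2\times 2$ system relating $f_\alpha\otimes 1,\ f_\alpha\otimes t$ to $f_\alpha^{(1)},\ f_\alpha^{(2)}$ invertible. The paper packages this via a weight-space dimension count in the embedded $\lie{sl}_2$-case together with a Vandermonde-determinant remark, whereas you simply write down the change of variables and expand $(B-c_2A)^{m+1}.w=0$; your presentation of the key step is a bit more direct but not a different idea.
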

\begin{proof}
We prove the $\lie{sl}_2$-case first. Here dominant integral weights are parameterized by $\bz_{\geq 0}$, and for $k \geq 0$ let $V(k) =Sym^k \bc^2$. Fix $k \geq m \geq 0$. Then 
\[ 
\dim (V(k)_{c_1} \otimes V(m)_{c_2})_{k + m - 2 \ell} = \begin{cases} \ell + 1 \text{ for } 0 \leq \ell \leq m \\ m + 1 \text{ for } m \leq \ell \leq  k \\  k + m + 1- \ell \text{ for } k \leq \ell \leq k + m \end{cases}
\]
Since $c_1 \neq c_2$, we se, using the Vandermonde determinant, that 
\[ (f_\alpha \otimes t)^{m+1} v_k \otimes v_m \in \langle (f_\alpha \otimes 1)^{m+1} v_k \otimes v_m, \ldots ,(f_\alpha \otimes 1) (f_\alpha \otimes t)^m v_k \otimes v_m \rangle_{\bc}\]
since the $k-2$-weight space is at most $m+1$-dimensional. This implies that $(f_\alpha \otimes t)^{m+1} v_k \otimes v_m$ is  $0$ in the associated graded module.\\ 
We see further, that the weight space of weight $k+m -2$ is two dimensional and spanned by the vectors $ (f_\alpha \otimes 1) v_k \otimes v_m, (f_\alpha \otimes t) v_k \otimes v_m$. This implies that for $\ell \geq 2$, $(f_\alpha \otimes t^\ell) v_k \otimes v_m = 0$ in the fusion product, similarly we see that for all $\ell \geq 1$, $h \otimes t^\ell v_k \otimes v_m = 0$ in the fusion product.\\
This implies that there is a surjective map of $\lie{sl}_2$-modules
\[ F_{k\omega,m\omega} \twoheadrightarrow V(k)_{c_1} \ast V(m)_{c_2}. \]
Let us turn to the general case. Let $\lambda_1, \lambda_2 \in P^+$, $c_1 \neq c_2 \in \bc$, $\alpha \in R^+$, and let $m = \min \{ \lambda_1(h_\alpha), \lambda_2(h_\alpha) \}$.  By considering the $\lie{sl}_2$-triple $\{ e_\alpha, h_\alpha, f_\alpha\}$ we see with the same argument as  above that
\[
 (f_\alpha \otimes t)^{m+1} v_{\lambda_1} \otimes v_{\lambda_2} \in \operatorname{ span } \{  (f_\alpha \otimes 1)^{m+1} v_{\lambda_1} \otimes v_{\lambda_2}, \ldots , (f_\alpha \otimes 1)(f_\alpha \otimes t)^m v_{\lambda_1} \otimes v_{\lambda_2} \} 
\]
This implies that $ (f_\alpha \otimes t)^{m+1} v_{\lambda_1} \otimes v_{\lambda_2} = 0$ in the associated graded. The remaining defining relations for $F_{\lambda_1, \lambda_2}$ are easily verified.
\end{proof}

Using this lemma we have the following very interesting consequence:
\begin{coro}
If  $\forall \, \tau \in P^+$:  $a^{\tau}_{\lambda_1, \lambda_2} = c^{\tau}_{\lambda_1, \lambda_2}$, then for all $c_1 \neq c_2 \in \bc:$
\[  V(\lambda_1)_{c_1} \ast V(\lambda_2)_{c_2} \cong_{\lie{sl}_{n} \otimes \bc[t] } F_{\lambda_1, \lambda_2}.\]
Moreover, the fusion product in this case is independent of the parameter $c_1, c_2$, providing another proof of a conjecture by B.Feigin and S.Loktev (\cite{FL99}).
\end{coro}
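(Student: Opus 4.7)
The plan is to combine Lemma~\ref{whatever} with a dimension count forced by the hypothesis, and then read off independence as a corollary of the fact that the intermediate module $F_{\lambda_1,\lambda_2}$ does not know about $c_1,c_2$.

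First I would record that Lemma~\ref{whatever} already provides a surjection
\[
\pi_{c_1,c_2}\colon F_{\lambda_1,\lambda_2} \twoheadrightarrow V(\lambda_1)_{c_1} \ast V(\lambda_2)_{c_2}
\]
of $\lie{sl}_n \otimes \bc[t]$-modules for any pair $c_1 \neq c_2$. Since both modules are integrable and finite-dimensional as $\lie{sl}_n$-modules, to upgrade $\pi_{c_1,c_2}$ to an isomorphism it suffices to compare their $\lie{sl}_n$-decompositions, i.e.\ to check equality of multiplicities $\dim \Hom_{\lie{sl}_n}(-,V(\tau))$ for every $\tau \in P^+$.

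Next I would invoke the two multiplicity formulas already on the table. By definition,
\[
\dim \Hom_{\lie{sl}_n}(F_{\lambda_1,\lambda_2},V(\tau)) = a_{\lambda_1,\lambda_2}^{\tau},
\]
while Corollary~\ref{fusion-decom} gives
\[
\dim \Hom_{\lie{sl}_n}(V(\lambda_1)_{c_1} \ast V(\lambda_2)_{c_2},V(\tau)) = c_{\lambda_1,\lambda_2}^{\tau}.
\]
The standing hypothesis $a_{\lambda_1,\lambda_2}^{\tau} = c_{\lambda_1,\lambda_2}^{\tau}$ for every $\tau \in P^+$ therefore forces $\dim F_{\lambda_1,\lambda_2} = \dim V(\lambda_1)_{c_1}\ast V(\lambda_2)_{c_2}$, and the surjection $\pi_{c_1,c_2}$ becomes an isomorphism of $\lie{sl}_n \otimes \bc[t]$-modules.

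Finally, for the independence statement, the key observation is that $F_{\lambda_1,\lambda_2}$ is defined intrinsically with no reference to any evaluation parameters. Hence for any two pairs $(c_1,c_2)$ and $(c_1',c_2')$ of distinct complex numbers, the composition
\[
V(\lambda_1)_{c_1}\ast V(\lambda_2)_{c_2} \xleftarrow{\ \pi_{c_1,c_2}\ } F_{\lambda_1,\lambda_2} \xrightarrow{\ \pi_{c_1',c_2'}\ } V(\lambda_1)_{c_1'}\ast V(\lambda_2)_{c_2'}
\]
of the two isomorphisms just produced is an isomorphism of graded $\lie{sl}_n \otimes \bc[t]$-modules, recovering the two-factor case of the Feigin--Loktev conjecture. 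The whole argument is short: there is no genuine obstacle once Lemma~\ref{whatever} and Corollary~\ref{fusion-decom} are in hand, and the only thing to be careful about is that both multiplicity statements really are formulated on the level of $\lie{sl}_n$-module structure, so that counting dimensions is legitimate for the comparison.
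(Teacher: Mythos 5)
Your proposal is correct and follows essentially the same route as the paper: surjection from Lemma~\ref{whatever}, multiplicity comparison via Corollary~\ref{fusion-decom} and the hypothesis, and a dimension count to upgrade the surjection to an isomorphism, with independence read off from the intrinsic definition of $F_{\lambda_1,\lambda_2}$. No gaps.
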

\begin{proof}
By Lemma~\ref{whatever} we have for all $\lambda_1, \lambda_2 \in P^+$ and $c_1 \neq c_2 \in \bc$ a surjective map of $\lie{sl}_n \otimes \bc[t]$-modules
\[
F_{\lambda_1, \lambda_2} \twoheadrightarrow V(\lambda_1)_{c_1} \ast V(\lambda_2)_{c_2}.
\]
With Corollary~\ref{fusion-decom} we know that the multiplicity of $V(\tau)$ in the fusion product is $c_{\lambda_1, \lambda_2}^\tau$. By assumption, this is equal to $a_{\lambda_1, \lambda_2}^\tau$, which is the multiplicity of $V(\tau)$ in $F_{\lambda_1, \lambda_2}$. So the modules are isomorphic as $\lie{sl}_n$-modules and hence by a dimension argument also as $\lie{sl}_n \otimes \bc[t]$-modules.\\
Since $F_{\lambda_1, \lambda_2}$ is a graded module and independent of any evaluation parameter, the same is true for the fusion product $V(\lambda_1)_{c_1} \ast V(\lambda_2)_{c_2}$.
\end{proof}

\section{First proofs for parts of the main theorem}\label{firstproof}
We prove here the $\lie{sl}_2$-case, namely $a_{m \omega_1, k \omega_1}^{\tau} = c_{m \omega_1, k \omega_1}^{\tau}$ for all $m, k \geq 0$ and $\tau \in P^+$. In the following section we prove the $\lambda_1 \gg \lambda_2$-case.

\subsection{}
In this section we consider the $\lie{sl}_2$-case. In this case, dominant integral weights are parametrized by non-negative integers.
\begin{lem}\label{fusion-sl2} Let $m_1, m_2 \geq 0$, then for all $c_1 \neq c_2 \in \bc$
\[ 
F_{m_1 \omega_1, m_2 \omega_1} \cong_{\lie{sl}_2 \otimes \bc[t]} V(m_1 \omega_1)_{c_1} \ast V(m_2 \omega_1)_{c_2}.
\]
Moreover, $a_{m_1 \omega_1, m_2 \omega_1}^{k \omega_1} = c_{m_1 \omega_1, m_2 \omega_1}^{k \omega_1}$.
\end{lem}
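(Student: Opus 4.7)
The strategy is to combine the lower bound $a^\tau_{m_1\omega_1, m_2\omega_1} \geq c^\tau_{m_1\omega_1, m_2\omega_1}$ coming from Lemma~\ref{whatever} with an upper bound extracted from the polytope description in Corollary~\ref{span-m} together with Corollary~\ref{cor-upper}. Once the equality of multiplicities is in hand, the asserted isomorphism with the fusion product follows from the corollary after Lemma~\ref{whatever}.

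First I would specialize the polytope $\mathcal{P}(\mathbf{a})$ to $\lie{sl}_2$. There is a unique positive root $\alpha$, so the only Dyck path is the singleton $(\alpha)$ and $\mathbf{a}$ collapses to the single number $m := \min\{m_1, m_2\}$. Hence
\[
S(m_1\omega_1, m_2\omega_1) = \{\,\mathbf{s} = (s_\alpha) \in \mathbb{Z}_{\geq 0} : 0 \leq s_\alpha \leq m\,\}.
\]
An integer point $\mathbf{s}$ in this set has weight $s_\alpha \alpha = 2 s_\alpha \omega_1$, so the fiber of the weight map over $(m_1+m_2-k)\omega_1$ contains exactly one element when $m_1+m_2-k$ is a nonnegative even integer bounded by $2m$, and is empty otherwise. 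Equivalently, it is a singleton precisely when $k \in \{|m_1-m_2|,\, |m_1-m_2|+2,\, \ldots,\, m_1+m_2\}$.

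Next I would invoke Corollary~\ref{cor-upper}, which gives
\[
a^{k\omega_1}_{m_1\omega_1, m_2\omega_1} \;=\; \sharp\{\mathbf{s} \in S_{hw}(m_1\omega_1, m_2\omega_1) : \wt(\mathbf{s}) = (m_1+m_2-k)\omega_1\}.
\]
Since $S_{hw} \subseteq S$, the computation of the previous paragraph shows $a^{k\omega_1}_{m_1\omega_1, m_2\omega_1} \leq 1$ for $k$ in the admissible set above, and $a^{k\omega_1}_{m_1\omega_1, m_2\omega_1} = 0$ otherwise. On the other hand, the classical Clebsch--Gordan rule for $\lie{sl}_2$ gives $c^{k\omega_1}_{m_1\omega_1, m_2\omega_1} = 1$ for precisely those admissible $k$, and $0$ otherwise. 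Combined with the reverse inequality $a^{k\omega_1}_{m_1\omega_1, m_2\omega_1} \geq c^{k\omega_1}_{m_1\omega_1, m_2\omega_1}$ from Lemma~\ref{whatever}, equality follows for every $k \geq 0$.

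Finally, with $a^\tau = c^\tau$ established for all $\tau \in P^+$, the corollary immediately after Lemma~\ref{whatever} upgrades the surjection $F_{m_1\omega_1, m_2\omega_1} \twoheadrightarrow V(m_1\omega_1)_{c_1} \ast V(m_2\omega_1)_{c_2}$ to an isomorphism of $\lie{sl}_2 \otimes \bc[t]$-modules. There is no serious obstacle in rank one: the polytope is one-dimensional, so it already selects at most a single highest weight vector in each weight space, and the numerics of Clebsch--Gordan match perfectly with the sandwich inequality.
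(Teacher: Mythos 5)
Your proof is correct, and the overall sandwich strategy (lower bound from Lemma~\ref{whatever}, upper bound on multiplicities, Clebsch--Gordan to close the gap, then the corollary after Lemma~\ref{whatever} to get the isomorphism) is the same as the paper's. The difference lies in how the upper bound is obtained. The paper's proof of Lemma~\ref{fusion-sl2} is deliberately bare-handed: assuming $m_1 \geq m_2$, it rewrites the defining relations and observes directly that, as an $\lie{sl}_2$-module, $F_{m_1\omega_1, m_2\omega_1}$ is generated by the finite set $\{\mathbbm{1}, (f\otimes t).\mathbbm{1}, \ldots, (f\otimes t)^{m_2}.\mathbbm{1}\}$, from which it concludes multiplicity-freeness and that the weights occurring are exactly $m_1 + m_2 - 2\ell$ with $\ell \in \{0,\ldots,m_2\}$. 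You instead route the upper bound through the general PBW-filtration framework of Section~\ref{PBW}: Corollary~\ref{cor-upper} plus the containment $S_{hw}(\lambda_1,\lambda_2) \subseteq S(\lambda_1,\lambda_2)$, with the polytope collapsing to the interval $[0, \min\{m_1,m_2\}]$ because in rank one the only Dyck path is the single root. The two upper-bound arguments are ultimately the same observation (the monomials $(f\otimes t)^s$, $0 \le s \le \min\{m_1,m_2\}$, span the $U(\lie{n}^-\otimes t)$-part), but your version invokes the general machinery while the paper's is self-contained and doesn't rely on Corollaries~\ref{span-m} or \ref{cor-upper}. Both are valid; the paper's is more elementary, yours shows how the rank-one case falls out of the uniform framework.
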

This proves Theorem~\ref{main-thm}(1) for $A_1$.
\begin{proof}
Let $m_1, m_2 \in \mathbb{Z}_{\geq 0}$, then by Lemma~\ref{whatever} it suffices to prove that $ \forall \; k \geq 0$
\[
a_{m_1 \omega_1, m_2 \omega_1}^{k \omega_1} \leq  c_{m_1 \omega_1, m_2 \omega_1}^{k \omega_1} \;
\]
Suppose $m_1 \geq m_2$. Then the relations of $F_{m_1 \omega_1, m_2 \omega_2}$ can be rewritten as
\[ (h \otimes 1).\mathbbm{1} = (m_1 + m_2 + 1). \mathbbm{1}; \; ; (f \otimes 1)^{m_1 + m_2 + 1}. \mathbbm{1} = 0 \; ; \; (f \otimes t)^{m_2 + 1}. \mathbbm{1}= 0,\]
while $(\lie {n}^- \otimes t^2 \bc[t] \oplus \lie b \otimes t\bc[t] \oplus \lie {n}^+ \otimes 1).  \mathbbm{1} = 0$. By considering $F_{m_1 \omega_1, m_2 \omega_1}$ as an $\lie{sl}_2$-module we see from the relations, that it is generated by 
\[ \{ \mathbbm{1}, (f \otimes t).\mathbbm{1}, (f \otimes t)^{2}.\mathbbm{1}, \ldots, , (f \otimes t)^{m_2}.\mathbbm{1} \} .\]
This implies that $F_{m_1 \omega_1, m_2 \omega_1}$ is multiplicity free and moreover we see that 
\[a_{m_1 \omega_1, m_2 \omega_1}^{k \omega_1} = 1 \Rightarrow k = m_1 + m_2 - 2 \ell \text{ for some } \ell \in \{0, \ldots, m_2 \}.\]
The famous Clebsch-Gordan formula gives for $k = m_1 + m_2 - 2 \ell \text{ for some } \ell \in \{0, \ldots, m_2 \}$
\[c_{m_1 \omega_1, m_2 \omega_1}^{k \omega_1} =1  \text{ and } c_{m_1 \omega_1, m_2 \omega_1}^{k \omega_1} = 0 \text{ else  }.\]
This implies (with Lemma~\ref{whatever})
\[
a_{m_1 \omega_1, m_2 \omega_1}^{k \omega_1} \leq  c_{m_1 \omega_1, m_2 \omega_1}^{k \omega_1} \leq a_{m_1 \omega_1, m_2 \omega_1}^{k \omega_1}.
\]
\end{proof}
Note here, that this elementary result follows also from \cite{FF02} and \cite{CV13}.

\subsection{}
Let $\lambda_1, \lambda_2 \in P^+$. We say
\[
\lambda_1 \gg \lambda_2 \Leftrightarrow \lambda_1 + w(\lambda_2) \in P^+ \Leftrightarrow c_{\lambda_1, \lambda_2}^{\tau} = \dim V(\lambda_2)_{ \tau - \lambda_1},
\, \forall \tau \in P^+.
\] 
This is certainly satisfied if  $\lambda_1(h_\alpha) \gg \lambda(h_\alpha)$ for all $\alpha \in R^+$. \\

Suppose now $\lambda_1 \gg \lambda_2$, then $\min\{ \lambda_1(h_\alpha), \lambda_2(h_\alpha) \} = \lambda_2(h_\alpha)$. Which implies that if we define
\[
\mathbf{a} \in \bz^{n(n-1)/2} \text{ via } a_\alpha = \min \{ \lambda_1(h_\alpha), \lambda_2(h_\alpha) \}
\]
then $a_\alpha = \lambda_2(h_\alpha)$. Let us denote $V(\lambda_2)^{a}$ the associated graded module obtained through the PBW filtration $U(\lie n^-)$ on the highest weight vector $v_{\lambda_2} \in V(\lambda_2)$ (see \cite{FFoL11a} for more details). This is a module for $S(\lie n^-)$, the associated graded algebra of $U(\lie n^-)$. \begin{prop}\label{classicalpbw}
If $\lambda \gg \lambda_2$, then
\[
S(\lie n^-)/\mathcal{I}(\mathbf{a}) \cong V(\lambda_2)^{a}.
\]
\end{prop}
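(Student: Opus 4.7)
The plan is to reduce this statement to the main theorem of \cite{FFoL11a} by showing that, under the hypothesis $\lambda_1\gg\lambda_2$, the tuple $\mathbf{a}$ degenerates to the tuple associated to the single weight $\lambda_2$.

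First I would unpack the hypothesis. By definition $\lambda_1 \gg \lambda_2$ means $\lambda_1 + w(\lambda_2) \in P^+$ for every $w \in W$. Applying this to $w = s_\alpha$, the simple or non-simple reflection attached to a positive root $\alpha$, gives $(\lambda_1 + s_\alpha\lambda_2)(h_\alpha) \geq 0$, i.e.\ $\lambda_1(h_\alpha) \geq \lambda_2(h_\alpha)$. Running through all $\alpha \in R^+$ shows
\[
a_\alpha \;=\; \min\{\lambda_1(h_\alpha),\lambda_2(h_\alpha)\} \;=\; \lambda_2(h_\alpha) \qquad \forall\ \alpha\in R^+.
\]
Consequently the ideal $\mathcal{I}(\mathbf{a})$ agrees verbatim with the ideal
\[
\mathcal{I}(\lambda_2) \;=\; S(\lie{n}^-)\Bigl\langle U(\lie{n}^+)\circ f_\alpha^{\lambda_2(h_\alpha)+1}\ :\ \alpha\in R^+\Bigr\rangle
\]
studied in \cite{FFoL11a}, where it is the ideal attached to the single dominant weight $\lambda_2$.

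Second, I would invoke the main theorem of \cite{FFoL11a}, which asserts that for every $\mu\in P^+$ there is an $S(\lie{n}^-)$-module isomorphism $S(\lie{n}^-)/\mathcal{I}(\mu) \cong V(\mu)^a$, the PBW-associated graded of $V(\mu)$ on its highest weight vector. Specializing $\mu = \lambda_2$ and combining with the identification $\mathcal{I}(\mathbf{a}) = \mathcal{I}(\lambda_2)$ from the previous paragraph yields
\[
S(\lie{n}^-)/\mathcal{I}(\mathbf{a}) \;\cong\; V(\lambda_2)^a,
\]
which is the desired isomorphism.

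The proof is therefore essentially a bookkeeping argument once one verifies the coroot inequality. The only real step is the reduction $\lambda_1\gg\lambda_2 \Rightarrow \lambda_1(h_\alpha)\geq\lambda_2(h_\alpha)$ for all positive roots, which the reflection trick above handles cleanly; after that the result is immediate from \cite{FFoL11a}. So there is no serious obstacle, and nothing new needs to be proved beyond quoting Theorems A/B of \cite{FFoL11a}.
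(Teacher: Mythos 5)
Your proof is correct and takes essentially the same route as the paper: the paper also observes (just before the proposition) that $\lambda_1 \gg \lambda_2$ forces $a_\alpha = \lambda_2(h_\alpha)$ and then cites Theorem A of \cite{FFoL11a}. You merely spell out the reflection argument $(\lambda_1+s_\alpha\lambda_2)(h_\alpha)=\lambda_1(h_\alpha)-\lambda_2(h_\alpha)\geq 0$ which the paper leaves implicit.
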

\begin{proof} This is nothing but \cite[Theorem A]{FFoL11a}.
\end{proof}
We are ready to prove:
\begin{thm}
If $\lambda_1 \gg \lambda_2$, then
\[
a_{\lambda_1, \lambda_2}^{\tau} = c_{\lambda_1, \lambda_2}^{\tau}
\]
and 
\[
F_{\lambda_1, \lambda_2} \cong_{\lie{sl}_n \otimes \bc[t]} V(\lambda_1)_{c_1} \ast V(\lambda_2)_{c_2}
\]
for all $c_1 \neq c_2 \in \bc$.
\end{thm}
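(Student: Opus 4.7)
The plan is to establish the reverse of the inequality already furnished by \lemref{whatever}, namely $a_{\lambda_1,\lambda_2}^{\tau}\leq c_{\lambda_1,\lambda_2}^{\tau}$ for all $\tau \in P^+$. Once that is achieved, the surjection of $\lie{sl}_n \otimes \bc[t]$-modules
\[
F_{\lambda_1, \lambda_2} \twoheadrightarrow V(\lambda_1)_{c_1} \ast V(\lambda_2)_{c_2}
\]
from \lemref{whatever} matches $\lie{sl}_n$-multiplicities on both sides by \corref{fusion-decom}, so the two modules have the same $\lie{sl}_n$-character and, a fortiori, the same dimension; the surjection is then forced to be an isomorphism of $\lie{sl}_n \otimes \bc[t]$-modules.

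To upper-bound $a_{\lambda_1,\lambda_2}^{\tau}$ I would proceed in two steps. First, combine \corref{span-m} with \corref{cor-upper}: since $S_{hw}(\lambda_1,\lambda_2)\subseteq S(\lambda_1,\lambda_2)$, we get
\[
a_{\lambda_1,\lambda_2}^{\tau} \;\leq\; \sharp\bigl\{\mathbf{s}\in S(\lambda_1,\lambda_2)\,\bigl|\,\wt(\mathbf{s}) = \lambda_1+\lambda_2 - \tau\bigr\}.
\]
Second, identify this polytope count with a weight multiplicity in $V(\lambda_2)$. The hypothesis $\lambda_1\gg\lambda_2$ forces $\min\{\lambda_1(h_\alpha),\lambda_2(h_\alpha)\} = \lambda_2(h_\alpha)$ for every $\alpha \in R^+$, so the tuple $\mathbf{a}$ appearing in \propref{prop-maps} equals $(\lambda_2(h_\alpha))_{\alpha\in R^+}$. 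Under this specialisation \propref{classicalpbw} gives $S(\lie n^-)/\mathcal{I}(\mathbf{a}) \cong V(\lambda_2)^{a}$, and by the basis statement of \cite{FFoL11a} recalled after \lemref{span-set} the integer points $S(\mathbf{a})$ even parametrise a weight-graded basis. Matching weight spaces yields
\[
\sharp\bigl\{\mathbf{s}\in S(\lambda_1,\lambda_2)\,\bigl|\,\wt(\mathbf{s}) = \mu\bigr\} \;=\; \dim V(\lambda_2)_{\lambda_2-\mu}
\]
for all relevant $\mu$. Specialising $\mu = \lambda_1+\lambda_2-\tau$ and invoking \remref{first-rem} gives $\dim V(\lambda_2)_{\tau-\lambda_1} = c_{\lambda_1,\lambda_2}^{\tau}$, completing the desired upper bound.

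The main obstacle is the second step: the argument crucially needs an honest basis (not just a spanning set) indexed by $S(\mathbf{a})$. This is available to us precisely because the condition $\lambda_1\gg\lambda_2$ collapses the tuple $\mathbf{a}$ to the tuple associated with the \emph{single} dominant weight $\lambda_2$, placing us in the setting of \cite{FFoL11a}. For general pairs $(\lambda_1,\lambda_2)$ the polytope $\mathcal{P}(\mathbf{a})$ is only conjecturally basis-indexing, as noted after \lemref{span-set}; this is exactly why the clean polytope-counting approach does not by itself extend beyond the $\lambda_1\gg\lambda_2$ case and the other special configurations covered by the main theorem, which require independent combinatorial input.
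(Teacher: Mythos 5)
Your proposal is correct and follows essentially the same route as the paper: bound $a_{\lambda_1,\lambda_2}^\tau$ above by the polytope count $\sharp\{\mathbf{s}\in S(\lambda_1,\lambda_2):\wt(\mathbf{s})=\lambda_1+\lambda_2-\tau\}$ via \corref{cor-upper}, observe that $\lambda_1\gg\lambda_2$ reduces $\mathbf{a}$ to $(\lambda_2(h_\alpha))_\alpha$ so that the basis theorem of \cite{FFoL11a} and \remref{first-rem} identify this count with $c_{\lambda_1,\lambda_2}^\tau$, and combine with the lower bound from \lemref{whatever} to conclude. The closing observation about the isomorphism with the fusion product matches the paper's corollary to \lemref{whatever}.
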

\begin{proof}
With Corollary~\ref{cor-upper} we see that 
\[
a_{\lambda_1, \lambda_2}^{\tau} \leq  \sharp \{  \mathbf{s} \in S(\lambda_1, \lambda_2) \, | \, \wt(\mathbf{s}) = \lambda_1 + \lambda_2 - \tau \}.
\]
On the other hand, by Lemma~\ref{whatever}, we have
\[
a_{\lambda_1, \lambda_2}^{\tau} \geq c_{\lambda_1, \lambda_2}^{\tau}.
\]
By assumption $\lambda_1 \gg \lambda_2$, which implies (Remark~\ref{first-rem})
\[
 c_{\lambda_1, \lambda_2}^{\tau} = \dim V(\lambda_2)_{\tau - \lambda_1}.
\]
Now \cite[Theorem B]{FFoL11a} gives in this case a parametrization of a basis of $V(\lambda_2)$ in terms of (in our notation) $S(\lambda_1, \lambda_2)$, namely
\[
\dim V(\lambda_2)_{\tau - \lambda_1} =  \sharp \{  \mathbf{s} \in S(\lambda_1, \lambda_2) \, | \, \wt(\mathbf{s}) = \lambda_2 - (\tau -\lambda_1)\}.
\]
Which implies also
\[
a_{\lambda_1, \lambda_2}^{\tau} \leq c_{\lambda_1, \lambda_2}^{\tau},
\]
hence the equality follows.
\end{proof}


\section{Rectangular weights}\label{KR}
In this section we prove generators and relations for the fusion product of two arbitrary Kirillov-Reshetikhin modules. These modules are defined in the context of simple, finite-dimensional modules for the quantum affine algebra. They are indexed by a node $i \in I$, a level $m$ and an evaluation parameter $a \in \bc(q)^*$ and denoted $KR( m \omega_i, a)$. For more on their importance we refer here to the survey \cite{CH10}.\\
 In this paper we consider the non-quantum analog (obtained through the $q\mapsto 1$ limit). In the $\lie{sl}_n$-case, they are isomorphic to evaluation modules $V(m \omega_i)_{c}$ for some $c \in \bc$.\\
We have seen in Lemma~\ref{whatever} that 
\[
F_{m_i \omega_i, m_j \omega_j} \twoheadrightarrow V(m_i \omega_i)_{c_1} \ast V(m_j \omega_j)_{c_2}
\]
for all $c_1 \neq  c_2$. We want to prove that this map is in fact an isomorphism, so we have to show that for all $\tau \in P^+$
\[
a_{m_i \omega_i, m_j \omega_j}^{\tau} = c_{m_i \omega_i, m_j \omega_j}^{\tau} .
\]

\subsection{}
First, we will give formulas for the right hand side.  We refer here to \cite{N2002} where the decomposition of a tensor product was computed by using combinatorics of Young tableaux. A formula for the tensor product of $V(\lambda_1)$ with $V(\omega_1)$ is given explicitly and as well as the induction procedure for $V(\lambda_2)$. In the special case of $\lambda_1 = m_i \omega_i$ and $\lambda_2 = m_j \omega_j$ one can deduce straightforward that for all $\tau \in P^+$:
\[ c_{m_i \omega_i, m_j \omega_j}^{\tau} \in \{ 0,1\}. \]
Moreover
\begin{prop} For $i \leq j$, $c_{m_i \omega_i, m_j \omega_j}^{\tau}=1$ if and only if (setting $\omega_n = \omega_0 = 0$.)
\[
\tau = m_i \omega_i + m_j \omega_j + \sum_{q \geq 0}^{ \min\{ i, j+i, n-j\}} b_q(\omega_{i -q} + \omega_{j + q} -  \omega_i - \omega_j) 
\]
with 
\[
  \sum_{q \geq 0}^{ \min\{ i, j+i, n-j\}} b_q \leq \min\{ m_i, m_j\} \; \; , \; b_q \geq 0
\]

\end{prop}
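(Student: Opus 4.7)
The plan is to translate the statement into the language of Young tableaux and compute the Littlewood--Richardson coefficients explicitly using the inductive procedure described in \cite{N2002}. Under the standard identification $m\omega_k \leftrightarrow (m^k,0^{n-k})$ between dominant weights of $\lie{sl}_n$ and partitions with at most $n-1$ parts, the weights $m_i\omega_i$ and $m_j\omega_j$ correspond to rectangular partitions of shape $(m_i^i)$ and $(m_j^j)$ respectively, with $i \leq j \leq n-1$. The LR coefficient $c^{\tau}_{m_i\omega_i,m_j\omega_j}$ then equals the number of LR skew tableaux of shape $\tau/(m_i^i)$ with content $(m_j^j)$.

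My first step is to invoke the formula from \cite{N2002} for tensoring with $V(\omega_1)$: it asserts that $V(\mu) \otimes V(\omega_1)$ decomposes as a direct sum over all partitions $\nu$ obtained from $\mu$ by adding a single box (within the $(n-1)$-row strip). Iterating this, and using the induction procedure for $V(m_j\omega_j)$ also provided in \cite{N2002}, one builds the tensor product $V(m_i\omega_i) \otimes V(m_j\omega_j)$ by successive Pieri-type additions. Because the starting shape $(m_i^i)$ is rectangular with $i\leq j$, at each stage the resulting shape is forced by the overall distribution of added boxes, so that the only freedom is how many rows of the added rectangle are shifted past row $i$. Consequently every nonzero LR coefficient equals $1$, which proves the multiplicity-freeness claim $c^{\tau} \in \{0,1\}$ quoted just before the proposition.

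Next I would parameterize those $\tau$ with $c^{\tau}=1$. Writing weights in the $\epsilon$-basis via $\omega_k = \epsilon_1+\cdots+\epsilon_k$, one computes
\[
\omega_{i-q}-\omega_i \;=\; -(\epsilon_{i-q+1}+\cdots+\epsilon_i),\qquad \omega_{j+q}-\omega_j \;=\; \epsilon_{j+1}+\cdots+\epsilon_{j+q},
\]
so that $b_q(\omega_{i-q}+\omega_{j+q}-\omega_i-\omega_j)$ transfers $b_q$ units from the last $q$ coordinates of the first block to the first $q$ coordinates beyond the $j$-th. Applied to $m_i\omega_i+m_j\omega_j$, which in the $\epsilon$-basis has parts $(m_i+m_j)^{i},\, m_j^{j-i},\, 0^{n-j}$, the parameter $b_q$ is precisely the number of boxes of the second rectangle that are pushed from row $i-q+1$ down to row $j+q$. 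The conditions $b_q\geq 0$ and $\sum_q b_q \leq \min\{m_i,m_j\}$ are then exactly equivalent to $\tau$ being dominant (decreasing parts) and to the LR admissibility of the filling, while the upper index $\min\{i, j+i, n-j\}$ corresponds to keeping $\omega_{i-q}$ and $\omega_{j+q}$ among the fundamental weights of $\lie{sl}_n$, under the convention $\omega_0=\omega_n=0$.

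The main obstacle will be the careful bookkeeping that matches the $(b_q)$-parameterization with the LR tableaux of skew shape $\tau/(m_i^i)$: one must verify that the assignment $(b_q) \mapsto \tau$ is a bijection between tuples satisfying the stated inequalities and partitions $\tau$ with nonzero LR coefficient, simultaneously enforcing the row-bound $n-j$, the width-bound $\min\{m_i,m_j\}$, and the dominance of $\tau$. Once this is established, the multiplicity-freeness from the first step immediately yields $c^{\tau}_{m_i\omega_i,m_j\omega_j}=1$ for each such $\tau$ and $0$ otherwise, completing the proof.
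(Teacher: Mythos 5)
Your route is the same as the paper's: both go through the Young-tableaux machinery of \cite{N2002} for decomposing tensor products, and the paper itself supplies no proof beyond the remark that the claim ``can be deduced straightforward.'' Your $\epsilon$-basis translation is a genuine addition and is essentially correct: writing $m_i\omega_i + m_j\omega_j$ as the partition $\bigl((m_i+m_j)^{\,i},\, m_j^{\,j-i},\, 0^{\,n-j}\bigr)$ and expanding $b_q(\omega_{i-q}+\omega_{j+q}-\omega_i-\omega_j)$ in the $\epsilon$-coordinates shows that passing to the partial sums $a_q = b_q + b_{q+1} + \cdots$ exactly recovers the parameterization $\min\{m_i,m_j\}\geq a_1\geq a_2\geq\cdots\geq 0$ that the paper itself uses a few lines later, and the constraints $b_q\geq 0$, $\sum b_q\leq\min\{m_i,m_j\}$ are what make the resulting $\tau$ dominant. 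So the scaffolding is right.

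Two points need tightening. First, the justification for multiplicity-freeness --- ``at each stage the resulting shape is forced by the overall distribution of added boxes'' --- is not an argument: the Pieri rule for $V(\omega_1)$ alone does not obviously force multiplicity one, since $V(m_i\omega_i)\otimes V(m_j\omega_j)$ is a proper summand of $V(m_i\omega_i)\otimes V(\omega_1)^{\otimes m_j j}$, and the ``induction procedure'' needs to be carried through carefully. That the product of two rectangles is multiplicity-free is a genuine theorem (it is among Stembridge's list of multiplicity-free products, and explicit rectangular LR rules appear in Okada and Carr\'e--Leclerc); a citation is safer than the heuristic. Second, your gloss ``$b_q$ is the number of boxes pushed from row $i-q+1$ down to row $j+q$'' is inaccurate: $b_q(\omega_{i-q}+\omega_{j+q}-\omega_i-\omega_j)$ subtracts $b_q$ from \emph{each} of coordinates $i-q+1,\ldots,i$ and adds $b_q$ to \emph{each} of coordinates $j+1,\ldots,j+q$, so it moves $qb_q$ boxes distributed over a range of rows, not a single row-to-row shift. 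The bijection with LR skew tableaux that you defer is the actual content of the proposition, so it needs to be carried out or cited; but the approach you sketch is the intended one.
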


\subsection{}
Second, we will compute $a_{m_i \omega_i, m_j \omega_j}^{\tau}$. For this we identify again
\[
\mathbf{f}^{\mathbf{s}} \leftrightarrow \prod_{\alpha } (f_{\alpha} \otimes t)^{s_\alpha}.
\]
Recall, from Section~\ref{PBW} (and \cite{FFoL11a}) that $\lie{n}^+$ acts by differential operators on $S(\lie{n}^-)$. Here, we introduce a new class of operators as follows. Let $R^+_{\lambda_1, \lambda_2} = \{ \alpha \in R^+ \, | \, \lambda_1(h_\alpha) = \lambda_2(h_\alpha) = 0$\}. Then $\lie{n}^-_{\lambda_1, \lambda_2} = \langle f_\alpha \, | \, \alpha \in R^+_{\lambda_1, \lambda_2}\rangle $ is a subalgebra. We define for $\alpha \in R^+_{\lambda_1, \lambda_2}, \beta \in R^+$:
\[ f_\alpha \circ f_\beta \otimes t = \begin{cases} f_{\alpha +  \beta} \otimes t  \text{ if } \alpha + \beta \in R^+ \\ 0  \; \; \; \text{ else }  \end{cases}
\]
This is induced by the adjoint action of $\lie{n}^-$ on $\lie{n}^- \otimes t$ (we normalize if necessary here). Moreover
\begin{prop}
This action induces an action of differential operators on $U(\lie{n}^- \otimes t).\mathbbm{1} \subset F_{\lambda_1, \lambda_2}$. 
\end{prop}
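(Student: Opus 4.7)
The plan is to realise the operator $f_\alpha \circ$ (for $\alpha \in R^+_{\lambda_1,\lambda_2}$) as the restriction to $U(\lie{n}^- \otimes t).\mathbbm{1}$ of left multiplication by $f_\alpha \otimes 1$ on $F_{\lambda_1,\lambda_2}$, and then to read off the Leibniz rule from the fact that $[f_\alpha \otimes 1,\,\cdot\,]$ is an inner derivation of $U(\lie{sl}_n \otimes \bc[t]/(t^2))$. The key enabling observation is that the hypothesis $\alpha \in R^+_{\lambda_1,\lambda_2}$ collapses the Serre-type relation on $\mathbbm{1}$ to the linear annihilation $(f_\alpha \otimes 1).\mathbbm{1}=0$, which converts left multiplication into pure commutation.

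More precisely: since $\lambda_1(h_\alpha)=\lambda_2(h_\alpha)=0$ we have $(\lambda_1+\lambda_2)(h_\alpha)=0$, so the relation $(f_\alpha \otimes 1)^{(\lambda_1+\lambda_2)(h_\alpha)+1}.\mathbbm{1}=0$ from \propref{relation} degenerates to $(f_\alpha \otimes 1).\mathbbm{1}=0$ in $F_{\lambda_1,\lambda_2}$. Hence for any $u \in U(\lie{n}^- \otimes t)$,
$$(f_\alpha \otimes 1).u.\mathbbm{1} \;=\; [f_\alpha \otimes 1,\,u].\mathbbm{1}.$$
On the generators one computes $[f_\alpha \otimes 1,\,f_\beta \otimes t] = [f_\alpha,f_\beta] \otimes t$, which, after the implicit normalisation mentioned in the statement, equals $f_{\alpha+\beta} \otimes t$ when $\alpha+\beta \in R^+$ and $0$ otherwise --- precisely the definition of $f_\alpha \circ (f_\beta \otimes t)$. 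Because $[f_\alpha \otimes 1,\,\cdot\,]$ is an associative-algebra derivation and sends the generators of the abelian subalgebra $\lie{n}^- \otimes t$ back into $\lie{n}^- \otimes t$, it preserves $U(\lie{n}^- \otimes t)\cong S(\lie{n}^-)$ and acts there as the unique derivation extending $f_\alpha \circ$ via the Leibniz rule.

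Combining the two steps: left multiplication by $f_\alpha \otimes 1$ sends $u.\mathbbm{1} \in U(\lie{n}^- \otimes t).\mathbbm{1}$ to $[f_\alpha \otimes 1,u].\mathbbm{1} \in U(\lie{n}^- \otimes t).\mathbbm{1}$, and under the identification $S(\lie{n}^-)\cong U(\lie{n}^- \otimes t)$ this is exactly the promised differential operator $f_\alpha \circ$ applied to $u$ before hitting $\mathbbm{1}$. Well-definedness on the quotient $U(\lie{n}^- \otimes t).\mathbbm{1} \cong U(\lie{n}^- \otimes t)/\mathcal{I}_{\lambda_1,\lambda_2}$ is automatic from the already-established action of $f_\alpha \otimes 1$ on $F_{\lambda_1,\lambda_2}$: if $u.\mathbbm{1}=0$ then $(f_\alpha \circ u).\mathbbm{1}=(f_\alpha \otimes 1).(u.\mathbbm{1})=0$, so $\mathcal{I}_{\lambda_1,\lambda_2}$ is preserved by the derivation. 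I do not foresee a genuine obstacle: all content is the identification of an inner derivation with a combinatorially defined one, reducing to a single bracket computation in the current algebra plus the crucial collapsing of the Serre relation.
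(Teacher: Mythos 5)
Your proposal is correct and takes essentially the same approach as the paper: the paper's entire proof is the one-line observation that $\lie{n}^-_{\lambda_1,\lambda_2}.\mathbbm{1}=0$ (which is your collapse of the Serre relation to $(f_\alpha\otimes 1).\mathbbm{1}=0$), after which the inner-derivation $[f_\alpha\otimes 1,\,\cdot\,]$ visibly stabilizes $U(\lie{n}^-\otimes t).\mathbbm{1}$. You have simply unpacked the same argument in full detail, including the bracket computation on generators and the well-definedness step, all of which the paper leaves implicit.
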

\begin{proof}
This follows easily from the fact that $\lie{n}^-_{\lambda_1, \lambda_2}.\mathbbm{1} = 0 \in  F_{\lambda_1, \lambda_2}$.
\end{proof}

In the following we will abbreviate $f_{\alpha_{k, \ell}}$ with $f_{k, \ell}$, $s_{\alpha_{k, \ell}}$ with $s_{k, \ell}$. Denote further $\mathbf{e}_{k,l}$, the basis vector of $\mathbb{R}^{n(n-1)/2}$ having $1$ for $e_{\alpha_{k, \ell}}$ and $0$ elsewhere. So let $\alpha \in R^+_{\lambda_1, \lambda_2}$ and $\gamma = \alpha + \beta \in R^+$, then 
\[
f_\alpha \circ \mathbf{f}^{\mathbf{e}_\beta} = \mathbf{f}^{\mathbf{e}_\gamma}.
\]

\subsection{}
We turn to the case $\lambda_1 = m_i \omega_i, \lambda_2 = m_j \omega_j$. Let $\mathbf{s} \in S(\lambda_1, \lambda_2)$, then $s_{k, \ell} = 0$ for $\ell < j$ or $k > i$. The following is the crucial lemma, which gives an upper bound for the set of highest weight points.
\begin{lem}\label{diagonal}
Let $i\leq j \in I, m_i, m_j \geq 0$, and $p := \min \{i-1, n-1-j \}$, then
\[
U(\lie{n}^- \otimes t).\mathbbm{1} \subset U(\lie{n}^-) \langle (f_{i,j} \otimes t)^{a_0} (f_{i-1, j+1} \otimes t)^{a_1} \cdots (f_{i- p, j+p})^{a_k}. \mathbbm{1} \, | \,  a_q \geq 0, \forall  \,q \rangle.
\]
Moreover we have
\[
S_{hw}(\lambda_1, \lambda_2) \subseteq \{ \mathbf{s} \in S(m_i \omega_i, m_j \omega_j) \, | \, s_{k, \ell} = 0 \text{ if } (k, \ell) \neq (i-q, j+q) \text{ for some } q \}.
\]

\end{lem}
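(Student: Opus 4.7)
The strategy is to use the differential-operator action of $\lie{n}^-_{\lambda_1, \lambda_2}$ on $U(\lie{n}^- \otimes t).\mathbbm{1}$ established in the proposition just above. Since $f_\alpha.\mathbbm{1}=0$ for $\alpha \in R^+_{\lambda_1, \lambda_2}$, acting with $f_\alpha$ on a monomial $\mathbf{f}^{\mathbf{s}}.\mathbbm{1}$ collapses to the Leibniz expansion of $[f_\alpha, \mathbf{f}^{\mathbf{s}}]$, providing the mechanism for moving off-diagonal generators to the diagonal modulo the left $U(\lie{n}^-)$-action.

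First I would handle the base case of a single off-diagonal generator. Given $\alpha_{k,\ell}$ with $k \leq i$, $\ell \geq j$, and $(k,\ell)\neq (i-q, j+q)$, set $a = i-k$, $b = \ell -j$ and assume WLOG $a > b$. Then $\alpha_{k,i-b-1}$ lies in $R^+_{\lambda_1, \lambda_2}$ because $i-b-1<i\leq j$ forces both $\omega_i$ and $\omega_j$ to pair trivially with it. The $\lie{sl}_n$ commutator $[f_{k,i-b-1}, f_{i-b, j+b}] = \pm f_{k,\ell}$ combined with $f_{k,i-b-1}.\mathbbm{1}=0$ yields
\[
(f_{k,\ell} \otimes t).\mathbbm{1} = \pm f_{k,i-b-1} \cdot (f_{i-b, j+b} \otimes t).\mathbbm{1} \in U(\lie{n}^-)\cdot W,
\]
where $W$ denotes the span appearing in the statement. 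The case $a<b$ is symmetric, using $\alpha_{j+a+1, \ell}$ and the diagonal $(f_{i-a, j+a}\otimes t)$.

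Next I would extend to arbitrary monomials by induction on the total off-diagonal count $C(\mathbf{s}) = \sum_{\alpha\text{ off-diagonal}} s_\alpha$. Pick an off-diagonal $(k,\ell)=(i-a, j+b)$ in $\supp(\mathbf{s})$ with $a > b$ (or use the symmetric version), and form $\mathbf{s}'$ by swapping one copy with the diagonal $(i-b, j+b)$: this gives $C(\mathbf{s}') = C(\mathbf{s}) - 1$ and $\hei(\mathbf{s}') = \hei(\mathbf{s}) - (a-b)$. Expanding $f_{k, i-b-1}\cdot \mathbf{f}^{\mathbf{s}'}.\mathbbm{1}$ via Leibniz yields a nonzero multiple of $\mathbf{f}^{\mathbf{s}}.\mathbbm{1}$ together with corrections $\mathbf{f}^{\mathbf{s}''(m)}.\mathbbm{1}$ arising from the other $\beta=\alpha_{i-b,m}$ in $\supp(\mathbf{s}')$ with $m \geq j$ and $m \neq j+b$. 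A direct case analysis on the value of $m-j$ relative to $b$ and $a$ shows $C(\mathbf{s}''(m)) < C(\mathbf{s})$ in every surviving case, so the inductive hypothesis applies. Solving for $\mathbf{f}^{\mathbf{s}}.\mathbbm{1}$ places it in $U(\lie{n}^-)\cdot W$, proving the containment.

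For the \emph{moreover} statement, one tracks the filtration level throughout the reduction. The main reduction term $f_{k, i-b-1} \cdot \mathbf{f}^{\mathbf{s}'}.\mathbbm{1}$ lies in $\mathcal{F}^{<\hei(\mathbf{s})}$ since $\hei(\mathbf{s}') < \hei(\mathbf{s})$. The correction terms, having the same height as $\mathbf{s}$ but strictly smaller $C$, are handled by induction: iterating, they reduce to diagonal monomials of the same weight plus strictly lower-height contributions, and at every step the new monomial is strictly smaller than $\mathbf{s}$ in the monomial order of Section~\ref{PBW}. Hence the image of $\mathbf{f}^{\mathbf{s}}.\mathbbm{1}$ in $\mathfrak{G}^{\deg(\mathbf{s}),\hei(\mathbf{s})}_{\mathbf{s}}(F_{\lambda_1, \lambda_2})$ vanishes, giving $\mathbf{s}\notin S_{hw}(\lambda_1, \lambda_2)$. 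The main obstacle is the case-by-case bookkeeping confirming that the corrections $\mathbf{s}''(m)$ not only decrease $C$ but are strictly smaller in the lex order, in particular the subcase $0 \leq m - j < b$ where one must identify the smallest affected position in the root ordering among $\{(i-a,m), (i-a, j+b), (i-b, m), (i-b, j+b)\}$ and check the sign of the multiplicity change there.
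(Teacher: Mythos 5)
Your proof of the first (containment) statement is essentially correct and takes a genuinely different route from the paper: you use the longer root vector $f_{k,\,i-b-1}$ with $k=i-a$ to jump an above-diagonal entry $(i-a,j+b)$ directly onto the diagonal $(i-b,j+b)$, whereas the paper uses only simple-root operators $f_{k,k}$ to move one column at a time and iterates. The underlying mechanism (Leibniz expansion, $\lie{n}^-_{\lambda_1,\lambda_2}.\mathbbm{1}=0$) is the same, and your induction on the off-diagonal count $C(\mathbf{s})$ is a clean organizing device that the paper does not make explicit. The base-case commutator computation $[f_{k,i-b-1},f_{i-b,j+b}]=\pm f_{k,\ell}$ and the observation that $\alpha_{k,i-b-1}\in R^+_{\lambda_1,\lambda_2}$ are correct.

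For the ``moreover'' statement there is a genuine gap, and it is precisely in the subcase you flag but do not close. For a correction index $m$ with $j\le m<j+b$, the monomial
\[
\mathbf{s}''(m)=\mathbf{s}-\mathbf{e}_{i-a,j+b}+\mathbf{e}_{i-b,j+b}-\mathbf{e}_{i-b,m}+\mathbf{e}_{i-a,m}
\]
has smallest affected root $\alpha_{i-a,m}$ (because $i-a<i-b$ and $m<j+b$), and there the change is $+1$. Hence $\mathbf{s}''(m)>\mathbf{s}$ in the lexicographic order \emph{and} $\hei(\mathbf{s}''(m))=\hei(\mathbf{s})$, so the image of $\mathbf{f}^{\mathbf{s}}.\mathbbm{1}$ in $\mathfrak{G}^{\deg(\mathbf{s}),\hei(\mathbf{s})}_{\mathbf{s}}$ is not shown to vanish. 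Iterating the $C$-reduction on $\mathbf{s}''(m)$ does not rescue this: further reduction steps can move still higher in the order while staying at the same height. The repair is to specify the choice of $(k,\ell)$: take $\ell=j+b$ minimal among second indices carrying off-diagonal support (this is what the paper's conditions (1), (2) and the phrase ``induction along the first row, then along the second row'' enforce). Then for $j\le m<j+b$ the position $(i-b,m)$ is strictly above the diagonal in a row that has already been cleared, so $s_{i-b,m}=0$ and the bad correction never appears; only $m>j+b$ survives, where the smallest affected root is $\alpha_{i-a,j+b}$ with a $-1$, giving $\mathbf{s}''(m)<\mathbf{s}$ as required. Without that choice, the argument for $S_{hw}(\lambda_1,\lambda_2)$ is incomplete.
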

\begin{proof}
We have seen in Corollary~\ref{span-m}, that 
\[ 
\{ \mathbf{f}^{\mathbf{s}}.\mathbbm{1} \, | \, \mathbf{s} \in S(\lambda_1, \lambda_2) \}
\]
 generates $F_{\lambda_1, \lambda_2}$ as a $U(\lie n^{-})$-modules.\\
In our case $\lambda_1 = m_i \omega_i, \lambda_2 =  m_j \omega_j$ and let $\mathbf{s} \in \bz_{\geq 0}^{n(n-1)/2}$ with $s_{p, q} = 0$ for $q < j$ or $p > i$. Let $k, \ell$ be such that $ i-k > \ell - j$, $s_{k, \ell} \neq 0$ and 
$$
\begin{array}{ll}
\text{Condition }(1):&  s_{r, \ell} = 0, \;  \forall \, r=1, \ldots , k-1 \\
\text{Condition }(2): &  s_{r,s} = 0 \text{ if } r < k \text{ and } s< j + i -r, \text{ then } 
\end{array}
$$
So $\mathbf{s}$ is of the form:
\[
\left(
\begin{array}{cccccccccccc}
0 &     \ldots         &      &      0     &     0                &             \ldots                              &      &  &             \ldots         &             0       & s_{i,j}\\
  &           &  &       &                   &            \ldots                               &       &  &       \ldots            &             s_{i-1, j +1}       & s_{i,j+1}\\
\vdots &   \ddots         &     &   \vdots         &        \vdots              &                                &       &           &     \iddots &               & \vdots\\
 0&  &  &  	0	&	  0           & \ldots  &0  &   s_{i+j - \ell+1, \ell -1}  & \ldots  & & s_{i, \ell-1}\\
0 & \ldots & 0 & s_{k, \ell} & s_{k+1, \ell} & \ldots & s_{i+j - \ell, \ell} & &  \ldots&& s_{i, \ell}\\
s_{1, \ell+1} & \ldots &  &  &  & \ldots & & &  \ldots&& s_{i, \ell+1}\\
\vdots & \ddots &  &  & & \ddots& & &\ddots &&\vdots\\
s_{1, n-1} & \ldots &  & & & \ldots &  & &  \ldots&& s_{i, n-1}
\end{array}
\right)
\]
We consider
\[
f_{k,k} \circ \left(   \mathbf{f}^{\mathbf{s}  - \mathbf{e}_{k, \ell} + \mathbf{e}_{k+1,\ell}}\right).\mathbbm{1}
\]
By expanding this we see that
\[
(s_{k+1, \ell}+1)\mathbf{f}^{\mathbf{s}}. \mathbbm{1} = \left[ f_{k,k} \circ \left(   \mathbf{f}^{\mathbf{s} - \mathbf{e}_{{k, \ell}} + \mathbf{e}_{{k+1,\ell}}}\right) - 
\sum\limits_{z = \ell +1}^{n-1} s_{k+1, z}
 \left(  \mathbf{f}^{\mathbf{s} - \mathbf{e}_{k,\ell}  + \mathbf{e}_{k+1, \ell} + \mathbf{e}_{k,  z} - \mathbf{e}_{k+1, z} } \right)\right]. \mathbbm{1}. 
\]
By iterating this we see that 
\[ 
\mathbf{f}^{\mathbf{s}}. \mathbbm{1} \in \sum_{ \mathbf{n}_\alpha} U(\lie{n}^-) \mathbf{f}^{\mathbf{n}}. \mathbbm{1}
\]
where the sum is over all $\mathbf{n}  \in \bz_{\geq 0}^{n(n-1)/2}$ satisfying Condition (1) and (2) and moreover $n_{k,\ell}=0$.\\
Using induction along the first row, then along the second row etc, we see that 
\[ 
\mathbf{f}^{\mathbf{s}}. \mathbbm{1} \in \sum_{ \mathbf{n}} U(\lie{n}^-) \mathbf{f}^{\mathbf{n}}. \mathbbm{1}
\]
where $n_{{k,\ell}} = 0 $ for all $k,\ell$ with $i - k > \ell - j$.\\
A similar computation for the roots below the diagonal shows that we can assume also $n_{{ k , \ell}} = 0$ for all $(k, \ell) \neq (i - q, j + q)$ for some $q$. This proves the first part of the lemma. The claim on highest weight points follows now from the definition of $F_{\lambda_1, \lambda_2}$, namely
\[ (f_{i - q, j+ q} \otimes t)^K. \mathbbm{1} = 0 \text{ for } K \geq \min\{ m_i , m_j \}.\]
\end{proof} 
The following gives a stricter upper bound for the set of highest weight points.
\begin{prop}
Let $i\leq j \in I, m_i, m_j \geq 0$, $p = \min \{i-1, n-1-j \}$, then
\[
S_{hw}(\lambda_1, \lambda_2) \subseteq \{ a_0 \mathbf{e}_{i,j} + a_1 \mathbf{e}_{i-1, j+1} + \ldots + a_p \mathbf{e}_{i-p, j+p} \, | \, \min\{ m_i, m_j \} \geq a_0 \geq a_1 \geq \ldots \geq a_ p \geq 0 \}.
\]
\end{prop}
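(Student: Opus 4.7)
My plan is to combine two facts already in hand: Lemma~\ref{diagonal}, which forces any $\mathbf{s}\in S_{hw}(\lambda_1,\lambda_2)$ to be supported on the anti-diagonal, and the remark following Corollary~\ref{cor-upper}, which forces the weight $\mu := (\lambda_1+\lambda_2) - \wt(\mathbf{s})$ to be dominant. The claimed inequalities then drop out of a direct fundamental-weight expansion of $\wt(\mathbf{s})$.

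By Lemma~\ref{diagonal}, write $\mathbf{s} = \sum_{q=0}^{p} a_q\, \mathbf{e}_{i-q,\, j+q}$ with each $a_q\in\bz_{\geq 0}$. Using the identity $\alpha_{k,\ell} = \omega_k + \omega_\ell - \omega_{k-1} - \omega_{\ell+1}$ (with the convention $\omega_0 = \omega_n = 0$), expand
\[
\wt(\mathbf{s}) \;=\; \sum_{q=0}^{p} a_q\bigl(\omega_{i-q} + \omega_{j+q} - \omega_{i-q-1} - \omega_{j+q+1}\bigr).
\]
Under $i\leq j$ and $0\leq q \leq p \leq \min\{i-1, n-1-j\}$, the indices $i-q,\, i-q-1,\, j+q,\, j+q+1$ are pairwise disjoint and avoid $\{i,j\}$ except at $q=0$. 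Collecting coefficients shows that for $1\leq q\leq p$ the coefficient of $\omega_{i-q}$ in $\mu$ and the coefficient of $\omega_{j+q}$ in $\mu$ both equal $a_{q-1} - a_q$. Since $1 \leq i-q$ and $j+q \leq n-1$ by the range of $q$, these are genuine fundamental weights, so dominance of $\mu$ forces $a_{q-1}\geq a_q$ for $q = 1,\ldots, p$, giving the weakly decreasing chain.

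For the top bound $a_0\leq \min\{m_i,m_j\}$, I use that $\mathbf{s}\in S(\lambda_1,\lambda_2)$: the singleton Dyck path $(\alpha_{i,j})$ is admissible with $a_{\alpha_{i,j}} = \min\{m_i\omega_i(h_{\alpha_{i,j}}),\, m_j\omega_j(h_{\alpha_{i,j}})\} = \min\{m_i,m_j\}$, so $a_0 = s_{i,j} \leq \min\{m_i,m_j\}$. The endpoint $a_p\geq 0$ is automatic.

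\textbf{Main obstacle.} The computation is essentially bookkeeping once Lemma~\ref{diagonal} is available; the only subtleties are two boundary issues. First, in the case $i=j$ the coefficient of $\omega_i$ in $\mu$ becomes $m_i+m_j - 2a_0$, so dominance by itself would give only $a_0 \leq (m_i+m_j)/2$; the sharper bound $a_0\leq\min\{m_i,m_j\}$ is precisely what the Dyck-polytope step supplies. Second, when $q=p$ the fundamental weights $\omega_{i-p-1}$ or $\omega_{j+p+1}$ may degenerate to $\omega_0$ or $\omega_n$ (and hence vanish), but this only affects the trivial constraint $a_p\geq 0$, which holds by non-negativity of the exponents in any case.
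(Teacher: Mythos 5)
Your proof is correct and follows essentially the same route as the paper: invoke Lemma~\ref{diagonal} to confine the support to the anti-diagonal, then use the fundamental-weight expansion of $\alpha_{k,\ell}=\omega_k+\omega_\ell-\omega_{k-1}-\omega_{\ell+1}$ together with dominance of $\lambda_1+\lambda_2-\wt(\mathbf{s})$ (the observation recorded just before Corollary~\ref{cor-upper}) to extract the weakly decreasing chain $a_0\geq a_1\geq\cdots\geq a_p$. One small advantage of your write-up is that you spell out precisely where the top bound $a_0\leq\min\{m_i,m_j\}$ comes from: the paper's final display supplies the coefficients $m_i-a_0$ and $m_j-a_0$, which yields the bound only when $i<j$, whereas you correctly note that when $i=j$ dominance alone degrades to $a_0\leq(m_i+m_j)/2$ and the sharper bound must come from the singleton Dyck path $(\alpha_{i,j})$ already encoded in $S(\lambda_1,\lambda_2)$ via Lemma~\ref{diagonal}. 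This boundary point is implicit in the paper but worth isolating as you did.
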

\begin{proof}
We just have to check that these are the only points of the ones described in Lemma~\ref{diagonal} whose monomials applied on $\mathbbm{1}$ give vectors of dominant weight. For this, the weight of the vector 
\[  (f_{i,j} \otimes t)^{a_0} (f_{i-1, j+1} \otimes t)^{a_1} \cdots (f_{i- p, j+p})^{a_k}. \mathbbm{1}\]
is equal to 
\[
m_i \omega_i + m_j \omega_j + \sum_{q = 0}^{p} a_p (\omega_{i-q} + \omega_{j+q} -  \omega_{i-q-1} - \omega_{j+q+1} ).
\]
This is equal to 
\[
(m_i - a_0) \omega_i + (a_0 - a_1) \omega_{i-1} + \ldots + (a_{p-1} - a_p) \omega_p + (m_j - a_0) \omega_j + \ldots + (a_{p-1} - a_p)\omega_{j+p}
\]
which is dominant if and only if $a_0 \geq a_1 \geq \ldots \geq a_p$.
\end{proof}

Keep the notation from the proof and set $b_i = a_i - a_{i+1} \geq 0$, then the weight of 
\[  (f_{i,j} \otimes t)^{a_0} (f_{i-1, j+1} \otimes t)^{a_1} \cdots (f_{i- p, j+p})^{a_k}. \mathbbm{1}\]
is equal
\[
m_i\omega_i + m_j \omega_j + \sum_{q = 0}^{\min\{ i-1, n-1 -j\}} b_q(\omega_{i-q} + \omega_{j+q} - \omega_{i} - \omega_{j})
\]
with 
\[
 \sum_{q = 0}^{\min\{ i-1, n-1 -j\}} b_q = a_0 \leq \min\{m_i, m_j\}.
\]
This implies
\begin{thm}
Let $i, j\in I$, $m_i, m_j \geq 0$, then
\[
a_{m_i \omega_i, m_j \omega_j}^{\tau} = c_{m_i \omega_i, m_j \omega_j}^{\tau}
\]
for all $\tau \in P^+$ and hence
\[
F_{m_i \omega_i, m_j \omega_j} \cong_{\lie{sl}_n \otimes \bc[t]} V(m_i \omega_i)_{c_1} \ast V(m_j \omega_j)_{c_2}
\]
for all $c_1 \neq c_2 \in \bc$.
\end{thm}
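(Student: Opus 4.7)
The plan is to sandwich $a_{m_i \omega_i, m_j \omega_j}^\tau$ between two copies of $c_{m_i \omega_i, m_j \omega_j}^\tau$. The lower bound $a^\tau \geq c^\tau$ is immediate from Lemma~\ref{whatever}, so everything reduces to showing the reverse inequality $a^\tau \leq c^\tau$. To control $a^\tau$ from above, I would invoke Corollary~\ref{cor-upper}, which rewrites the multiplicity as the cardinality of
\[
\{\mathbf{s} \in S_{hw}(m_i \omega_i, m_j \omega_j) \mid \wt(\mathbf{s}) = m_i \omega_i + m_j \omega_j - \tau\}.
\]
Since $\min\{\lambda_1(h_\alpha), \lambda_2(h_\alpha)\}$ is symmetric in $\lambda_1$ and $\lambda_2$, and $F_{\lambda_1,\lambda_2}$ depends only on this data, there is no loss in assuming $i \leq j$, placing us within the setup of the preceding lemma and proposition.

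The two combinatorial inputs ready at hand are, first, the preceding proposition restricting $S_{hw}(m_i \omega_i, m_j \omega_j)$ to the finite set of tuples $(a_0, \ldots, a_p)$ with $\min\{m_i, m_j\} \geq a_0 \geq a_1 \geq \cdots \geq a_p \geq 0$ and $p = \min\{i-1, n-1-j\}$; and second, under the substitution $b_q := a_q - a_{q+1}$ (with $a_{p+1} := 0$), a bijection with tuples $(b_0, \ldots, b_p) \in \bz_{\geq 0}^{p+1}$ satisfying $\sum_q b_q = a_0 \leq \min\{m_i, m_j\}$, where the weight computation displayed just before the theorem expresses the corresponding $\tau$ as
\[
\tau = m_i \omega_i + m_j \omega_j + \sum_{q=0}^{p} b_q \bigl(\omega_{i-q} + \omega_{j+q} - \omega_i - \omega_j\bigr).
\]

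Comparing this parametrization with the Pieri-type description of $c_{m_i \omega_i, m_j \omega_j}^\tau$ from the proposition at the start of the section: for each $\tau$ of the prescribed LR shape, there is exactly one admissible $(b_q)$-tuple, hence at most one highest weight point with $\wt(\mathbf{s}) = m_i\omega_i + m_j\omega_j - \tau$, giving $a^\tau \leq 1 = c^\tau$; for every other $\tau \in P^+$ no admissible point occurs, so $a^\tau = 0 = c^\tau$. Combined with Lemma~\ref{whatever} this yields the equality of multiplicities, and the isomorphism to the fusion product then follows from the corollary to that lemma, since a surjection with matching $\lie{sl}_n$-multiplicities is an isomorphism of $\lie{sl}_n \otimes \bc[t]$-modules by a dimension count. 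The only non-routine step is the set-theoretic bound on $S_{hw}$, which was already carried out in the preceding lemma and proposition using the new $\lie{n}^-_{\lambda_1,\lambda_2}$-differential operators that collapse the support of highest weight points onto the anti-diagonal $\{(i-q, j+q)\}_{q=0}^p$; given that, the remainder is a match of two combinatorial descriptions.
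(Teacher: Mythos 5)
Your argument is correct and follows the paper's route precisely: sandwich $a^\tau$ between $c^\tau$ (below, from Lemma~\ref{whatever}) and $c^\tau$ (above, via Corollary~\ref{cor-upper} plus the restriction of $S_{hw}$ to anti-diagonal staircase points by the new $\lie{n}^-_{\lambda_1,\lambda_2}$-operators, then the change of variables $b_q = a_q - a_{q+1}$ to match the Pieri-type LR formula), and close with the fusion corollary. The one small addition you make — invoking symmetry of $\min\{\lambda_1(h_\alpha),\lambda_2(h_\alpha)\}$ to justify the standing assumption $i \le j$ — is a worthwhile clarification the paper leaves implicit.
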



\section{The Pieri rules}\label{pieri}
In this section we want to compute the $\lie{sl}_n$ decomposition on $F_{\lambda, \omega_j}$ and $F_{\lambda, k \omega_1}$. Mainly, we want to identify them with the fusion product of $V(\lambda)$ and $V(\omega_j)$ (resp. $V(k \omega_1)$). As for the Kirillov-Reshetikhin modules we will show that $a_{\lambda, \omega_j}^{\tau} = c_{\lambda, \omega_j}^{\tau}$ for all $\tau$ and similar for $k \omega_1$. Let us start with the latter case.\\
On one hand, using again the Young tableaux combinatorics from \cite{N2002}, we see that the highest weight vectors of $V(\lambda) \otimes V(k \omega_1)$ are parameterized by the set 
\[
T_{\lambda, k \omega_1} := \{ ( b_1, \ldots, b_{n} ) \in \mathbb{Z}^{n}_{\geq 0} \, |, \,  b_1 + \ldots + b_{n} =  k \text{ and } b_j \leq m_{j-1} \; \forall \; j = 2, \ldots, n \}
\]
where $\lambda = \sum m_i \omega_i$. \\
Let $\mathbf{s} \in S_{hw}(\lambda, k\omega_1) \subseteq S(\lambda, k \omega_1)$, then $s_{i,j} = 0$ if $i \neq 1$. So there is no confusion if we write in the following $s_j$ for $s_{1,j}$. We have $s_{1} + \ldots + s_{n-1} \leq k$. Suppose now $s_{j} > m_j$ for some $j > 1$, then by definition of $F_{\lambda, k \omega_1}$
\[
f_{j}^{s_{j}}. \mathbbm{1} = 0
\]
This implies, recall the notation of Section~\ref{KR}, 
\[
\mathbf{f}^{\mathbf{s} + s_j \mathbf{e}_{1, j-1} - s_j \mathbf{e}_{1, j}}f_{j}^{s_{j}}. \mathbbm{1} = 0
\]
Using commutator relations we have for some constants $c_k$
\[
\sum_{k = 0}^{s_j}c_k f_{j}^{k}  \mathbf{f}^{\mathbf{s} + k ( \mathbf{e}_{1, j-1} -  \mathbf{e}_{1, j})}. \mathbbm{1} = 0
\]
This implies that 
\[
\mathbf{f}^{\mathbf{s}}.\mathbbm{1} \in \sum_{\mathbf{n}} U(\lie n^-) \mathbf{f}^{\mathbf{n}}.\mathbbm{1}
\]
for some $\mathbf{n}$ with $n_\ell = s_\ell$ for $\ell > j$ and $ n_j < s_j$. But this is a contradiction to $\mathbf{s}\in S_{hw}(\lambda, k\omega_1)$.\\
This implies that if $\mathbf{s} \in S_{hw}(\lambda, k\omega_1)$ we have 
\[
s_{i,j} = 0 \text{ for } i \neq 1 \; , \; s_{j} \leq m_j \; , \;  s_{1} + \ldots + s_{n-1} \leq k.
\]
This implies $|S_{hw}(\lambda, k\omega_1) | \leq |T_{\lambda, k \omega_1} |$. Using now Lemma~\ref{whatever} we have equality here and so 
\begin{lem}
For $\lambda \in P^+$, $k \geq 0$, we have
\[
a_{\lambda, k \omega_1}^{\tau} = c_{\lambda, k\omega_1}^{\tau} \text{ for all } \tau \in P^+
\]
and so for all $c_1 \neq c_2 \in \bc$
\[
F_{\lambda, k \omega_1} \cong V(\lambda)_{c_1}\ast V(k\omega_1)_{c_2}.
\]
\end{lem}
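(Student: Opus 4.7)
The plan is to establish the reverse inequality $a_{\lambda,k\omega_1}^\tau\le c_{\lambda,k\omega_1}^\tau$; Lemma~\ref{whatever} supplies the other direction, and the isomorphism statement follows from the corollary to Lemma~\ref{whatever} once equality of multiplicities is known. I will match two independent enumerations: via Corollary~\ref{cor-upper}, the left-hand side is controlled by the set $S_{hw}(\lambda,k\omega_1)$ of highest weight points, while the classical Young-tableaux rule from \cite{N2002} identifies the highest weight vectors of $V(\lambda)\otimes V(k\omega_1)$ with the set $T_{\lambda,k\omega_1}$ described above.

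The first step is a routine reduction on the polytope side. Since $k\omega_1(h_{\alpha_{i,j}})=0$ for $i>1$, every coordinate $s_{i,j}$ with $i>1$ already vanishes in $S(\lambda,k\omega_1)$. What remains is a single row $s_1,\ldots,s_{n-1}$ with $s_j:=s_{1,j}$, and the Dyck path along this first row forces $s_1+\cdots+s_{n-1}\le\min\{m_1+\cdots+m_{n-1},k\}\le k$.

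The main obstacle is to promote the coarser Dyck-bound $s_j\le\min\{m_1+\cdots+m_j,k\}$ to the sharper constraint $s_j\le m_j$ for each $j>1$. Here I would exploit the integrability relation $(f_{\alpha_j}\otimes 1)^{m_j+1}.\mathbbm{1}=0$, which holds because $\lambda_2(h_{\alpha_j})=0$. Using the bracket $[f_{\alpha_j}\otimes 1,f_{1,j-1}\otimes t]=\pm(f_{1,j}\otimes t)$, one commutes $(f_{\alpha_j}\otimes 1)^{s_j}$ past $(f_{1,j-1}\otimes t)^{s_j}$ to obtain an identity of the shape $\sum_{k=0}^{s_j}c_k(f_{\alpha_j}\otimes 1)^k\mathbf{f}^{\mathbf{s}+k(\mathbf{e}_{1,j-1}-\mathbf{e}_{1,j})}.\mathbbm{1}=0$ with $c_0\neq 0$. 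Transferring weight mass from $\alpha_{1,j}$ onto $\alpha_{1,j-1}$ strictly decreases the height index $\ell=\hei(-\wt\mathbf{m})$, so each term with $k\ge 1$ lies in $U(\lie n^-)U(\lie n^-\otimes t)^{s,<\ell}.\mathbbm{1}=\mathcal{F}^{<\ell}(F_{\lambda,k\omega_1}^s)$. Solving for $\mathbf{f}^{\mathbf{s}}.\mathbbm{1}$ puts it into this lower filtration piece, hence $\mathbf{f}^{\mathbf{s}}.\mathbbm{1}$ vanishes in $\mathfrak{G}^{s,\ell}_{\mathbf{s}}(F_{\lambda,k\omega_1})$, contradicting $\mathbf{s}\in S_{hw}$.

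Once $s_j\le m_j$ is established, the weight-preserving map $(s_1,\ldots,s_{n-1})\mapsto(k-\sum s_j,s_1,\ldots,s_{n-1})$ provides an injection $S_{hw}(\lambda,k\omega_1)\hookrightarrow T_{\lambda,k\omega_1}$. Summing Corollary~\ref{cor-upper} over $\tau$ then yields $\sum_\tau a_{\lambda,k\omega_1}^\tau\le|T_{\lambda,k\omega_1}|=\sum_\tau c_{\lambda,k\omega_1}^\tau$, which combined with the pointwise lower bound from Lemma~\ref{whatever} forces equality for every $\tau$. The isomorphism $F_{\lambda,k\omega_1}\cong V(\lambda)_{c_1}\ast V(k\omega_1)_{c_2}$ then follows from the corollary to Lemma~\ref{whatever}.
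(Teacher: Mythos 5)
Your argument is correct and follows the paper's proof essentially step for step: the reduction to the first row via the polytope constraints, the promotion of the coarse Dyck bound to $s_j\le m_j$ for $j>1$ by bracketing $f_{\alpha_j}^{s_j}.\mathbbm{1}=0$ against $(f_{1,j-1}\otimes t)$-factors and observing the height drop, the injection into $T_{\lambda,k\omega_1}$, and the sandwich with Lemma~\ref{whatever}. The only detail left implicit (in both your write-up and the paper's) is that $s_1\le m_1$ comes for free from the single-root Dyck constraint $s_{1,1}\le\min\{m_1,k\}$ rather than from the commutator argument, but this is an immediate observation.
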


\subsection{}
We consider here the $\omega_j$-case. As before, using Young Tableaux combinatorics from \cite{N2002}, we have that the highest weight vectors of $V(\lambda) \otimes V(\omega_j)$ are parameterized by the set ($\lambda = \sum m_i \omega_i$)
\[
T_{\lambda, \omega_j} := \{ ( b_1 < \ldots <  b_{j} )   \, |, \, b_i \in \{1, \ldots, n\} \text{ s.t.: }   b_{i-1} \neq b_{i} - 1 \Rightarrow m_{b_i-1} \neq 0\}.
\]
Let $\mathbf{s} \in S_{hw}(\lambda, \omega_j) \subseteq S(\lambda, \omega_j)$, then $s_{k, \ell} = 0$ if $\ell > j $ or $k < j$. We have for all Dyck path $\mathbf{p}$: $\beta_1 + \ldots + \beta_s \leq 1$. This implies that $s_\beta \in \{ 0, 1\}$ for all $\beta$ and even more, that the support of $\mathbf{s}_\alpha$ is of the form
\[
 \{ \alpha_{i_1, j_1} , \ldots, \alpha_{i_\ell, j_\ell} \, | \, i_1 < i_2 \ldots < i_\ell \leq j \leq j_\ell < \ldots < j_1 \}
\]
Let us parametrize this set as follows. Let $ \alpha_{i_1, j_1} , \ldots, \alpha_{i_\ell, j_\ell}$ be given from the set and denote 
\[
 \{ p_1 < \ldots < p_{j-\ell} := \{1, \ldots, j\}\setminus \{ i_1, \ldots, i_\ell \} .
\]
 Then we associate
\[
 \alpha_{i_1, j_1} , \ldots, \alpha_{i_\ell, j_\ell} \leftrightarrow (p_1 < p_2 < \ldots < p_{j-\ell} < j_{\ell} + 1 < \ldots < j_{1} + 1)
\]
This gives a one to one correspondence to $j$-tuples of strictly increasing integers smaller equals to $n$, hence parameterizes a basis of $V(\omega_k)$. \\
Since we are interested in the highest weight vectors, we can exclude these tuples corresponding to vectors in $F_{\lambda, \omega_j}$ of non-dominant weight. The weight of such a vector $(p_1 < p_2 < \ldots < p_j)$  is given by
\[
\lambda + ( \omega_j - ( - \omega_{p_1-1} + \omega_{p_1 } -  \omega_{p_2-1} + \omega_{p_2} -  \ldots - \omega_{p_{j-1}-1} + \omega_{p_{j-1}} \omega_{p_j-1} + \omega_{p_j}).
\]
With a short calculation one sees that this is dominant if and only if $p_i \neq p_{i+1} - 1 \Rightarrow m_{p_i}-1 > 0$.\\
This implies that $a_{\lambda, \omega_j}^{\tau} \leq c_{\lambda, \omega_j}^{\tau}$ for all $\tau \in P^+$. Using Lemma~\ref{whatever} implies now equality for all $\tau$ which proves\begin{lem}
For $\lambda \in P^+$, $j \in \{ 1, \ldots, n-1\} = I$, we have
\[
a_{\lambda, \omega_j}^{\tau} = c_{\lambda, \omega_j}^{\tau} \text{ for all } \tau \in P^+
\]
and so for all $c_1 \neq c_2 \in \bc$
\[
F_{\lambda, \omega_j} \cong V(\lambda)_{c_1}\ast V(\omega_j)_{c_2}.
\]
\end{lem}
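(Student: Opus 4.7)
The plan is to combine Lemma~\ref{whatever}, which supplies $a_{\lambda,\omega_j}^\tau \geq c_{\lambda,\omega_j}^\tau$, with a matching upper bound on the multiplicities obtained by counting highest weight points in $S(\lambda,\omega_j)$ via Corollary~\ref{cor-upper}. The fusion isomorphism will then follow from a graded dimension comparison against the surjection already produced in Lemma~\ref{whatever}.

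First, I would pin down the combinatorial shape of $\mathbf{s} \in S(\lambda,\omega_j)$. Since $\omega_j(h_{\alpha_{k,\ell}})$ equals $1$ when $k \leq j \leq \ell$ and $0$ otherwise, the tuple $\mathbf{a}$ of Proposition~\ref{prop-maps} satisfies $a_\alpha \in \{0,1\}$ everywhere and vanishes outside the hook $\{\alpha_{k,\ell} : k \leq j \leq \ell\}$. The Dyck path inequalities then force $\mathbf{s}$ to be a $\{0,1\}$-vector supported inside that hook, and if two support roots $(k_1,\ell_1)$, $(k_2,\ell_2)$ were comparable (say $k_1 \leq k_2$ and $\ell_1 \leq \ell_2$) they would sit on a common Dyck path whose base value is at most $1$; so the support is an antichain of the form $\alpha_{i_1,j_1},\ldots,\alpha_{i_\ell,j_\ell}$ with $i_1 < \cdots < i_\ell \leq j \leq j_\ell < \cdots < j_1$.

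Next I would set up the bijection
\[
\{\alpha_{i_1,j_1},\ldots,\alpha_{i_\ell,j_\ell}\} \;\longleftrightarrow\; (p_1 < \cdots < p_{j-\ell} < j_\ell + 1 < \cdots < j_1 + 1),
\]
where $\{p_1,\ldots,p_{j-\ell}\} = \{1,\ldots,j\}\setminus\{i_1,\ldots,i_\ell\}$. This is manifestly a bijection onto the set of strictly increasing $j$-tuples in $\{1,\ldots,n\}$, i.e.\ onto the standard basis of $V(\omega_j)$. A direct weight calculation, using $\alpha_{i,m} = -\omega_{i-1}+\omega_i+\omega_m-\omega_{m+1}$ to rewrite $\lambda + \omega_j - \sum_r \alpha_{i_r,j_r}$ in fundamental weights indexed by the tuple $(p_1,\ldots,p_j)$, shows that the resulting weight is dominant precisely when $p_{r+1} \neq p_r+1$ implies $m_{p_r - 1} > 0$, which is the defining condition of $T_{\lambda,\omega_j}$.

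Putting these together, only the tuples in $T_{\lambda,\omega_j}$ can index nonzero graded components, so Corollary~\ref{cor-upper} yields $a_{\lambda,\omega_j}^\tau \leq |T_{\lambda,\omega_j} \cap \wt^{-1}(\lambda+\omega_j-\tau)| = c_{\lambda,\omega_j}^\tau$. With the opposite inequality from Lemma~\ref{whatever} we obtain equality for every $\tau$, and the surjection $F_{\lambda,\omega_j} \twoheadrightarrow V(\lambda)_{c_1}\ast V(\omega_j)_{c_2}$ of that lemma then has matching source and target dimensions, hence is an isomorphism. The main delicate step will be the weight-dominance bookkeeping in paragraph three: the index shift $j_r \mapsto j_r + 1$ and the asymmetric roles of the $i_\bullet$ and $j_\bullet$ indices in the bijection make it easy to misidentify which $m_{p_\bullet}$ must be positive, so I would expect that verification to require the most care; the rest of the argument is essentially dictated by the Dyck path shape of $S(\lambda,\omega_j)$.
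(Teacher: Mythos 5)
Your proposal follows the paper's argument essentially verbatim: extract the $\{0,1\}$-antichain structure of the support inside the hook from the Dyck path constraints, biject with strictly increasing $j$-tuples via the same $j_r \mapsto j_r+1$ shift, read off the dominance condition to land in $T_{\lambda,\omega_j}$, and close with Lemma~\ref{whatever}. Incidentally you state the vanishing condition on $s_{k,\ell}$ correctly ($k>j$ or $\ell<j$), whereas the paper's statement of that line appears to have the inequalities reversed.
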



\section{Partial order and Weyl modules}\label{poset}
In \cite{CFS14} a partial order on pairs of dominant weight has been introduced. Let us recall here briefly the construction. Fix $\lambda \in P^+$ and consider the partitions of $\lambda$ with two parts
\[
P(\lambda,2) = \{ (\lambda_1, \lambda_2) \in P^+ \times P^+ \, | \, \lambda_1 + \lambda_2 = \lambda \}.
\]
By abuse of notation we denote by $P(\lambda,2)$ the orbits of the natural $S_2$ action on $P(\lambda,2)$. In \cite{CFS14}, the following partial order has been introduced on $P(\lambda,2)$: Let $\blambda = ( \lambda_1, \lambda - \lambda_1), \bmu = (\mu_1, \lambda - \mu_1) \in P(\lambda,2)$, then
\[
\blambda \preceq \bmu : \Leftrightarrow \; \forall \, \alpha \in R^+ : \min\{ \lambda_1(h_\alpha), (\lambda - \lambda_1)(h_\alpha)\} \leq \min \{ \mu_1(h_\alpha), (\mu - \mu_1)(h_\alpha) \}.
\]
Certain properties of this poset were proved in \cite{CFS14} (and \cite{F14b}),  e.g. there exists a smallest element in $P(\lambda,2)$, the orbit of $(\lambda, 0)$. It is less obvious that there exists also a unique maximal element: let $\lambda = \sum_{i = 1}^{n-1} m_i \omega_i$, and let $\{1\leq i_1 < \ldots <  i_k \} =  I_{odd}$ be the indices such that $m_i$ is odd. Then $\blambda^{\max}  = (\lambda_1^{\max}, \lambda_2^{\max})$ given by
\[\lambda_1^{\max}=
  \sum_{j=1}^k ((m_{i_j}+ (-1)^j)/2)\omega_{i_s}+\sum_{i\in I \setminus I_{odd}}(m_i/2)\omega_i, \qquad
  \lambda_2^{\max}=\lambda-\lambda_1^{\max}, 
\]
is the unique maximal orbit in $P(\lambda,2)$, \cite[Proposition 5.3]{CFS14}.\\
It was further shown that the cover relation of $\preceq$ on $P(\lambda,2)$ is determined by the Weyl group action \cite[Proposition 6.1]{CFS14}.

\subsection{}
We want to relate the partial order and the modules $F_{\lambda_1, \lambda_2}$. Namely, we want to prove the following lemma:
\begin{lem}\label{fusion-sp}
Suppose $(\lambda_1, \lambda - \lambda_1) \preceq (\mu_1, \lambda - \mu_1) \in P(\lambda,2)$, then there exists a canonical surjective map of $\lie{sl}_n \otimes \bc[t]$-modules
\[
F_{\mu_1, \mu - \mu_1} \twoheadrightarrow F_{\lambda_1, \lambda - \lambda_1}.
\]
\end{lem}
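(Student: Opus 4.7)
The plan is to show that the generator of $F_{\lambda_1, \lambda - \lambda_1}$ satisfies all of the defining relations of $F_{\mu_1, \lambda - \mu_1}$, so that the universal property furnishes the desired surjection. The key observation is that all defining relations listed in Proposition~\ref{relation} depend on $(\lambda_1, \lambda - \lambda_1)$ and $(\mu_1, \lambda - \mu_1)$ only through the sum $\lambda = \lambda_1 + (\lambda - \lambda_1) = \mu_1 + (\lambda - \mu_1)$, except for the exponents
\[
m_\alpha(\blambda) := \min\{\lambda_1(h_\alpha),\, (\lambda - \lambda_1)(h_\alpha)\}, \qquad m_\alpha(\bmu) := \min\{\mu_1(h_\alpha),\, (\lambda - \mu_1)(h_\alpha)\}
\]
appearing in $(f_\alpha \otimes t)^{m_\alpha(\cdot) + 1}.w = 0$.

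First I would invoke Proposition~\ref{relation} to reduce the construction of a map $F_{\mu_1, \lambda-\mu_1} \twoheadrightarrow F_{\lambda_1, \lambda - \lambda_1}$ to verifying that the cyclic generator $w \in F_{\lambda_1, \lambda - \lambda_1}$ satisfies the defining relations of $F_{\mu_1, \lambda-\mu_1}$. The relations involving $\lie{n}^+ \otimes \bc[t]$, $\lie{h} \otimes t\bc[t]$, $\lie{n}^- \otimes t^2\bc[t]$, the Cartan action $(h \otimes 1 - \lambda(h)).w = 0$, and the integrable relation $(f_\alpha \otimes 1)^{\lambda(h_\alpha)+1}.w = 0$ all hold in $F_{\lambda_1, \lambda - \lambda_1}$ by its own definition, since they depend only on $\lambda$.

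The remaining relations are $(f_\alpha \otimes t)^{m_\alpha(\bmu)+1}.w = 0$ for every $\alpha \in R^+$. By hypothesis $\blambda \preceq \bmu$, which by definition gives $m_\alpha(\blambda) \leq m_\alpha(\bmu)$ for every positive root $\alpha$. In $F_{\lambda_1, \lambda-\lambda_1}$ we have $(f_\alpha \otimes t)^{m_\alpha(\blambda)+1}.w = 0$, and since powers of a single element form an increasing annihilator, we conclude $(f_\alpha \otimes t)^{N}.w = 0$ for every $N \geq m_\alpha(\blambda) + 1$, in particular for $N = m_\alpha(\bmu)+1$. Hence all the defining relations of $F_{\mu_1, \lambda - \mu_1}$ are satisfied by the generator of $F_{\lambda_1, \lambda - \lambda_1}$, and the universal property yields a surjective $\lie{sl}_n \otimes \bc[t]$-module map $F_{\mu_1, \lambda - \mu_1} \twoheadrightarrow F_{\lambda_1, \lambda - \lambda_1}$, which is canonical since it sends the canonical generator to the canonical generator. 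There is no real obstacle here; the only subtle point is ensuring one has the correct direction of the inequality, and indeed the partial order is set up precisely so that a larger pair in the poset gives larger (hence weaker) exponents in the $(f_\alpha \otimes t)$-relations, which corresponds to the bigger module mapping onto the smaller one.
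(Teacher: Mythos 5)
Your proof is correct and follows essentially the same approach as the paper's: compare the defining relations from Proposition~\ref{relation}, note that all but the $(f_\alpha\otimes t)$-relations depend only on $\lambda$, and use the definition of $\preceq$ to conclude that the stronger $(f_\alpha\otimes t)^{m_\alpha(\blambda)+1}$-relation in $F_{\lambda_1,\lambda-\lambda_1}$ implies the weaker $(f_\alpha\otimes t)^{m_\alpha(\bmu)+1}$-relation, yielding the surjection by the universal property. You are just a bit more explicit about the universal-property reduction, which the paper leaves implicit.
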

\begin{proof}
We have to compare the defining relations only. So let $\alpha \in R^+$, then on both modules we have
\[
(f_\alpha \otimes 1)^{\lambda(h_\alpha) + 1}.\mathbbm{1} = 0
\]
and also the highest weight is in both cases $\lambda$. Let $M_1 = \min\{ \mu_1(h_\alpha), \lambda - \mu_1(h_\alpha) \}$ and  $M_2 = \min\{ \lambda_1(h_\alpha), \lambda - \lambda_1(h_\alpha) \}$, then by assumption $M_1 \geq M_2$. By the defining relations of $F_{\lambda_1, \lambda - \lambda_1}$ we have
\[
(f \otimes t)^{M_2 + 1}. \mathbbm{1} = 0 \in F_{\lambda_1, \lambda - \lambda_1}
\]
so especially
\[
(f \otimes t)^{M_1 + 1}. \mathbbm{1} = 0 \in F_{\lambda_1, \lambda - \lambda_1}.
\]
This implies the lemma.
\end{proof}

\subsection{}

We turn to the unique maximal element in $P(\lambda,2)$, $\blambda^{\max} =  (\lambda_1^{\max}, \lambda_2^{\max})$. In fact we want to identify $F_{ \lambda_1^{\max}, \lambda_2^{\max}}$ as the unique graded local Weyl module of $\lie{sl}_n \otimes \bc[t]/(t^2)$ of highest weight $\lambda$. For this we recall the definition of a local Weyl module briefly in the following.\\
Let $A$ be a commutative, finitely generated unital algebra over $\bc$. Then $\lie{sl}_n \otimes A$ is a Lie algebra with bracket given by
\[
[x \otimes p, y \otimes q]= [x,y] \otimes pq
\]
and it is called the generalized current algebra. We fix $\lambda \in P^+$, this induces an one-dimensional $\lie{h}$-modules, which we denote $\bc_{\lambda}$. Let $\xi: (\lie{n^+} \oplus \lie{h}) \otimes A \longrightarrow \lie{h}$ be a Lie algebra homomorphism. Then we can lift the structure on  $\bc_{\lambda}$ to a $ (\lie{n^+} \oplus \lie{h}) \otimes A$-structure, and let us denote this one-dimensional module $\bc_{\lambda, \xi}$. 
\begin{defn}
The local Weyl module $W_A(\xi, \lambda)$ is unique maximal integrable (as a $\lie{sl}_n$-module) quotient of the $\lie{sl}_n \otimes A$-module 
\[
U(\lie{sl}_n \otimes A) \otimes_{ (\lie{n}^+ \oplus \lie{h} ) \otimes A} \bc_{\lambda, \xi}.
\]
\end{defn}
These modules have been introduced for $A=  \bc[t^{\pm 1}]$ in \cite{CP01} and further generalized in \cite{FL04} and \cite{CFK10} to arbitrary commutative associative algebras over $\bc$. It has been shown in \cite{CFK10} that if $A$ is finitely generated, $W_A(\xi, \lambda)$ is finite-dimensional and further that these modules are parameterized by maximal ideals in a tensor product of symmetric powers of $A$.\\
These modules play an important role in the representation theory of $\lie{sl}_n \otimes A$, the interested reader is here referred to \cite{CFK10}.\\ As they are integrable as $\lie{sl}_n$-modules, there exist a decompositions into finite-dimensional simple $\lie{sl}_n$-modules. 
Unfortunately, these decomposition are known for special cases only. Namely for $A= \bc[t], \bc[t^{\pm 1}]$ they are computed in a series on paper \cite{CP01}, \cite{CL06}, \cite{FoL07}. If $A$ is semi-simple, then the local Weyl module obviously decomposes into a direct sum of local Weyl modules for $\lie{sl}_n \otimes \bc = \lie{sl}_n$, so into a direct sum of simple $\lie{sl}_n$-modules. \\
But outside of these cases, even for the ``smallest`` non-semi-simple algebra $A = \bc[t]/(t^2)$, the $\lie{sl}_n$ decomposition is unknown.\\ 
Let us rewrite  the defining relations for the local Weyl modules for $A =  \bc[t]/(t^K)$. In fact, for each $\lambda \in P^+$ and $K\geq 1$, there exists a unique local Weyl module. This follows since there exists a unique non-trivial map $\lambda \circ \xi$, namely $\xi$ is the evaluation map at $t = 0$, so we denote $\xi$ by $0$.
\begin{defn}\label{localweyl}
Let $\lambda \in P^+$, then the \textit{graded local Weyl module} $W_{\bc[t]/(t^K)}(0,\lambda)$ is generated by $w \neq 0$ with relations
\[ (\lie{n}^+ \oplus \lie{h}) \otimes t.w = 0 \;, \;  h - \lambda(h). w = 0  \; , \; \lie n^+.w = 0 \; , \; (f_{\alpha} \otimes 1)^{\lambda(h_\alpha) + 1}.w = 0 .\]
\end{defn}
Since the relations are homogeneous, we see that $W_{\bc[t]/(t^K)}(0,\lambda)$ is a graded $\lie{sl}_n \otimes \bc[t]/(t^K)$-module. Even more, we have immediately from the defining relations 
\begin{prop}\label{f-weyl}
Let $\lambda_1 + \lambda _2 = \lambda \in P^+$ and $K \geq 2$, then there exists a surjective map of  $\lie{sl}_n \otimes \bc[t]$-modules
\[
W_{\bc[t]/(t^K)}(0,\lambda) \twoheadrightarrow F_{\lambda_1, \lambda_2}.
\]

\end{prop}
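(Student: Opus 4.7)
The proposition is essentially a comparison of defining relations, so my plan is to exhibit $F_{\lambda_1,\lambda_2}$ as a module satisfying every defining relation of $W_{\bc[t]/(t^K)}(0,\lambda)$ and then invoke the universal property.

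First I would spell out both sets of generators and relations. The module $W_{\bc[t]/(t^K)}(0,\lambda)$ is generated by a vector $w$ killed by $(\lie{n}^+\oplus\lie{h})\otimes t\bc[t]/(t^K)$, $\lie{n}^+\otimes 1$, and $(f_\alpha\otimes 1)^{\lambda(h_\alpha)+1}$, on which $h\otimes 1$ acts by $\lambda(h)$, and which moreover carries an action of $\lie{sl}_n\otimes\bc[t]/(t^K)$, i.e.\ $\lie{sl}_n\otimes t^K\bc[t]$ annihilates $w$. By Proposition~\ref{relation}, the generator $\mathbbm{1}\in F_{\lambda_1,\lambda_2}$ satisfies $\lie{n}^+\otimes\bc[t].\mathbbm{1}=0$, $\lie{h}\otimes t\bc[t].\mathbbm{1}=0$, $\lie{n}^-\otimes t^2\bc[t].\mathbbm{1}=0$, $(h\otimes 1-\lambda(h)).\mathbbm{1}=0$, and $(f_\alpha\otimes 1)^{\lambda(h_\alpha)+1}.\mathbbm{1}=0$. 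The inclusions $\lie{n}^+\otimes t\bc[t]\subset\lie{n}^+\otimes\bc[t]$ and $\lie{h}\otimes t\bc[t]=\lie{h}\otimes t\bc[t]$ immediately give the first family of Weyl-module relations, while the $\lie{h}$- and $\lie{n}^+$-invariance together with the integrability relation are the remaining ones.

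Next I would verify the truncation. By Proposition~\ref{firstprop}(5) the module $F_{\lambda_1,\lambda_2}$ factors through $\lie{sl}_n\otimes\bc[t]/(t^2)$. Since for $K\geq 2$ we have $(t^K)\subseteq(t^2)$, there is a natural surjection $\bc[t]/(t^K)\twoheadrightarrow\bc[t]/(t^2)$, so pulling back makes $F_{\lambda_1,\lambda_2}$ into a $\lie{sl}_n\otimes\bc[t]/(t^K)$-module. Equivalently, $\lie{sl}_n\otimes t^K\bc[t]$ annihilates $\mathbbm{1}$, which is exactly the last piece of data needed.

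Putting this together, $F_{\lambda_1,\lambda_2}$ is an $\lie{sl}_n\otimes\bc[t]/(t^K)$-module generated by $\mathbbm{1}$ satisfying every defining relation of $W_{\bc[t]/(t^K)}(0,\lambda)$. By the universal property of the latter, the assignment $w\mapsto\mathbbm{1}$ extends to a well-defined $\lie{sl}_n\otimes\bc[t]$-module homomorphism $W_{\bc[t]/(t^K)}(0,\lambda)\to F_{\lambda_1,\lambda_2}$, and surjectivity is automatic since $\mathbbm{1}$ generates $F_{\lambda_1,\lambda_2}$. There is no real obstacle here; the only subtlety worth double-checking is the direction of the projection $\bc[t]/(t^K)\twoheadrightarrow\bc[t]/(t^2)$, which is valid precisely under the hypothesis $K\geq 2$.
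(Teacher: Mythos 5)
Your proof is correct and matches the paper's intent, which simply asserts the result as immediate from the defining relations (Definition~\ref{localweyl} versus Proposition~\ref{relation}); you have just made the relation-by-relation comparison explicit, including the point that $F_{\lambda_1,\lambda_2}$ factors through $\lie{sl}_n\otimes\bc[t]/(t^2)$ and hence, since $K\geq 2$, through $\lie{sl}_n\otimes\bc[t]/(t^K)$.
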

In fact $ F_{\lambda_1, \lambda_2}$ is the quotient obtained by factorizing the $U( \lie{sl}_n \otimes \bc[t])$-submodule generated by 
\[
\{ (f_\alpha \otimes t)^{\min\{ \lambda_1(h_\alpha), \lambda_2(h_\alpha) \} +1 }.\mathbbm{1} \, | \, \alpha \in R^+ \} \cup \{ f_\alpha \otimes t^{\ell} \, | \, \ell \geq 2, \alpha \in R^+ \}.
\]

\subsection{}
In this subsection we are restricting ourselves to the case of the second truncated current algebra, and we denote $A = \bc[t]/(t^2)$. We will prove
\begin{lem}\label{weyl-f} Let $\lambda \in P^+$ and $\blambda^{\max} =  (\lambda_1^{\max}, \lambda_2^{\max})$ be the unique maximal element in $P(\lambda, 2)$. Then we have an isomorphism of $\lie{sl}_n \otimes A$-modules (and by extending an isomorphism of $\lie{sl}_n \otimes \bc[t]$-modules):
\[
 W_A(0,\lambda) \cong F_{ \lambda_1^{\max}, \lambda_2^{\max}}.
\]
\end{lem}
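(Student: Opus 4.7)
The plan is to show that every defining relation of $F_{\lambda_1^{\max}, \lambda_2^{\max}}$ already holds in $W_A(0,\lambda)$. Proposition~\ref{f-weyl} provides the canonical surjection $W_A(0,\lambda) \twoheadrightarrow F_{\lambda_1^{\max}, \lambda_2^{\max}}$. Comparing Definition~\ref{localweyl} with Proposition~\ref{relation}, and using $t^2 = 0$ in $A = \bc[t]/(t^2)$, the only relation of $F_{\lambda_1^{\max}, \lambda_2^{\max}}$ not already present in the presentation of $W_A(0,\lambda)$ is
\[
(f_\alpha \otimes t)^{m_\alpha + 1}.w = 0, \qquad m_\alpha := \min\{\lambda_1^{\max}(h_\alpha), \lambda_2^{\max}(h_\alpha)\}, \qquad \alpha \in R^+.
\]
Once this relation is verified in $W_A(0,\lambda)$, the universal property of $F_{\lambda_1^{\max}, \lambda_2^{\max}}$ (as the maximal integrable quotient with these relations) yields a surjection $F_{\lambda_1^{\max}, \lambda_2^{\max}} \twoheadrightarrow W_A(0, \lambda)$ in the other direction, so both maps are isomorphisms.

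The first step is the combinatorial identity $m_\alpha = \lfloor \lambda(h_\alpha)/2 \rfloor$ for every $\alpha \in R^+$. The inequality $m_\alpha \leq \lfloor \lambda(h_\alpha)/2 \rfloor$ holds for any partition of $\lambda$, since the smaller of two non-negative integers summing to $\lambda(h_\alpha)$ is at most half their sum. Equality at the maximal element is read off the explicit formula for $\blambda^{\max}$: writing $\lambda_1^{\max} = \sum a_i \omega_i$, $\lambda_2^{\max} = \sum b_i \omega_i$, one has $a_i = b_i$ when $m_i$ is even while $a_{i_j} - b_{i_j} = (-1)^j$ along the ordered odd-$m$ indices $i_1 < i_2 < \ldots < i_k$. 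For $\alpha = \alpha_{p,q}$, only the odd $m_k$'s with $p \leq k \leq q$ contribute to $\lambda_1^{\max}(h_\alpha) - \lambda_2^{\max}(h_\alpha)$, and these form a contiguous block of alternating $\pm 1$'s whose sum lies in $\{-1,0,1\}$. Hence $|\lambda_1^{\max}(h_\alpha) - \lambda_2^{\max}(h_\alpha)| \leq 1$ and $m_\alpha = \lfloor \lambda(h_\alpha)/2 \rfloor$.

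The second step is an $\lie{sl}_2$-reduction. Fix $\alpha \in R^+$ and set $N = \lambda(h_\alpha)$. Because $[h_\alpha \otimes t,\, f_\alpha \otimes t] = -2(f_\alpha \otimes t^2) = 0$ in $A$, the elements $h_\alpha \otimes t$ and $f_\alpha \otimes t$ commute. Combining this with the defining relations $(h_\alpha \otimes t).w = 0$ and $(e_\alpha \otimes 1).w = 0$ of $W_A(0,\lambda)$, a direct Leibniz-rule computation gives, for every $k \geq 1$,
\[
(e_\alpha \otimes 1)(f_\alpha \otimes t)^k.w = k\,(f_\alpha \otimes t)^{k-1}(h_\alpha \otimes t).w = 0.
\]
Hence $(f_\alpha \otimes t)^k.w$ is a highest weight vector for the $\lie{sl}_2$-triple $\{e_\alpha, f_\alpha, h_\alpha\}$ of weight $N - 2k$. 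Since $W_A(0,\lambda)$ is integrable as an $\lie{sl}_n$-module and a fortiori as an $\lie{sl}_2$-module, any highest weight vector of negative weight vanishes; choosing $k = \lfloor N/2 \rfloor + 1 = m_\alpha + 1$ furnishes the missing relation.

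I expect the combinatorial identity $m_\alpha = \lfloor \lambda(h_\alpha)/2 \rfloor$ to be the main bookkeeping hurdle, though once the alternating-sign pattern in the definition of $\blambda^{\max}$ is tracked the verification is short. The $\lie{sl}_2$-integrability argument is essentially formal. The extension of the isomorphism from $\lie{sl}_n \otimes A$-modules to $\lie{sl}_n \otimes \bc[t]$-modules is automatic, since both modules factor through the quotient $\lie{sl}_n \otimes \bc[t] \twoheadrightarrow \lie{sl}_n \otimes A$.
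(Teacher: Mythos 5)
Your proof is correct and follows the same overall strategy as the paper: take the surjection $W_A(0,\lambda)\twoheadrightarrow F_{\lambda_1^{\max},\lambda_2^{\max}}$ from Proposition~\ref{f-weyl}, observe that the only extra relation needed in $W_A(0,\lambda)$ is $(f_\alpha\otimes t)^{\lfloor\lambda(h_\alpha)/2\rfloor+1}.w=0$, and establish this by reducing to the $\lie{sl}_2$-triple attached to $\alpha$. Your combinatorial verification that $m_\alpha=\lfloor\lambda(h_\alpha)/2\rfloor$ is correct (the paper states this without proof), and the parity remark --- that $|\lambda_1^{\max}(h_\alpha)-\lambda_2^{\max}(h_\alpha)|\le 1$ forces equality because the difference must have the same parity as $\lambda(h_\alpha)$ --- is exactly what is needed.

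Where you differ from the paper is in how the $\lie{sl}_2$ vanishing is obtained. The paper restricts to the cyclic $\lie{sl}(\alpha)\otimes A$-submodule, which is a quotient of the $\lie{sl}_2$ local Weyl module, and then argues that the degree-$L$ graded component of that $\lie{sl}_2$-module is a finite-dimensional $\lie{sl}_2$-module whose weights all lie below $m-2L$; for $L>\lfloor m/2\rfloor$ these are all negative, so the whole graded piece is zero. Your argument is more direct: using $[h_\alpha\otimes t,f_\alpha\otimes t]=-2f_\alpha\otimes t^2=0$ in $A$, a single Leibniz computation shows $(e_\alpha\otimes 1)(f_\alpha\otimes t)^k.w=k(f_\alpha\otimes t)^{k-1}(h_\alpha\otimes t).w=0$, so the vector in question is itself an $\lie{sl}_2$-highest weight vector of weight $N-2k$, which vanishes by integrability once $k>\lfloor N/2\rfloor$. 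This pinpoints the specific vector rather than killing the whole graded component, and uses only integrability rather than the finite-dimensionality of $W_A(0,\lambda)$ as an input. The two routes are equivalent in content, but yours is a bit leaner at the $\lie{sl}_2$ level.
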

\begin{proof} 
We consider the $\lie{sl}_2$-case first. Then $\lambda = m \omega$ and because $e, e \otimes t, h \otimes t$ are acting trivial on $\mathbbm{1}$,
\[
W_A(0,\lambda) = \operatorname{span} \{ f^K (f \otimes t)^L .w \,| ,\ K, L \geq 0 \}
\]
So if we restrict to elements in degree $L$ (recall, that $W_A(0,\lambda)$ is graded by the degree of $t$), then this is spanned by
\[
\{  f^K (f \otimes t)^L.w \, | \, K \geq 0 \}
\]
The weights in degree $L$ are therefore of the form $m - 2L - 2K$ with $K \geq 0$. Every graded component is a finite-dimensional $\lie{sl}_2$-module, since $\lie{sl}_2$ acts by degree $0$ and $W_A(0,\lambda)$ is finite-dimensional. This implies that the component of degree $L$ in $W(0,\lambda)$ is $0$ if there is no vector of dominant weight in degree $L$. So for $L > \lfloor m/2 \rfloor$ we have $(f  \otimes t)^L.w = 0$. \\
On the other hand, $\blambda = ( \lfloor m/2 \rfloor, \lceil m/2 \rceil )$ which implies that  $ F_{\lambda_1^{\max}, \lambda_2^{\max} }$ is the quotient by the submodule generated by 
\[
(f \otimes t)^L.\mathbbm{1}  \text{ with } L > \lfloor m/2 \rfloor.
\]
This implies that $W_A(0,\lambda) \cong F_{\lambda_1^{\max}, \lambda_2^{\max} }$.\\
Let us turn to the general case. We have 
\[
\min \{ \lambda_1^{\max}(h_\alpha), \lambda_2^{\max}(h_\alpha) \}  = \lfloor \lambda(h_\alpha)/2 \rfloor.
\]  
It is enough to show that $(f_\alpha \otimes t)^{ \lfloor \lambda(h_\alpha)/2 \rfloor + 1}.\mathbbm{1} = 0 \in W_A(0, \lambda)$ for all $\alpha$. \\
Fix $\alpha > 0$ and consider the Lie subalgebra $\lie{sl}(\alpha) \otimes A =  \langle e_\alpha, h_\alpha, f_\alpha, e_\alpha \otimes t, h_\alpha \otimes t, f_\alpha \otimes t \rangle$ which is isomorphic to $\lie{sl}_2 \otimes A$.\\
We consider the submodule $M = U(\lie{sl}(\alpha) \otimes A).\mathbbm{1} \subseteq W_A(0,\lambda)$. Then this is a quotient of the $\lie{sl}_2 \otimes A$ local Weyl module $W_A(0, \lambda(h_\alpha) \omega)$ (since the defining relations are satisfied on the highest weight vector).\\
The considerations above for the $\lie{sl}_2$-case imply now that 
\[
(f_\alpha \otimes t)^{\lfloor \lambda(h_\alpha)/2 \rfloor + 1}.\mathbbm{1} = 0  \in M \subseteq W_A(0,\lambda)
\]
Which implies that $W_A(0, \lambda)$ is a quotient of $F_{\lambda_1^{\max}, \lambda_2^{\max}}$ and hence they are isomorphic.
\end{proof}

\bibliographystyle{alpha}
\bibliography{biblist-fusion}

\end{document}